\newcommand\hathat[1]{%
\savestack{\tmpbox}{\stretchto{%
  \scaleto{%
    \scalerel*[\widthof{\ensuremath{#1}}]{\kern-.6pt\bigwedge\kern-.6pt}%
    {\rule[-\textheight/2]{1ex}{\textheight}}
  }{\textheight}%
}{0.5ex}}%
\stackon[1pt]{#1}{\tmpbox}%
}
\pgfplotsset{compat=newest}
\DeclareFontFamily{U}{mathx}{}
\DeclareFontShape{U}{mathx}{m}{n}{<-> mathx10}{}
\DeclareSymbolFont{mathx}{U}{mathx}{m}{n}
\DeclareMathAccent{\widehat}{0}{mathx}{"70}
\DeclareMathAccent{\widecheck}{0}{mathx}{"71}
\theoremstyle{plain}
\newtheorem{proposition}{Proposition}
\newtheorem{lemma}[proposition]{Lemma}
\newtheorem{theorem}[proposition]{Theorem}
\newtheorem{corollary}[proposition]{Corollary}
\newtheorem{definition}[proposition]{Definition}
\theoremstyle{definition}
\theoremstyle{remark}
\newtheorem{openproblem}[proposition]{Open problem}
\newtheorem*{Claim}{Claim}
\theoremstyle{definition}
\newtheorem{example}[proposition]{Example}
\newcommand{\abs}[1]{\mathopen\lvert#1\mathclose\rvert}
\newcommand{\biggabs}[1]{\biggl\lvert#1\biggr\rvert}
\newcommand{\norm}[1]{\mathopen\lVert#1\mathclose\rVert}
\newcommand{\N}{{\mathbb N}}
\newcommand{\R}{{\mathbb R}}
\newcommand{\CC}{{\mathbb C}}
\newcommand{\dif}{\,\mathrm{d}}
\newcommand{\difq}{\mathrm{d}} 
\DeclareMathOperator{\Div}{div}
\newcommand{\Fourier}{\mathcal{F}}
\newcommand{\FourierF}[1]{[\mathcal{F}#1]}
\newcommand{\DistribF}[1]{[#1]}
\newcommand{\DistribS}[1]{S_{#1}}
\newcommand{\QDeriv}{\mathcal{G}}
\newcommand{\QPartial}[1]{\mathcal{G}_{#1}}
\newcommand{\Average}[1]{\overline{#1}}
\newcommand{\loc}{_\mathrm{loc}}
\newcommand{\e}{\mathrm{e}}
\begin{document}

\title{Recovering functions via doubly homogeneous nonlocal gradients}

\author{Stefano Buccheri}
\address{
Stefano Buccheri\hfill\break\indent
Università degli Studi di Napoli Federico II \hfill\break\indent
Dipartimento di Matematica e Applicazioni R. Caccioppoli\hfill\break\indent
Via Vicinale dell'Infermeria, 58\hfill\break\indent
80125 Napoli, Italy}

\author{Augusto C. Ponce}
\address{
Augusto C. Ponce\hfill\break\indent
 Université catholique de Louvain\hfill\break\indent
 Institut de Recherche en Mathématique et Physique\hfill\break\indent
 Chemin du cyclotron 2, L7.01.02\hfill\break\indent
1348 Louvain-la-Neuve, Belgium}


\dedicatory{
To the memory of Haim Brezis, with deep admiration and gratitude.}

\subjclass[2020]{primary: 26A33, 47G20; secondary: 42A38, 47A67}
\keywords{Riesz fractional gradient, nonlocal gradient, representation formula, Sobolev inequality, Fourier transform, Brezis-type problems}

\begin{abstract}
We investigate a class of nonlocal gradients featuring distinct homogeneities at zero and infinity. 
We establish a representation formula for such doubly homogeneous operators and derive associated Sobolev-type inequalities.
We also propose open questions linked to our results, suggesting directions for future research inspired by the work of Haim Brezis.
\end{abstract}

\maketitle

\section{Introduction}
We study a class of \emph{nonlocal gradients} whose canonical example associated to a function $u \in C^\infty_c(\mathbb{R}^d)$ in any dimension $d \geq 1$ is the Riesz fractional gradient of order $0 < s < 1$\,:
\begin{equation*}
\nabla^s u(x) = \int_{\mathbb{R}^d} (u(x) - u(y)) \frac{x-y}{|x-y|^{d+s+1}} \dif y,
\end{equation*}
which reduces to the Riesz transform for $s = 0$.
When \(s = 1\) one should not expect to obtain the classical gradient since as observed by Bourgain, Brezis and Mironescu~\cite{BBM-2001} one has
\[
\int_{\mathbb{R}^d}\int_{\mathbb{R}^d} \frac{\abs{u(x) - u(y)}}{|x-y|^{d+1}} \dif x \dif y = \infty,
\]
unless \(u\) is a constant. This is due to a defect in the definition of \(\nabla^s\) that does not take into consideration its behavior as \(s \to 1\).
In fact, the \emph{correct} operator that allows one to recover \(\nabla\) should be instead \((1-s)\nabla^s\), as one verifies that
\begin{equation}
\label{eq-318}
(1 - s)\nabla^s u \to \frac{\sigma_d}{d} \nabla u
\quad \text{when \(s \to 1\),}
\end{equation}
both uniformly and in \(L^1(\R^d)\), where \(\sigma_d\) denotes the area of the unit sphere \(\partial B_1\) in \(\R^d\).

The fractional gradient $\nabla^s$ introduced in \cite{H} within singular integral theory has attracted significant attention in both applied and theoretical mathematics. For example, in nonlocal continuum mechanics \cite{MMW} and image processing \cite{Gilboa}, see also \cite{Sil} for further references, but also in theoretical developments including Bourgain-Brezis-Mironescu-type results \cite{MS}, variational nonlocal elliptic operators \cites{SS,SSbis}, and distributional fractional calculus \cites{CS,BCCS,Sil}.

Generalizations of $\nabla^s$ have also emerged in elasticity and peridynamics \cites{BCMC,BMCS,ACFS}, where the singular kernel is modified to define broader classes of nonlocal gradients.
In this paper, we advance this direction by focusing on \emph{doubly homogeneous gradients} --- a flexible class of nonlocal operators --- and establishing results for their associated function spaces and inversion theory.

The nonlocal gradients we consider are defined as follows:

\begin{definition}
\label{def:G-gradient}
Let \( g \in C^\infty(\mathbb{R}^d \setminus \{0\}) \) satisfy the integrability condition
\begin{equation}
\label{eq:gradient_condition}
\int_{B_1} |x| |\nabla g(x)| \dif x 
+ \int_{\mathbb{R}^d \setminus B_1} |\nabla g(x)| \dif x < \infty.
\end{equation}
The \emph{$\mathcal{G}$-fractional gradient} of a function \( u \in C_c^\infty(\mathbb{R}^d) \) is defined pointwise as
\begin{equation}
\label{eqFractionalGradient}
\mathcal{G}u(x) = -\int_{\mathbb{R}^d} (u(x) - u(y)) \nabla g(x - y) \dif y.
\end{equation}
\end{definition}

The choice \( g(x) = |x|^{-(d - 1 + s)} \) with \( 0 < s < 1 \) recovers, up to a multiplicative constant, the Riesz fractional gradient \( \nabla^s u \). For this case, there exists a vector field \( V_s(x) = A x/|x|^{d - s + 1} \), for some constant \(A\) depending on \(s\) and \(d\), such that one has the following inversion formula~ \cite{Ponce}*{Proposition~15.8}:
\begin{equation}
\label{eq:fundamental_theorem}
u = V_s * \nabla^s u
\quad \text{in \(\R^d\).}
\end{equation}
This serves as a fractional counterpart to the fundamental theorem of calculus.

The identity \eqref{eq:fundamental_theorem} demonstrates that smooth, compactly supported functions \( u \) can be fully recovered from their Riesz fractional gradients. Such representation formulas are powerful tools for proving embedding theorems via convolution estimates. For instance, Sobolev- and Hardy-type inequalities for \( \|\nabla^s u\|_{L^p} \) with \( p > 1 \) were established by Shieh and  Spector in \cite{SS}.
The endpoint case \( p = 1 \) presents additional subtleties: as shown by Schikorra, Spector and Van~Schaftingen \cite{SSVS}, the inequality
    \begin{equation}
    \label{eq:L1_inequality}
    \|u\|_{L^{\frac{d}{d-s}}(\mathbb{R}^d)} \leq C \|\nabla^s u\|_{L^1(\mathbb{R}^d)}
    \end{equation}
    holds for \( d \geq 2 \), with a proof that relies on duality and the curl-free property of the fractional gradient.
    
    In Section~\ref{sectionNonlocalSobolevinequalities} we provide a streamlined proof of \eqref{eq:L1_inequality} that follows the strategy of \cite{SSVS}, based on an idea from \cite{VS}, while eliminating the need for \(L^p\)-bounds on the Riesz transform.
    A careful inspection of that proof gives a constant in \eqref{eq:L1_inequality} that is compatible with the limit behavior of \((1-s)\nabla^s\) as \(s \to 1\) given by \eqref{eq-318}.
    More precisely,

    \begin{theorem}
    \label{propositionEstimateRieszFractionalGradient}
    Let \(d \ge 2\).
    There exists a constant \(\widetilde{C} > 0\) such that, for every \(u \in C_c^\infty(\R^d)\) and every \(1/2 \le s < 1\),
    \begin{equation}
    \label{eq:L1_inequality-Bis}
    \|u\|_{L^{\frac{d}{d-s}}(\mathbb{R}^d)} \leq \widetilde{C} (1-s) \|\nabla^s u\|_{L^1(\mathbb{R}^d)}.
    \end{equation}
    \end{theorem}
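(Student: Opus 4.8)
The plan is to establish \eqref{eq:L1_inequality} in the streamlined form announced above, following the strategy of \cite{SSVS} and the idea of \cite{VS}, while keeping explicit track of how the resulting constant depends on $s$. That constant turns out to be a purely dimensional multiple of a homogeneous quasi-norm of the inversion kernel $V_s$ from \eqref{eq:fundamental_theorem}; since $V_s$ degenerates like $1-s$ as $s\to 1$, in accordance with \eqref{eq-318}, restricting to $1/2\le s<1$ produces the uniform bound \eqref{eq:L1_inequality-Bis}.

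\emph{Step 1 (size of the inversion kernel).} First I would record the identity $\nabla^s u=c(d,s)\,\nabla I_{1-s}u$, with
\begin{equation*}
c(d,s)=\int_{\R^d}\frac{w_1\sin w_1}{\abs{w}^{d+s+1}}\dif w>0,
\end{equation*}
so that, passing to Fourier transforms, $V_s(x)=-\dfrac{\kappa(d,s)}{c(d,s)}\,\dfrac{x}{\abs{x}^{d-s+1}}$, where $\kappa(d,s)$ --- fixed by $\mathcal{F}^{-1}\bigl[\,i\xi/\abs{\xi}^{s+1}\,\bigr](x)=\kappa(d,s)\,x/\abs{x}^{d-s+1}$ --- is given by an explicit combination of Gamma functions that is continuous on $[1/2,1)$ and has a finite limit as $s\to 1$, hence is bounded. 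Expanding $\sin w_1$ near the origin shows that the integrand defining $c(d,s)$ behaves there like $\abs{w}^{-(d+s-1)}$, so that $(1-s)\,c(d,s)\to\sigma_d/d$ as $s\to 1$, consistently with \eqref{eq-318}; since $(1-s)\,c(d,s)$ is continuous and strictly positive on $[1/2,1)$ with a strictly positive limit at $s=1$, it is bounded below there, whence
\begin{equation*}
\abs{V_s(x)}\le C_1(d)\,(1-s)\,\abs{x}^{-(d-s)},\qquad x\ne 0,\ \tfrac12\le s<1.
\end{equation*}

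\emph{Step 2 (weak-type bound).} Combining $u=V_s*\nabla^s u$ with the previous estimate gives the pointwise domination $\abs{u}\le C_1(d)\,(1-s)\,\bigl(\abs{\cdot}^{-(d-s)}*\abs{\nabla^s u}\bigr)$. As $\bignorm{\abs{\cdot}^{-(d-s)}}_{L^{d/(d-s),\infty}(\R^d)}=(\sigma_d/d)^{(d-s)/d}$ stays bounded for $1/2\le s<1$ and the exponent $d/(d-s)$ remains in a fixed compact subset of $(1,\infty)$, the weak Young inequality yields, with $C_2(d)$ independent of $s$,
\begin{equation*}
\norm{u}_{L^{d/(d-s),\infty}(\R^d)}\le C_2(d)\,(1-s)\,\norm{\nabla^s u}_{L^1(\R^d)}.
\end{equation*}

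\emph{Step 3 (passage to the strong norm).} Upgrading the weak norm to the strong one requires the algebraic structure of $\nabla^s u$, and is the main obstacle. I would argue by duality, $\norm{u}_{L^{d/(d-s)}}=\sup\bigl\{\langle u,\phi\rangle:\phi\in C_c^\infty(\R^d),\ \norm{\phi}_{L^{d/s}}\le 1\bigr\}$, and transfer the kernel onto the test function, using that $V_s$ is odd:
\begin{equation*}
\langle u,\phi\rangle=-\langle\nabla^s u,\,V_s*\phi\rangle,\qquad V_s*\phi=\nabla\Phi_s,\quad \Phi_s=-\tfrac{1}{c(d,s)}\,I_{s+1}\phi.
\end{equation*}
Thus $V_s*\phi$ is a curl-free field in $\mathrm{BMO}(\R^d;\R^d)$, and the endpoint Sobolev inequality --- applied without any $L^p$-bound on Riesz transforms, which is precisely the streamlining of \cite{SSVS} --- gives $\norm{V_s*\phi}_{\mathrm{BMO}}\le C_3(d)\,(1-s)\,\norm{\phi}_{L^{d/s}}$, the factor $1-s$ coming once more from $1/c(d,s)$. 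Since $\nabla^s u$ is itself curl-free (its Fourier symbol is a scalar multiple of $i\xi$) and has zero mean, I would then invoke the argument of \cite{VS}: a dyadic decomposition of $\nabla\Phi_s$, the replacement of $\nabla\Phi_s$ by its oscillation on each shell (legitimate because $\nabla^s u$ annihilates constants in the pairing), and a Poincaré inequality on each shell combined with the gradient structure of $\nabla^s u$ together recover effective $L^\infty$-type control at every scale and yield
\begin{equation*}
\abs{\langle\nabla^s u,\,\nabla\Phi_s\rangle}\le C_4(d)\,\norm{\nabla^s u}_{L^1}\,\norm{\nabla\Phi_s}_{\mathrm{BMO}}.
\end{equation*}
Chaining the estimates gives $\abs{\langle u,\phi\rangle}\le C_3(d)\,C_4(d)\,(1-s)\,\norm{\nabla^s u}_{L^1}\,\norm{\phi}_{L^{d/s}}$, and the supremum over $\phi$ is \eqref{eq:L1_inequality-Bis} with $\widetilde C=C_3(d)\,C_4(d)$.

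The delicate point, throughout Step~3, is to arrange the argument so that \emph{every} appearance of $s$ is accounted for by the single explicit factor $1/c(d,s)$: one must check that the constant produced by the dyadic decomposition is purely dimensional and does not degenerate as $s\to 1$, and that the $\mathrm{BMO}$-embedding constant and $\kappa(d,s)$ remain bounded on $[1/2,1)$. As all these quantities are continuous in $s$ and the only possible blow-up --- that of $c(d,s)$ as $s\to 1$ --- has been isolated, the verification is routine.
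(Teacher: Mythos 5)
Your Step~1 is correct and parallels the paper's: both extract the factor $1-s$ from the explicit constant in $V_s = A\,x/\abs{x}^{d-s+1}$ (the paper writes $A = K(s,d)(1-s)$ with $K$ a ratio of Gamma functions bounded for $1/2 \le s < 1$), and Step~2's weak-type bound, while not in the paper, is harmless. The genuine gap is in Step~3. The paper's proof of the strong estimate rests on Proposition~\ref{propositionFractionalSobolev}, which is established by the Van~Schaftingen slicing argument: fix the last coordinate, split $\varphi$ into $\rho_\epsilon*\varphi$ and $\varphi - \rho_\epsilon*\varphi$, use the curl-free relation $\partial_d F_1 = \partial_1 F_d$ together with the one-dimensional fundamental theorem of calculus to transfer derivatives onto the mollifier, control everything by the one-dimensional maximal function of the slice norm $\Phi(x_d)$, and optimize in $\epsilon$. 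Your Step~3 replaces this with a claimed pairing estimate
\[
\abs{\langle \nabla^s u,\, \nabla\Phi_s\rangle}
\le C_4(d)\,\norm{\nabla^s u}_{L^1}\,\norm{\nabla\Phi_s}_{\mathrm{BMO}},
\]
attributed to a dyadic/oscillation/Poincaré argument from \cite{VS}. This is not what \cite{VS} proves: the inequality there (and the one the paper streamlines) is the $L^1 \times \dot W^{1,d}$ duality for curl-free or divergence-free fields, and $\dot W^{1,d}\hookrightarrow\mathrm{BMO}$ is a strict embedding, so the $L^1\times\mathrm{BMO}$ bound is a genuinely stronger statement that is not established there. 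Moreover, in your pairing \emph{both} $\nabla^s u$ and $\nabla\Phi_s$ are curl-free (they are gradients), so no div-curl compensation is available to pass from $L^1$ to the Hardy-space predual of $\mathrm{BMO}$; a curl-free $L^1$ field does not lie in $H^1$ in general, and $L^1$ does not pair with $\mathrm{BMO}$. The "dyadic decomposition plus Poincaré on each shell" sketch is not the argument of \cite{VS} and does not close this. To repair Step~3 you should either carry out the actual slicing estimate of Proposition~\ref{propositionFractionalSobolev} with $F = \nabla^s u$ and $v = V_s/A$ — which is exactly what the paper does, and which makes the constant depend only on $d/s$ and hence stay bounded on $[1/2,1)$ — or invoke the original SSVS argument with Riesz transform bounds, forfeiting the streamlining but still obtaining a constant that is uniform on $[1/2,1)$.
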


    As a consequence, one recovers the classical Sobolev inequality as \(s \to 1\) in the spirit of analogous results of Bourgain, Brezis and Mironescu~\cites{BBM-2002, BBM-2004} concerning the behavior of the constant with respect to the fractional parameter.
    Note that 
    \[
    \|\nabla^s u\|_{L^1(\mathbb{R}^d)}
    \le \int_{\R^d}\int_{\R^d} \frac{\abs{u(x) - u(y)}}{\abs{x - y}^{d + s}} \dif x \dif y,
    \]
    where the right-hand side is the Gagliardo seminorm associated to the fractional Sobolev space \(W^{s, 1}(\R^d)\).
    Hence, the estimate \eqref{eq:L1_inequality-Bis} gives a stronger version of the fractional Sobolev inequality that is stable in the limit as \(s \to 1\) and had been obtained using various methods \cites{Kolyada-Lerner:2005,Almgren-Lieb:1989,Frank-Seiringer:2008,BBM-2002,MSh-2002,PonceSpector:2020}.
    
In light of the representation formula \eqref{eq:fundamental_theorem}, it is therefore natural to investigate what conditions on the nonlocal operator $\mathcal{G}$ permit a representation formula, and what analytical insights such a formula might provide. We focus on kernels $g$ exhibiting \emph{double homogeneity}, with distinct scaling behaviors near the origin and at infinity.  
We do not focus on the cases of dimensions \(d = 1\) and \(2\) to avoid technical details specific to these dimensions.
Three representative examples illustrate our framework:  
\begin{enumerate}
    \item \emph{Localized fractional gradient}: For $0 < r < R$ and $0 < s < 1$,  
\begin{equation}\label{eqGLocal}  
g(x) = \begin{cases}  
1/|x|^{d-1+s} & \text{if } |x| \leq r, \\  
0 & \text{if } |x| \geq R,  
\end{cases}  
\end{equation}  
which restricts the fractional derivative to a bounded region.  
    \item \emph{Integrable tail}: For $\alpha > d$ and $a, b > 0$,  
\begin{equation}\label{eqGIntermediate}  
g(x) = \begin{cases}  
a/|x|^{d-1+s} & \text{if } |x| \leq r, \\  
b/|x|^{\alpha} & \text{if } |x| \geq R,  
\end{cases}  
\end{equation}  
ensuring integrability at infinity while preserving fractional behavior locally.  
    \item \emph{Two-scale fractional kernel}: For $0 < s, t < 1$,  
\begin{equation}\label{eqGTwo}  
g(x) = \begin{cases}  
a/|x|^{d-1+s} & \text{if } |x| \leq r, \\  
b/|x|^{d-1+t} & \text{if } |x| \geq R,  
\end{cases}  
\end{equation}  
interpolating between different fractional regimes.  
\end{enumerate}
We shall also suppose that
\begin{equation}\label{assurepositivityofFourier}
\left\{
\begin{aligned}
& \text{\( g \in C^\infty(\mathbb{R}^d \setminus \{0\}) \) is radial and}\\
& \text{\(\rho \in(0,\infty) \longmapsto \rho^{d-1}g(\rho x)\) 
\ is non-increasing and convex for any $x\neq 0$.}
\end{aligned}
\right.
\end{equation}
These assumptions ensure that the Fourier transform of \(g\) is well-defined and positive, which is a natural request within our approach, see \eqref{introconvolution} and also Sections~\ref{sectionfouriertransformradialhomogeneous} and~\ref{sectionPositive} below. 

For such kernels, we establish a general representation formula:  
\begin{theorem}\label{introrepfor}  
Let $d\ge3$ and suppose that $g$ given by \eqref{eqGLocal}, \eqref{eqGIntermediate}, or \eqref{eqGTwo} satisfies \eqref{assurepositivityofFourier}. 
Then, there exists a smooth vector field $V \colon \mathbb{R}^d \setminus \{0\} \to \mathbb{R}^d$ such that, for every $u \in C_c^\infty(\mathbb{R}^d)$,  
\begin{equation}
\label{eqRepresentationFormula}
u = V * \mathcal{G}u \quad \text{in } \mathbb{R}^d.  
\end{equation}
\end{theorem}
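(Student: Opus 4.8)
The plan is to work on the Fourier side, since both the operator $\mathcal{G}$ and the desired convolution are Fourier multipliers. First I would compute the Fourier symbol of $\mathcal{G}$. Writing $\mathcal{G}u(x) = -(u * \nabla g)(x) + u(x)\int_{\R^d}\nabla g$, but the second term vanishes because $\nabla g$ is odd when $g$ is radial (here I would be careful that the integrals make sense; the integrability hypothesis \eqref{eq:gradient_condition} plus oddness handle this), so $\mathcal{G}u = -u * \nabla g$ at least formally, giving $\widehat{\mathcal{G}u}(\xi) = -\widehat{\nabla g}(\xi)\,\widehat{u}(\xi) = -2\pi i\xi\,\widehat{g}(\xi)\,\widehat{u}(\xi)$ (with the paper's normalization). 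Actually the cleaner route, following the structure suggested by \eqref{introconvolution} and the Riesz case \eqref{eq:fundamental_theorem}, is to note that $g$ is radial, so $\widehat{g}$ is a radial function $m(\abs{\xi})$, and by the convexity/monotonicity hypothesis \eqref{assurepositivityofFourier} together with the results announced in Sections~\ref{sectionfouriertransformradialhomogeneous} and \ref{sectionPositive}, $\widehat{g}$ is a well-defined, strictly positive, continuous function on $\R^d \setminus \{0\}$. Then the natural candidate is the vector field $V$ whose Fourier transform is
\begin{equation*}
\widehat{V}(\xi) = \frac{\xi}{2\pi i \, \abs{\xi}^2 \, \widehat{g}(\xi)} \cdot \frac{\abs{\xi}^2}{\abs{\xi}^2} = \frac{1}{2\pi i}\,\frac{\xi}{\abs{\xi}^2\,\widehat{g}(\xi)},
\end{equation*}
chosen exactly so that $\widehat{V}(\xi)\cdot(-\widehat{\nabla g}(\xi)) = 1$, which formally yields $\widehat{V * \mathcal{G}u} = \widehat{u}$ and hence \eqref{eqRepresentationFormula}.

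The substantive work is then to show that this $V$ is an honest object: a tempered distribution (or better, a function) whose convolution with $\mathcal{G}u$ makes sense and reproduces $u$ for every $u \in C_c^\infty$. The key step is to understand the behavior of $m(\abs\xi) = \widehat{g}(\xi)$ near $\xi = 0$ and near $\xi = \infty$, because $g$ in each of the three examples \eqref{eqGLocal}, \eqref{eqGIntermediate}, \eqref{eqGTwo} is built by gluing a homogeneous piece $\abs{x}^{-(d-1+s)}$ near the origin to a different homogeneous (or identically zero) piece at infinity. Since the Fourier transform exchanges behavior at $0$ and $\infty$, I expect: near $\xi = \infty$, $\widehat{g}(\xi)$ behaves like the Fourier transform of $\abs{x}^{-(d-1+s)}$, namely $\asymp \abs{\xi}^{-(1-s)}$ (up to constants), coming from the local singularity of $g$; near $\xi = 0$, $\widehat{g}(\xi)$ is controlled by the tail of $g$, so it is bounded and continuous (tending to $\int g$ if $g \in L^1$, as in \eqref{eqGIntermediate}, or behaving like $\abs{\xi}^{-(1-t)}$ as in \eqref{eqGTwo}, or like a logarithm / constant in the compactly supported case \eqref{eqGLocal}). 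I would make this rigorous using the splitting $g = g_0 + g_\infty$ with $g_0$ supported near $0$ and agreeing with the homogeneous kernel there, and $g_\infty$ smooth away from $0$; the Fourier transform of the homogeneous kernel is classical (Riesz), and the correction $g - (\text{homogeneous kernel})$ is smoother, contributing lower-order terms. Actually, rather than exact asymptotics, what I really need are two-sided bounds: $\widehat{g}(\xi) \gtrsim \min(1,\abs\xi)^{-(1-s)}$-type lower bounds near infinity and positive lower bounds on compacta, so that $1/\widehat{g}(\xi)$ does not blow up.

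With such bounds in hand, $\widehat{V}(\xi) = \frac{1}{2\pi i}\frac{\xi}{\abs\xi^2 \widehat{g}(\xi)}$ satisfies: near $\infty$, $\abs{\widehat V(\xi)} \lesssim \abs\xi^{-1}\cdot\abs\xi^{-(s-1)} = \abs\xi^{-s}$ (wait --- $1/\widehat g \lesssim \abs\xi^{1-s}$, so $\abs{\widehat V} \lesssim \abs\xi^{-1}\abs\xi^{1-s} = \abs\xi^{-s}$), which is a standard Riesz-potential-type symbol and defines a locally integrable function decaying like $\abs{x}^{-(d-s)}$; near $\xi = 0$, $\abs{\widehat V(\xi)} \lesssim \abs\xi^{-1}$ (worst case, when $\widehat g(0)$ is a positive constant) or milder, so $\widehat V$ is locally integrable near the origin in any dimension $d \ge 2$ (here $d \ge 3$ gives extra room and is presumably why the hypothesis $d\ge3$ is imposed, to also control derivatives of $\widehat V$ and get a genuine smooth vector field away from $0$). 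Then I would argue: (i) $V$ is a well-defined tempered distribution, in fact represented by a function smooth on $\R^d\setminus\{0\}$ with the stated decay, via Mikhlin--Hörmander-type estimates on $\widehat V$ (its derivatives decay appropriately because $\widehat g$ and all its derivatives are controlled — this uses smoothness of $g$ away from $0$); (ii) for $u \in C_c^\infty$, $\mathcal{G}u \in L^1 \cap L^\infty$ with $\widehat{\mathcal{G}u}(\xi) = -\widehat{\nabla g}(\xi)\widehat u(\xi)$ decaying rapidly, so $\widehat V \cdot \widehat{\mathcal{G}u} = \widehat u$ pointwise and the product is in $L^1$; (iii) by Fourier inversion, $V * \mathcal{G}u = u$.

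The main obstacle I anticipate is item (i) together with the sharp lower bounds on $\widehat{g}$: one must leverage the convexity/monotonicity assumption \eqref{assurepositivityofFourier} to guarantee $\widehat g > 0$ globally (positivity is where that hypothesis earns its keep — it is a Pólya-type criterion for positive-definiteness of radial functions), and then quantify how $\widehat g$ degenerates, separately at $0$ and at $\infty$, for each of the three kernel families. A secondary technical point is justifying the manipulations (interchange of integration in computing the symbol of $\mathcal{G}$, the vanishing of the $u(x)\int \nabla g$ term) purely from the integrability hypothesis \eqref{eq:gradient_condition}, and checking that the convolution $V * \mathcal{G}u$ converges absolutely given the decay of $V$ at infinity and the decay/integrability of $\mathcal{G}u$ — this is where I would invoke that $\mathcal{G}u$ inherits good decay from $\nabla g \in L^1(\R^d\setminus B_1)$ and $u$ compactly supported. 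These are the places where the specific structure of the three examples, rather than just \eqref{eq:gradient_condition} and \eqref{assurepositivityofFourier}, will be used.
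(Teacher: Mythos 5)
Your proposal is correct and takes essentially the paper's approach: the paper constructs a scalar $\omega$ with $\widehat{\omega}(\xi)=c_\gamma/(|\xi|^2\widehat{g}(\xi))$ via Proposition~\ref{propositionExistenceVFirst} and then sets $V=-\nabla\omega/(\sigma_d(d-2))$, which on the Fourier side is exactly your candidate $\widehat{V}\propto\xi/(|\xi|^2\widehat{g}(\xi))$, with positivity of $\widehat{g}$ supplied by Proposition~\ref{propositionPositivity} and the two-regime asymptotics of $\widehat{g}$ by Propositions~\ref{propositionFourierNonHomogeneous}--\ref{propositionFourierNonHomogeneousb2k} (the scalar radial $\omega$ is preferred over the vector $V$ so that the radial symbol lemmas \ref{helpinglemma}--\ref{Fcheck} apply cleanly). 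One small slip to fix in a write-up: from Definition~\ref{def:G-gradient} one gets $\mathcal{G}u = \nabla g * u - u\int\nabla g = g*\nabla u$ (Proposition~\ref{propositionRepresentationGradient}), not $-u*\nabla g$, so the symbol of $\mathcal{G}$ is $+2\pi i\xi\,\widehat{g}(\xi)$ rather than its negative; your final candidate for $\widehat V$ is nevertheless the right one because your two sign errors cancel.
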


Under the assumption \eqref{eqGLocal} such a representation formula has been obtained by Bellido, Cueto and Moral-Corral~\cite{BCMC}, see also \cite{BMCS}.

The proof of \eqref{introconvolution} also provides precise estimates on the behavior of $V$ and its derivatives. For the kernels \eqref{eqGLocal} and \eqref{eqGIntermediate}, we obtain for every multi-index $\nu$:
\begin{equation} \label{introV1}
|\partial^{\nu}V(x)| \leq 
\begin{cases}
{C}/{|x|^{d - s + |\nu|}} & \text{if } |x| \leq 1, \\
{C}/{|x|^{d - 1 + |\nu|}} & \text{if } |x| \geq 1.
\end{cases}
\end{equation}
This shows that both compactly supported kernels and those with integrable tails induce identical decay for $V$ at infinity. It may be of some interest to notice that at infinity the same behaviour of the classical local representation formula is recovered. 
In contrast, for the doubly homogeneous kernel \eqref{eqGTwo} the estimates reflect its distinct scaling regimes:
\begin{equation} \label{introV2}
|\partial^{\nu}V(x)| \leq 
\begin{cases}
{C}/{|x|^{d - s + |\nu|}} & \text{if } |x| \leq 1, \\
{C}/{|x|^{d - t + |\nu|}} & \text{if } |x| \geq 1.
\end{cases}
\end{equation}

The core idea to prove Theorem~\ref{introrepfor} is to find $\omega \in C^\infty(\mathbb{R}^d)$ satisfying:
\begin{equation}\label{introconvolution}
\omega * g(z) = \frac{1}{|z|^{d-2}} \quad \text{for all } z \neq 0.
\end{equation}
Most of this paper is devoted to the solution of this convolution equation. 
This approach requires a careful analysis of the Fourier transform of the homogeneous function ${1}/{|x|^{d-\alpha}}$ which does not belong to the usual \(L^1\) or \(L^2\) settings (Sections~\ref{sectioncharacterizationradialhomogeneous} and~\ref{sectionfouriertransformradialhomogeneous}), the computation of the Fourier transform of \(g\) (Section~\ref{sectionfouriertransformdoublyhomogeneous}), and the inversion of the transformed solution (Section~\ref{sectionexistenceofthekernel}). 

Defining
\begin{equation*}
V = -\frac{1}{\sigma_d (d-2)} \nabla \omega
\end{equation*}
and applying Proposition~\ref{propositionRepresentationGradient} then yields:
\[
V * \mathcal{G}u = V * g * \nabla u = \left( \frac{1}{\sigma_d} \frac{z}{|z|^d} \right) * \nabla u = u.
\]

In the proof of Theorem~\ref{introrepfor}, we strongly rely on the explicit formulas of \(g\) near \(0\) and infinity. 
It would be interesting to have a weaker assumption that relies mostly on the behavior of \(g\) in these regions.
A common roof that could collect the assumptions \eqref{eqGLocal} and \eqref{eqGIntermediate} would be the following:

\begin{openproblem}
    Assume that \(g \in C^\infty( \R^d \setminus \{0\} )\) is a radial function such that
    \begin{equation}
    \label{eqAssumption1}
    \lim_{x \to 0}{\abs{x}^{d - 1 + s} g(x)} \in (0, \infty)
    \end{equation}
    and
    \begin{equation}
    \label{eqAssumption2}
    \abs{g(x)} \le C/\abs{x}^\alpha
    \quad \text{for \(\abs{x} \ge R\),}
    \end{equation}
    where \(0 < s < 1\) and \(\alpha > d\). 
    Does there exist a smooth vector field $V \colon \mathbb{R}^d \setminus \{0\} \to \mathbb{R}^d$ such that the representation formula \eqref{eqRepresentationFormula} holds for every $u \in C_c^\infty(\mathbb{R}^d)$\,?
\end{openproblem}

An immediate consequence of \eqref{eqRepresentationFormula} is that if \(u \in C_c^\infty(\R^d)\) is such that \(\mathcal{G}u = 0\) in \(\R^d\), then \(u = 0\).
In the spirit of H.~Brezis' work~\cite{Brezis} that characterizes measurable functions that are constant almost everywhere
based on the condition
\[
\int_{\mathbb{R}^d}\int_{\mathbb{R}^d} \frac{\abs{u(x) - u(y)}}{|x-y|^{d+1}} \dif x \dif y < \infty,
\]
see also \cite{BrezisMironescu}*{Section~6.2} for alternative elementary proofs, one could extend the fractional derivative \(\mathcal{G}u\) and investigate an analogous question to functions that are not necessarily smooth.
Indeed, if \(u \colon \R^d \to \R\) is a measurable function that satisfies
\begin{equation}
\label{eqConditionIntegrabilityG}
\int_{\mathbb{R}^d}\int_{\mathbb{R}^d} \abs{u(x) - u(y)} \abs{\nabla g(x - y)} \dif x \dif y < \infty,
\end{equation}
then, by Fubini's theorem, the formula \eqref{eqFractionalGradient} for \(\mathcal{G}u(x)\) is well-defined for almost every \(x \in \R^d\).

\begin{openproblem}
    Assume that  \(g \in C^\infty( \R^d \setminus \{0\} )\) is radial and satisfies \eqref{eqAssumption1} and \eqref{eqAssumption2}.
    If \(u\) is a measurable function in \(\R^d\) that verifies \eqref{eqConditionIntegrabilityG} and \(\mathcal{G} u = 0\) almost everywhere in \(\R^d\), is \(u\) a constant in \(\R^d\)\,?
\end{openproblem}

Note that for a given function \(g\) the representation formula may fail, but still the fractional gradient could characterize the constant functions. A further challenging question would be:

\begin{openproblem}
Identify functions  \(g \in C^\infty( \R^d \setminus \{0\} )\) for which the condition \(\mathcal{G}u = 0\) characterizes all constant measurable functions \(u\) that satisfy \eqref{eqConditionIntegrabilityG}.    
For example, is this the case if the Fourier transform of \(g\) is well-defined and almost everywhere nonzero?
\end{openproblem}

The next results concern fractional Sobolev inequalities in our doubly homogeneous setting. Clearly, one does not expect to have an estimate for $u$ in a single Lebesgue space, but rather in a suitable space that accommodates the difference between homogeneity rates. The sum of Lebesgue spaces represents a valid alternative to this issue, as we show in the next couple of results.

\begin{theorem}\label{introsobemb1}
        Take $d\ge3$ and $1\le p<d/s$ with $0 < s < 1$.
        Assume that \(g\) is given by \eqref{eqGLocal} or \eqref{eqGIntermediate} and satisfies \eqref{assurepositivityofFourier}. 
        Then, for every \(u \in C_{c}^{\infty}(\R^{d})\),{}
\[{}
\norm{u}_{L^{\frac{pd}{d - sp}} + L^{\frac{pd}{d - p}}}
\le C \norm{\QDeriv{u}}_{L^{p}(\R^d)}.
\]
\end{theorem}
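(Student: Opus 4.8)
The plan is to deduce the inequality from the representation formula $u = V * \QDeriv u$ of Theorem~\ref{introrepfor}, splitting the kernel $V$ according to its two scaling regimes governed by \eqref{introV1}. First I would fix a cutoff $\eta \in C_c^\infty(B_2)$ with $\eta \equiv 1$ on $B_1$ and write $V = V_0 + V_\infty$ with $V_0 := \eta V$ and $V_\infty := (1 - \eta)V$, so that
\[
u = V_0 * \QDeriv u + V_\infty * \QDeriv u .
\]
By \eqref{introV1} one has $\abs{V_0(x)} \le C\abs{x}^{-(d-s)}$ with $\supp V_0 \subseteq B_2$, while $\abs{V_\infty(x)} \le C\abs{x}^{-(d-1)}$ with $\supp V_\infty \subseteq \R^d \setminus B_1$; hence $V_0$ is dominated pointwise by the Riesz kernel $I_s(x) = \abs{x}^{-(d-s)}$ and $V_\infty$ by $I_1(x) = \abs{x}^{-(d-1)}$. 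The aim is then to place $V_0 * \QDeriv u$ in $L^{\frac{pd}{d-sp}}(\R^d)$ and $V_\infty * \QDeriv u$ in $L^{\frac{pd}{d-p}}(\R^d)$, so that $u$ lies in the sum space with the required norm bound by the very definition of a sum of Lebesgue spaces. Note that $\QDeriv u \in L^p(\R^d)$ since $\QDeriv u = g * \nabla u$ with $g \in L^1(\R^d)$ for the kernels \eqref{eqGLocal} and \eqref{eqGIntermediate}.

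For $p > 1$ this is immediate from the Hardy--Littlewood--Sobolev inequality: in the admissible range, convolution with $I_s$ maps $L^p$ into $L^{\frac{pd}{d-sp}}$ and convolution with $I_1$ maps $L^p$ into $L^{\frac{pd}{d-p}}$, so the pointwise domination of $V_0$ and $V_\infty$ yields
\[
\norm{V_0 * \QDeriv u}_{L^{\frac{pd}{d-sp}}} + \norm{V_\infty * \QDeriv u}_{L^{\frac{pd}{d-p}}} \le C\norm{\QDeriv u}_{L^p} ,
\]
which gives the theorem in this range.

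The hard part will be the endpoint $p = 1$, for which Hardy--Littlewood--Sobolev only provides weak-type bounds. Here I would exploit the additional structure $\QDeriv u = \nabla(g * u)$ --- which follows from \eqref{eqFractionalGradient} by isolating the term $u(x)\int_{\R^d} \nabla g \, \dif y = 0$ and integrating by parts --- so that $\QDeriv u$ is a \emph{curl-free} vector field belonging to $L^1(\R^d;\R^d)$, and then invoke the $L^1$-type estimate for Riesz potentials of curl-free fields established in Section~\ref{sectionNonlocalSobolevinequalities} in the spirit of \cite{VS} and \cite{SSVS}. Applied with the exponent $\beta = s$ to the inner part and with $\beta = 1$ (the classical Bourgain--Brezis--Van~Schaftingen situation) to the outer part, it gives
\[
\norm{V_0 * \QDeriv u}_{L^{\frac{d}{d-s}}} + \norm{V_\infty * \QDeriv u}_{L^{\frac{d}{d-1}}} \le C\norm{\QDeriv u}_{L^1} .
\]

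The delicate point in the $p = 1$ case is that, unlike the Riesz fractional gradient, the kernels $V_0$ and $V_\infty$ are not homogeneous, so the $L^1$ bound cannot be quoted as a black box. Instead one has to rerun the duality argument of Section~\ref{sectionNonlocalSobolevinequalities} with $V_0$ (respectively $V_\infty$) in place of the homogeneous model kernel; this is legitimate because, by \eqref{introV1}, $V_0$ and $V_\infty$ obey on their supports the same pointwise \emph{and} derivative estimates as $I_s$ and $I_1$, which is exactly the input that argument requires. Combining the two ranges of $p$ then completes the proof.
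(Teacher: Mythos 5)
Your proposal follows the paper's proof essentially verbatim: both decompose $V$ with a smooth cutoff into a near-origin and a far-field piece obeying \eqref{introV1}, both use classical Riesz-potential/O'Neil estimates for $p>1$, and both invoke Proposition~\ref{propositionFractionalSobolev} (the Van~Schaftingen-type duality estimate under the curl-free hypothesis) at the endpoint $p=1$. One small caveat: your sketch deriving $\QDeriv u = \nabla(g*u)$ by ``isolating $u(x)\int_{\R^d}\nabla g\,\difq y = 0$'' is problematic because $\nabla g \sim |x|^{-(d+s)}$ is not integrable near the origin; the curl-free property of $\QDeriv u$ is more directly the corollary following Proposition~\ref{propositionRepresentationGradient}, obtained by commuting convolution and differentiation in $\QDeriv u = g * \nabla u$.
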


Unless otherwise stated, $C>0$ is an absolute constant that may depend on the parameters $d$, $s$, $t$, $p$ and on the function $g$ but that does not depend on the function $u$.
Let us recall that $u$ belongs to $(L^m+L^q)(\R^d)$ whenever there exist $u_1\in L^m(\R^d)$ and $u_2\in L^q(\R^d)$ so that $u=u_1+u_2$, in which case \(\|u\|_{L^m+L^q}\) is the infimum of the sum
\[
\|u_1\|_{L^m(\R^d)}+\|u_2\|_{L^q(\R^d)}
\]
over all such decompositions of \(u\).

In the same spirit as Theorem~\ref{introsobemb1} we have:

\begin{theorem}\label{introsobemb2}
         Take $d\ge3$ and $1 \le p < \min{\{d/s,d/t\}}$ with $0 < s,t <1$.
         Assume that \(g\) is given by \eqref{eqGTwo} and satisfies \eqref{assurepositivityofFourier}. 
         Then, for every \(u \in C_{c}^{\infty}(\R^{d})\),{}
\[{}
\norm{u}_{L^{\frac{pd}{d - sp}} + L^{\frac{pd}{d - tp}}}
\le C \norm{\QDeriv{u}}_{L^{p}(\R^d)}.
\]
\end{theorem}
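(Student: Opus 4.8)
The plan is to derive the inequality directly from the representation formula $u = \QDeriv u$ of Theorem~\ref{introrepfor} together with the pointwise bound~\eqref{introV2} on $V$, after splitting the kernel along its two homogeneity regimes. Since $g$ has the form~\eqref{eqGTwo} and satisfies~\eqref{assurepositivityofFourier}, Theorem~\ref{introrepfor} produces a smooth field $V\colon\R^d\setminus\{0\}\to\R^d$ with $u = V*\QDeriv u$ and, by~\eqref{introV2} with $\nu = 0$, $\abs{V(x)}\le C\abs{x}^{s-d}$ for $\abs{x}\le 1$ and $\abs{V(x)}\le C\abs{x}^{t-d}$ for $\abs{x}\ge 1$. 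Write $V = V_0 + V_\infty$ with $V_0 := V\mathbf{1}_{B_1}$ and $V_\infty := V\mathbf{1}_{\R^d\setminus B_1}$. Then $V_0 \in L^{d/(d-s),\infty}(\R^d)$ and $V_\infty \in L^{d/(d-t),\infty}(\R^d)$, because $\abs{x}^{s-d}\mathbf{1}_{B_1}$ and $\abs{x}^{t-d}\mathbf{1}_{\R^d\setminus B_1}$ belong to these weak Lebesgue spaces; moreover $\QDeriv u \in L^1(\R^d)\cap L^\infty(\R^d)$ whenever $u\in C_c^\infty(\R^d)$, so that $u = V_0*\QDeriv u + V_\infty*\QDeriv u$ with both convolutions well-defined.

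For $1 < p < \min\{d/s,d/t\}$ I would conclude by applying the weak-type Young inequality (O'Neil's convolution inequality) to each summand. For $V_0$ the output exponent $r_1$ is fixed by $1/r_1 = 1/p + (d-s)/d - 1 = 1/p - s/d$, that is $r_1 = pd/(d-sp)$, and the constraint $1<p<d/s$ gives $1<r_1<\infty$, whence $\norm{V_0*\QDeriv u}_{L^{pd/(d-sp)}(\R^d)} \le C\norm{V_0}_{L^{d/(d-s),\infty}}\norm{\QDeriv u}_{L^p(\R^d)} \le C\norm{\QDeriv u}_{L^p(\R^d)}$; symmetrically, using $p<d/t$, one gets $\norm{V_\infty*\QDeriv u}_{L^{pd/(d-tp)}(\R^d)} \le C\norm{\QDeriv u}_{L^p(\R^d)}$. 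Since $u = V_0*\QDeriv u + V_\infty*\QDeriv u$ is an admissible decomposition for the sum space, the definition of $\norm{\,\cdot\,}_{L^{pd/(d-sp)}+L^{pd/(d-tp)}}$ yields the desired bound. The same scheme proves Theorem~\ref{introsobemb1}, with the tail exponent of~\eqref{introV1} producing $pd/(d-p)$ in place of $pd/(d-tp)$.

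The main obstacle is the endpoint $p=1$: there O'Neil only gives $V_0*\QDeriv u\in L^{d/(d-s),\infty}$ and $V_\infty*\QDeriv u\in L^{d/(d-t),\infty}$, and the sum of these weak spaces is strictly larger than $L^{d/(d-s)}+L^{d/(d-t)}$, so a soft convolution argument cannot suffice. To upgrade to the strong norm one has to exploit the extra structure $\QDeriv u = g*\nabla u = \nabla(g*u)$, which makes $\QDeriv u$ curl-free. Here I would adapt the duality argument employed in Section~\ref{sectionNonlocalSobolevinequalities} for~\eqref{eq:L1_inequality} in the spirit of~\cite{SSVS}: pair $V_0*\QDeriv u$ and $V_\infty*\QDeriv u$ with continuous compactly supported vector fields, integrate by parts to move the derivative onto the smooth kernels $V_0,V_\infty$, and invoke the Van~Schaftingen cancellation estimate to replace each weak-type bound by the matching strong one in the corresponding homogeneity regime; alternatively, near the origin one can compare $V_0*g$ with the Riesz kernel $\abs{z}^{1-d}$ and borrow the local estimate from Theorem~\ref{propositionEstimateRieszFractionalGradient}. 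Once the endpoint is settled the proof closes exactly as for $p>1$; the hypothesis $d\ge3$ enters only through the representation formula of Theorem~\ref{introrepfor}.
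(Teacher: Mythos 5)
For $1 < p < \min\{d/s,d/t\}$ your argument matches the paper's: split $V$ across its two homogeneity regimes, check that each piece lies in the appropriate weak Lebesgue space via \eqref{introV2}, and conclude with O'Neil's convolution inequality. At that level, sharp indicators versus a smooth cutoff is immaterial.

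The endpoint $p = 1$, however, has a genuine gap as written. You correctly identify that the strong-type bound must exploit the curl-free structure of $\QDeriv u$ through the Van~Schaftingen mechanism, which in this paper is Proposition~\ref{propositionFractionalSobolev}. But that proposition requires $v \in C^\infty(\R^d\setminus\{0\})$ with the pointwise control \eqref{eqFractionalSobolevAssumption} on \emph{both} $v$ and $\nabla v$. Your $V_0 = V\chi_{B_1}$ and $V_\infty = V\chi_{\R^d\setminus B_1}$ are discontinuous across $\partial B_1$: they are not in $C^\infty(\R^d\setminus\{0\})$, their distributional gradients carry a surface measure on $\partial B_1$, and the convolution estimates of Lemma~\ref{lemmaEstimateConvolutionV} do not apply to them --- so calling them ``smooth kernels'' in the duality step is unjustified. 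The fallback you mention (comparing $V_0*g$ with the Riesz kernel) also does not hold: $V_0*g$ is not the Riesz kernel; only the full $V*g$ is. The cure is the one the paper uses: split $V = V\varphi_1 + V(1-\varphi_1) =: V_1 + V_2$ with $\varphi_1 \in C_c^\infty(\R^d)$ radial, $\varphi_1 = 1$ on $B_1$ and $\supp\varphi_1 \subset B_2$. Then $V_1, V_2 \in C^\infty(\R^d\setminus\{0\})$, and by the Leibniz rule together with \eqref{introV2} one has $\abs{V_1(z)} + \abs{z}\abs{\nabla V_1(z)} \le C\abs{z}^{s-d}$ and $\abs{V_2(z)} + \abs{z}\abs{\nabla V_2(z)} \le C\abs{z}^{t-d}$ for every $z \ne 0$ (the annulus $B_2\setminus B_1$ only contributes bounded terms). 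Proposition~\ref{propositionFractionalSobolev}, applied componentwise with $F = \QDeriv u$ (curl-free by the corollary following Proposition~\ref{propositionRepresentationGradient}), then gives $\norm{V_1*\QDeriv u}_{L^{d/(d-s)}} \le C\norm{\QDeriv u}_{L^1}$ and $\norm{V_2*\QDeriv u}_{L^{d/(d-t)}} \le C\norm{\QDeriv u}_{L^1}$, and the sum-space bound follows as you describe. Once the indicators are replaced by this smooth cutoff, your plan reproduces the paper's (omitted) proof.
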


The first step in the proof of Theorem~\ref{introsobemb2} is to write the vector field \(V\) as $V=V_1+V_2$ so that $V_1$ behaves as $1/|x|^{d-s}$ and $V_2$ as $1/|x|^{d-t}$.
Therefore, the composition formula provided by Theorem~\ref{introrepfor} becomes
\[
u= V_1*\mathcal{G}u+V_2*\mathcal{G}u.
\]
The second step is to treat the two contributions on the right hand side above. 
While for $p>1$ it is enough to rely on classical estimates for the Riesz potential, the borderline case $p=1$ requires an adaptation of the more delicate proof of the already mentioned \cite{SSVS}*{Theorem~A}, see Section~\ref{sectionNonlocalSobolevinequalities} below. 

Actually, for $p>1$ we may improve Theorems~\ref{introsobemb1} and \ref{introsobemb2} by both refining the embedding on the Lorentz scale and having a better description of the influence of the different homogeneities of $g$.

\begin{theorem} \label{introsobemb3}
        Take $d\ge3$ and $1 < p < d/s$ with $0 < s < 1$.
         Assume that \(g\) is given by \eqref{eqGLocal} or \eqref{eqGIntermediate} and satisfies \eqref{assurepositivityofFourier}. 
        Then, for every \(u \in C_{c}^{\infty}(\R^{d})\), there exists $k\ge 0$ depending on \(u\) such that
\[
\|G_k(u)\|_{L^{\frac{pd}{d-sp},p}(\mathbb{R}^d)}+\|T_k(u)\|_{L^{\frac{pd}{d-p},p}(\mathbb{R}^d)} \le C \norm{\QDeriv{u}}_{L^{p}(\mathbb{R}^d)}.
\]
\end{theorem}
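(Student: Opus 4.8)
The plan is to combine the representation formula of Theorem~\ref{introrepfor} with Riesz potential estimates on Lorentz spaces, and then convert the resulting decomposition of \(u\) into one governed by truncations.

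\emph{Step 1 (representation and splitting of the kernel).} By Theorem~\ref{introrepfor}, \(u = V*\mathcal{G}u\), where \(V\) obeys the bounds \eqref{introV1}. I would fix \(\eta \in C_c^\infty(\R^d)\) with \(\eta \equiv 1\) on \(B_1\) and \(\supp \eta \subset B_2\), and set \(V_1 = \eta V\), \(V_2 = (1-\eta)V\), so that \(u = w_1 + w_2\) with \(w_i := V_i * \mathcal{G}u\). From \eqref{introV1}, \(|V_1(x)| \le C|x|^{-(d-s)}\chi_{B_2}(x)\), which is dominated by a truncated Riesz kernel of order \(s\), while \(|V_2(x)| \le C|x|^{-(d-1)}\) for every \(x \neq 0\) (it vanishes on \(B_1\) and decays like \(|x|^{-(d-1)}\) at infinity), hence is dominated by the Riesz kernel of order \(1\). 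One also notes that \(\mathcal{G}u \in L^p(\R^d)\) for \(u \in C_c^\infty(\R^d)\): indeed \(\mathcal{G}u\) is bounded, and for the kernels \eqref{eqGLocal} and \eqref{eqGIntermediate} it is compactly supported, respectively decays like \(|x|^{-\alpha-1}\) at infinity.

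\emph{Step 2 (Lorentz estimates for the two pieces).} Since \(1 < p < d/s\), the Hardy--Littlewood--Sobolev inequality on Lorentz spaces (O'Neil's convolution inequality, applied to the weak-type kernels \(|x|^{-(d-s)} \in L^{d/(d-s),\infty}\) and \(|x|^{-(d-1)} \in L^{d/(d-1),\infty}\)) yields that convolution with the order-\(s\) kernel maps \(L^p(\R^d) = L^{p,p}(\R^d)\) into \(L^{\frac{pd}{d-sp},p}(\R^d)\), and convolution with the order-\(1\) kernel maps \(L^p(\R^d)\) into \(L^{\frac{pd}{d-p},p}(\R^d)\). Hence
\[
\|w_1\|_{L^{\frac{pd}{d-sp},p}(\R^d)} + \|w_2\|_{L^{\frac{pd}{d-p},p}(\R^d)} \le C\|\mathcal{G}u\|_{L^p(\R^d)}.
\]
That the second Lorentz index is preserved (equal to \(p\)) is exactly what upgrades Theorem~\ref{introsobemb1} to the present statement.

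\emph{Step 3 (from the decomposition \(u=w_1+w_2\) to truncations).} Write \(m = \frac{pd}{d-sp} < q = \frac{pd}{d-p}\). The spaces \(L^{m,p}\) and \(L^{q,p}\) are rearrangement invariant, and for \(m<q\) the \(K\)-functional of the couple \((L^{m,p},L^{q,p})\) is, up to universal constants, realised by the splitting of \(u\) into its super- and sub-level parts: there is a level \(k = k(u)\ge 0\), heuristically the value at which the decreasing rearrangements \(w_1^*\) and \(w_2^*\) cross, for which
\[
\|G_k(u)\|_{L^{m,p}(\R^d)} + \|T_k(u)\|_{L^{q,p}(\R^d)} \le C\bigl(\|w_1\|_{L^{m,p}(\R^d)} + \|w_2\|_{L^{q,p}(\R^d)}\bigr).
\]
Concretely, from \(|u| \le |w_1| + |w_2|\) one has \(u^*(\tau) \le w_1^*(\tau/2) + w_2^*(\tau/2)\); choosing \(k\) at the crossover, the super-level part \((u^*-k)_+\) is supported on a measure comparable to \(\{w_1^* > w_2^*\}\), where it is dominated by \(w_1^*(\cdot/2)\), while the sub-level part \(\min(u^*,k)\) is dominated by \(w_2^*(\cdot/2)\) on the complementary range and by the constant \(k\) on a set of the right measure; the Lorentz quasi-norm bounds then follow by a dilation in the \(\tau\)-variable together with Holmstedt-type estimates for the \(K\)-functional. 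Combining this with Step 2 gives the asserted inequality.

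\emph{Expected main obstacle.} Steps 1 and 2 are routine once \eqref{introV1} is in hand. The delicate point is Step 3: pinning down the level \(k(u)\) and controlling the factor-\(2\) shifts in the rearrangement inequality, i.e.\ showing that the canonical truncation decomposition of \(u\) lies within a constant — uniform in \(u\) — of the optimal sum-space decomposition \(u = w_1 + w_2\). One must also be mindful of the range of \(p\): the order-\(1\) estimate in Step 2 requires \(p < d\), so for \(d \le p < d/s\) the target \(L^{\frac{pd}{d-p},p}\) has to be reinterpreted (as a Hölder-type space) or the statement restricted to \(1 < p < d\).
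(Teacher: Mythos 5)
Your Steps~1 and~2 are sound but take a different route than the paper: the paper does \emph{not} split \(V\) into two pieces.  Instead it treats \(h = V*\QDeriv u\) as a whole, using the two‑regime bound \eqref{introV1} on \(V\) in Proposition~\ref{propositionFractionalSobolevp>1} to produce the refined two‑region estimate
\[
\int_0^1 \tau^{\frac{d-sp}{d}} h^{**}(\tau)^p\,\frac{\difq\tau}{\tau}
+ \int_1^\infty \tau^{\frac{d-p}{d}} h^{**}(\tau)^p\,\frac{\difq\tau}{\tau}
\le C\,\norm{\QDeriv u}_{L^p}^p,
\]
which is a bound on the rearrangement of \(h\) directly (not on a decomposition of \(h\) into a sum of two functions).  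Then Proposition~\ref{truncationinlorentz} constructs the truncation explicitly at the level \(\bar k = \norm{\QDeriv u}_{L^p}\); the key device is the invertible majorant \(\mathrm{I}(\tau)\) with \(h^{**}(\tau)\le \mathrm{I}(\tau)\norm{\QDeriv u}_{L^p}\), which yields a bound \(A(\bar k)\le \mathrm{I}^{-1}(1)\) on the measure of the super‑level set and lets one split the Lorentz integrals at \(\tau=1\) and \(\tau=A(\bar k)\).  This is elementary and gives the level \(k\) in closed form.

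Your Step~3 is where the real work is, and as written it is not complete.  You are invoking, essentially, a Holmstedt/Calder\'on‑type statement: that for the Lorentz couple \((L^{m,p},L^{q,p})\) with \(m<q\), truncation produces a near‑optimal element of any \(K\)-functional decomposition, i.e.\ from \(u = w_1 + w_2\) with \(\norm{w_1}_{L^{m,p}}+\norm{w_2}_{L^{q,p}}\le A\) one can find a single level \(k\) with \(\norm{G_k(u)}_{L^{m,p}}+\norm{T_k(u)}_{L^{q,p}}\le CA\).  This is true, but the ``crossover'' heuristic you give assumes a single crossing of \(w_1^*\) and \(w_2^*\); you would need to handle the degenerate cases (one rearrangement dominating everywhere, multiple crossings, the choice \(\tau_0=\inf\{\tau : w_1^*(\tau)\le w_2^*(\tau)\}\) and how it propagates through the factor‑\(2\) dilations) and spell out the Lorentz‑norm comparisons, in particular the lower bound \(\norm{w_2}_{L^{q,p}}^p\gtrsim \tau_0^{p/q}\,w_2^*(\tau_0)^p\) that controls the ``flat'' contribution of the constant \(k\) on \((0,A(k))\).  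These are standard estimates but they carry real technical weight; the paper's direct construction of \(\bar k = \norm{\QDeriv u}_{L^p}\) together with Proposition~\ref{propositionFractionalSobolevp>1} bypasses this entirely, which is precisely why the proof in the paper is organised that way.

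One more point worth making precise: you correctly observed that the exponent \(pd/(d-p)\) requires \(p<d\).  The paper's proof has the same constraint, since Proposition~\ref{propositionFractionalSobolevp>1} with \(t=1\) requires \(tp<d\), i.e.\ \(p<d\).  Thus the range ``\(1<p<d/s\)'' in the statement of Theorem~\ref{introsobemb3} should really be read together with the implicit constraint \(p<d\); since \(s<1\) this is the binding one.  This observation is not a gap in your argument — it is a clarification of the theorem's hypotheses that applies equally to the paper's proof.
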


Here, 
\[ 
u = G_k(u) + T_k(u)
\]
where the truncations \(G_k\) and \(T_k\) are explicitly defined for every \(\tau \in \R\) as
\[
T_k(\tau) = \max{\{-k,\min{\{\tau,k\}}\}}
\quad \text{and} \quad 
G_k(\tau) = \tau- T_k(\tau).
\]
In the same spirit, concerning the two-scale fractional kernel, the following holds:

\begin{theorem}\label{introsobemb4}
        Take $d\ge3$ and $1 < p < \min{\{d/s,d/t\}}$ with $0 < s,t < 1$.
         Assume that \(g\) is given by \eqref{eqGTwo} and satisfies \eqref{assurepositivityofFourier}. 
	Then, for every \(u \in C_{c}^{\infty}(\R^{d})\), there exists $k \ge 0$ depending on \(u\) such that
\[
\|G_k(u)\|_{L^{\frac{pd}{d-sp},p}(\mathbb{R}^d)}+\|T_k(u)\|_{L^{\frac{pd}{d-tp},p}(\mathbb{R}^d)} 
\le C \norm{\QDeriv{u}}_{L^{p}(\mathbb{R}^d)}.
\]
\end{theorem}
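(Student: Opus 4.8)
The plan is to feed the representation formula $u = V \ast \mathcal{G}u$ of Theorem~\ref{introrepfor} into the convolution inequality of O'Neil on the Lorentz scale, exactly as in the proofs of Theorems~\ref{introsobemb2} and~\ref{introsobemb3}, and then to reorganize the resulting decomposition of $u$ into the truncated pieces $G_k(u)$ and $T_k(u)$. Since $g$ is given by \eqref{eqGTwo} and satisfies \eqref{assurepositivityofFourier}, Theorem~\ref{introrepfor} applies and the vector field $V$ obeys the decay bounds~\eqref{introV2}.

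I would first split the kernel as $V = V_1 + V_2$ with $V_1 := V\mathbf{1}_{B_1}$ and $V_2 := V\mathbf{1}_{\R^d \setminus B_1}$. The case $\nu = 0$ of~\eqref{introV2} gives $\abs{V_1(x)} \le C\abs{x}^{-(d-s)}\mathbf{1}_{\{\abs{x}\le 1\}}$ and $\abs{V_2(x)} \le C\abs{x}^{-(d-t)}\mathbf{1}_{\{\abs{x}\ge 1\}}$; since $\abs{x}^{-(d-s)} \in L^{d/(d-s),\infty}(\R^d)$ and $\abs{x}^{-(d-t)} \in L^{d/(d-t),\infty}(\R^d)$, and restricting a nonnegative function to a measurable set only decreases its weak $L^q$ quasi-norm, we get $V_1 \in L^{d/(d-s),\infty}(\R^d)$ and $V_2 \in L^{d/(d-t),\infty}(\R^d)$ with norms controlled by an absolute constant. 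Writing $w_i := V_i \ast \mathcal{G}u$, Theorem~\ref{introrepfor} gives $u = w_1 + w_2$. Setting $q_s := \tfrac{pd}{d - sp}$ and $q_t := \tfrac{pd}{d - tp}$, the hypothesis $1 < p < \min\{d/s, d/t\}$ is precisely what makes $\tfrac{1}{q_s} = \tfrac1p - \tfrac sd$ and $\tfrac{1}{q_t} = \tfrac1p - \tfrac td$ lie in $(0,1)$; O'Neil's inequality then yields $\norm{w_1}_{L^{q_s,p}} \le C\norm{V_1}_{L^{d/(d-s),\infty}}\norm{\mathcal{G}u}_{L^p} \le C\norm{\mathcal{G}u}_{L^p}$ and, likewise, $\norm{w_2}_{L^{q_t,p}} \le C\norm{\mathcal{G}u}_{L^p}$. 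Taking the $(L^{q_s}+L^{q_t})$-norm of $u = w_1 + w_2$ already gives the Lorentz-refined form of Theorem~\ref{introsobemb2}; the remaining task is to pass to $G_k(u)$ and $T_k(u)$.

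For that I would invoke an elementary truncation lemma: if $q' < q$ and $u = w_1 + w_2$ with $w_1 \in L^{q',p}$ and $w_2 \in L^{q,p}$, then for $k$ comparable to $\norm{w_1}_{L^{q',p}} + \norm{w_2}_{L^{q,p}}$ one has $\norm{G_k(u)}_{L^{q',p}} + \norm{T_k(u)}_{L^{q,p}} \le C(\norm{w_1}_{L^{q',p}} + \norm{w_2}_{L^{q,p}})$. This follows from the pointwise inequalities $G_k(\abs{u}) \le G_{k/2}(\abs{w_1}) + G_{k/2}(\abs{w_2})$ and $T_k(\abs{u}) \le T_k(\abs{w_1}) + T_k(\abs{w_2})$, together with the one-line distribution-function bounds, valid for any $f$,
\[
\norm{G_k(f)}_{L^{q',p}} \le C\, k^{1-q/q'}\,\norm{f}_{L^{q,\infty}}^{q/q'}
\qquad\text{and}\qquad
\norm{T_k(f)}_{L^{q,p}} \le C\, k^{1-q'/q}\,\norm{f}_{L^{q',\infty}}^{q'/q}.
\]
Applying this with $q' = q_s \le q_t = q$ (assume without loss of generality $s \le t$; the other case is symmetric), $M := \norm{\mathcal{G}u}_{L^p}$, and $k \asymp M$, one checks that $\norm{w_1}_{L^{q_s,p}}, \norm{w_2}_{L^{q_t,p}} \le CM$ directly, that $\norm{G_{k/2}(w_2)}_{L^{q_s,p}} \le C(k/M)^{1-q_t/q_s}M \le CM$ because $1 - q_t/q_s \le 0$ and $k/M$ is bounded below, and that $\norm{T_k(w_1)}_{L^{q_t,p}} \le C(k/M)^{1-q_s/q_t}M \le CM$ because $k/M$ is bounded above; summing gives the desired inequality with a threshold $k = k(u) \asymp \norm{\mathcal{G}u}_{L^p}$.

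The delicate endpoint $p = 1$ of this family of estimates does not arise here, so the convolution step is a direct application of O'Neil's inequality; the only genuinely new point beyond Theorem~\ref{introsobemb2} is the truncation reformulation. Its subtlety is the need for a \emph{single} threshold $k$ that simultaneously places the unbounded part $G_k(u)$ in the smaller exponent $L^{q_s,p}$ --- the regime dictated by the near-origin, $s$-homogeneous piece $V_1$ of the kernel --- and the bounded part $T_k(u)$ in the larger exponent $L^{q_t,p}$, dictated by the far-field, $t$-homogeneous piece $V_2$. The requirement ``$k$ large enough for the tail of $w_2$ to fall into $L^{q_s,p}$'' and the requirement ``$k$ small enough for the truncation of $w_1$ to fall into $L^{q_t,p}$'' are mutually compatible exactly at the scale $k \asymp \norm{\mathcal{G}u}_{L^p}$, which is why the threshold must be allowed to depend on $u$.
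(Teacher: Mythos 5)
Your approach is genuinely different from the paper's. The paper keeps the kernel $V$ intact and feeds the rearrangement estimate $h^{**}(\tau) \le \int_\tau^\infty v^{**}(y)F^{**}(y)\,\mathrm{d}y$ into a doubly-weighted integral bound (Proposition~\ref{propositionFractionalSobolevp>1}, proved with Hardy's inequality and an integration-by-parts trick), then truncates the convolution $h$ directly at $\bar k = \|F\|_{L^p}$ using the superlevel-set bound $A(\bar k)\le \mathrm{I}^{-1}(1)$ (Proposition~\ref{truncationinlorentz}). You instead split the \emph{kernel} as $V = V_1 + V_2$, apply O'Neil's convolution theorem to each piece to get $w_1\in L^{q_s,p}$ and $w_2\in L^{q_t,p}$, and then recombine by truncating the sum $u = w_1 + w_2$ via the pointwise inequalities $G_k(|u|)\le G_{k/2}(|w_1|)+G_{k/2}(|w_2|)$ and $T_k(|u|)\le T_k(|w_1|)+T_k(|w_2|)$ together with explicit distribution-function bounds for $G_k$ and $T_k$. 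That is more modular and avoids the Hardy-type computation; the paper's route is arguably more unified, treating both exponent regimes with a single rearrangement estimate on the full kernel, and it also covers both orderings of $s$ and $t$ uniformly.

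The ``without loss of generality $s\le t$'' step is not a genuine symmetry, and this is the one real gap. Your truncation lemma needs $q'<q$: the $G_k$ bound $\|G_k(f)\|_{L^{q',p}} \le Ck^{1-q/q'}\|f\|_{L^{q,\infty}}^{q/q'}$ comes from $\int_0^{\tau_0}\tau^{p/q'-p/q-1}\,\mathrm{d}\tau$, which converges near $0$ only if $q'<q$, and the $T_k$ bound similarly needs $q'<q$ for the far integral. With $q'=q_s$, $q=q_t$ this is exactly $s<t$, and when $s>t$ the lemma would force $G_k(u)$ into the \emph{smaller} Lorentz exponent, which is not what the theorem asks. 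The case $s>t$ still holds, but for a separate and in fact easier reason: since $V_1$ has compact support and $d/(d-t)<d/(d-s)$, one also has $V_1\in L^{d/(d-t),\infty}$; and since $V_2$ is bounded with decay $|x|^{-(d-t)}$ and $d/(d-s)>d/(d-t)$, one also has $V_2\in L^{d/(d-s),\infty}$. Hence both $w_1,w_2\in L^{q_s,p}\cap L^{q_t,p}$ with norms bounded by $C\|\QDeriv u\|_{L^p}$, and one may simply take $k=0$ (so $G_0(u)=u$ and $T_0(u)=0$). You should replace the ``symmetric'' claim with this short separate argument.
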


We observe in the previous statement that the estimate of $G_k(u)$ in the Lorentz spaces $L^{\frac{pd}{d-sp},p}(\mathbb{R}^d)$ tells us that the local integrability of $u$ is related to the singularity at zero of the kernel $g$. Similarly, the estimate of $T_k(u)$ expresses the integrability at infinity of $u$ that depends on the tail of $g$.

The level at which we slice the function $u$ in our proof is taken as $k=\norm{\QDeriv{u}}_{L^{p}(\mathbb{R}^d)}$. 
Let us point out that the level of truncation $k$ cannot be chosen independently of \(u\). 
Indeed, if this was the case, taking the limit as $k$ goes to $0$ or infinity, one would obtain an estimate where the influence of one of the two homogeneities of $g$ is lost.

For the Riesz fractional gradient, namely when $g(x)=c/|x|^{d-1+s}$, Spector~\cite{Spector} showed that Theorem~\ref{introsobemb4} is valid for $p=1$ and $s=t$. 
This leads to the following question:

\begin{openproblem}
	  Given \(g\) of the form \eqref{eqGTwo}, is it true that for every $u\in C^{\infty}_c(\mathbb{R}^d)$ there exists a truncation level $k=k(u)$ such that
    \[
\|G_k(u)\|_{L^{\frac{d}{d-s},1}(\mathbb{R}^d)}+\|T_k(u)\|_{L^{\frac{d}{d-t},1}(\mathbb{R}^d)} 
\le C \norm{\QDeriv{u}}_{L^{1}(\mathbb{R}^d)} ?
\]
\end{openproblem}

\section{Relation between \texorpdfstring{$\QDeriv u$}{Gu} and \texorpdfstring{$\nabla u$}{grad u}}

We begin by showing that the fractional gradient can be seen as a convolution of the classical gradient:

\begin{proposition}
\label{propositionRepresentationGradient}
If \(g \in C^\infty(\R^d \setminus \{0\})\) satisfies \eqref{eq:gradient_condition}, then \(g \in L^1\loc(\R^d)\) and, for every \(u \in C_c^\infty(\R^d)\), we have
\[
\QDeriv u
= g * \nabla u
\quad \text{in \(\R^d\).}
\]
\end{proposition}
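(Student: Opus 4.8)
The plan is to prove the two assertions separately, starting with local integrability and then the convolution identity.

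\emph{Local integrability of $g$.} First I would show that \eqref{eq:gradient_condition} forces $g \in L^1\loc(\R^d)$. The issue is only near the origin, since the hypothesis already controls $\nabla g$ there. The idea is to integrate along rays: for $x \in B_1 \setminus \{0\}$ write $g(x) = g(x/\abs{x}) - \int_{\abs{x}}^{1} \frac{d}{d\rho}\big(g(\rho x/\abs{x})\big)\dif \rho$, so that
\[
\abs{g(x)} \le \sup_{\partial B_1} \abs{g} + \int_{\abs{x}}^{1} \abs{\nabla g(\rho x/\abs{x})} \dif\rho.
\]
Integrating this bound over $B_1$ in polar coordinates and applying Tonelli's theorem, the double integral of $\abs{\nabla g(\rho \theta)}$ against $r^{d-1}\dif r\dif\theta$ over the region $0<r<\rho<1$, $\theta \in \partial B_1$ is controlled — after switching the order of integration and using $r^{d-1}\le \rho^{d-1} \cdot \rho \cdot r^{-1}$ is too crude; instead bound $\int_0^\rho r^{d-1}\dif r = \rho^d/d \le \rho^{d-1}\cdot\rho/d$ and recognize $\int_{\partial B_1}\int_0^1 \abs{\nabla g(\rho\theta)}\rho \cdot \rho^{d-1}\dif\rho\dif\theta = \int_{B_1}\abs{x}\abs{\nabla g(x)}\dif x < \infty$. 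The continuity of $g$ on $\partial B_1$ (compact) handles the boundary term. Hence $g\in L^1\loc$.

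\emph{The convolution identity.} With $g \in L^1\loc(\R^d)$, for fixed $x$ the function $y \mapsto g(x-y)$ is locally integrable and $u - u(x)$ is bounded with compact support, plus $\nabla g$ is integrable away from the diagonal by \eqref{eq:gradient_condition} (after the same ray estimate shows $\nabla g \in L^1(B_1)$ as well, since $\abs{x}\abs{\nabla g(x)} \ge \abs{\nabla g(x)}$ fails — rather one uses that $\int_{B_1}\abs{x}\abs{\nabla g}<\infty$ does not give $\nabla g \in L^1(B_1)$, so I must be careful here). The cleanest route: I would like to write $\QDeriv u(x) = -\int (u(x)-u(y))\nabla g(x-y)\dif y$ and integrate by parts in $y$. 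Since $u(x)-u(y)$ vanishes at $y=x$ to first order, the product $(u(x)-u(y))\nabla g(x-y)$ is integrable near the diagonal using the $\abs{z}\abs{\nabla g(z)}$ bound together with $\abs{u(x)-u(y)}\le \norm{\nabla u}_\infty \abs{x-y}$, which gives an integrand controlled by $\norm{\nabla u}_\infty\,\abs{z}\abs{\nabla g(z)}$ near $z=0$, hence integrable; at infinity it is controlled by $2\norm{u}_\infty \abs{\nabla g(z)}\mathbf{1}_{\{x-y\in\supp u\text{'s reflection or }x\in\supp u\}}$, again integrable. Then I would regularize by excising a ball $B_\varepsilon(x)$, apply the divergence theorem componentwise on $\R^d \setminus B_\varepsilon(x)$ to move the derivative from $g(x-y)$ onto $u(y)$ (noting $\nabla_y g(x-y) = -(\nabla g)(x-y)$), collect the boundary term on $\partial B_\varepsilon(x)$, and let $\varepsilon \to 0$. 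The boundary term is $\int_{\partial B_\varepsilon(x)} (u(x)-u(y)) g(x-y)\,n\dif S$, whose modulus is $\le \norm{\nabla u}_\infty\, \varepsilon\, \sigma_d \varepsilon^{d-1}\,\sup_{\partial B_\varepsilon}\abs{g}$; since $g\in L^1\loc$ one has $\varepsilon^d \sup_{\partial B_\varepsilon}\abs{g} \to 0$ along a sequence (or one uses the explicit ray bound $\abs{g}\lesssim 1 + \int_{\abs z}^1\abs{\nabla g}$ to see $\varepsilon^{d}\sup_{\partial B_\varepsilon}\abs g \to 0$), so this term vanishes. What remains is $-\int u(y)\,\nabla_y g(x-y)\dif y$, and another integration by parts (or directly differentiating under the integral sign, justified by dominated convergence using $g\in L^1\loc$ and $\nabla u$ compactly supported) yields $\int g(x-y)\nabla u(y)\dif y = (g*\nabla u)(x)$.

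\emph{Main obstacle.} The delicate point is the justification near the diagonal: making rigorous that all integrations by parts are legitimate given only the weighted bound $\int_{B_1}\abs{x}\abs{\nabla g}<\infty$ rather than $\nabla g \in L^1(B_1)$, and controlling the boundary term $\varepsilon^d\sup_{\partial B_\varepsilon}\abs g$. I expect to handle this by the ray representation of $g$ established in the first part, which gives a pointwise bound on $\abs{g}$ near $0$ that is integrable and decays fast enough when multiplied by $\varepsilon^d$; combined with the linear vanishing of $u(x)-u(y)$, this makes every step a routine dominated-convergence argument. The rest is bookkeeping with Fubini.
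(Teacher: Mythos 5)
Your overall plan matches the paper's: a ray-wise estimate to get $g \in L^1(B_1)$ (the polar-coordinates computation you sketch is exactly the paper's, and is correct), followed by the divergence theorem to transfer the derivative from $g$ onto $u$. But the way you handle the boundary terms in the divergence theorem has two genuine gaps.

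\emph{The boundary at infinity is missing.} You apply the divergence theorem on the unbounded domain $\R^d \setminus B_\varepsilon(x)$ and collect only the inner boundary on $\partial B_\varepsilon(x)$. This silently assumes that the flux through a large sphere $\partial B_R(x)$ tends to zero as $R \to \infty$, which is not automatic: that term is bounded by $2\norm{u}_{L^\infty}\int_{\partial B_R}\abs{g}\dif\sigma$, and the hypothesis $\int_{\R^d \setminus B_1}\abs{\nabla g} < \infty$ does not by itself force this to vanish. The paper instead works on a bounded annulus $B_R(x)\setminus B_r(x)$ and handles this explicitly: Lemma~\ref{lemmaQEstimate} uses a cutoff $\psi$ and the Sobolev inequality to show $g \in L^{\frac{d}{d-1}}(\R^d \setminus B_1)$, which via Hölder gives $\int_1^\infty \bigl(\int_{\partial B_t}\abs{g}\dif\sigma\bigr)^{\frac{d}{d-1}}\frac{\dif t}{t} < \infty$; since $\int_1^\infty \frac{\dif t}{t} = \infty$, there is a sequence $R_j \to \infty$ along which $\int_{\partial B_{R_j}}\abs{g}\to 0$. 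This step is entirely absent from your proposal.

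\emph{The inner boundary is bounded too crudely.} You control the inner term by $\norm{\nabla u}_{L^\infty}\,\varepsilon\,\sigma_d\varepsilon^{d-1}\sup_{\partial B_\varepsilon}\abs{g}$ and then assert $\varepsilon^d\sup_{\partial B_\varepsilon}\abs{g}\to 0$ along a sequence from $g \in L^1\loc$. That implication is false: $L^1$ control does not govern the pointwise supremum on a sphere (a smooth $g$ may carry thin angular spikes whose heights blow up while their $L^1$ mass is summable). Nor does the ray bound $\abs{g(\varepsilon\theta)}\lesssim 1 + \int_\varepsilon^1\abs{\nabla g(\rho\theta)}\dif\rho$ rescue this, because \eqref{eq:gradient_condition} only controls the radially weighted quantity $\int_{B_1}\abs{x}\abs{\nabla g}$ and gives no uniform-in-$\theta$ bound. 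The fix is exactly the sharper estimate in the paper: keep the spherical integral $\int_{\partial B_\varepsilon}\abs{g}\dif\sigma$ instead of $\sigma_d\varepsilon^{d-1}\sup\abs{g}$. Then $\int_0^1 t\bigl(\int_{\partial B_t}\abs{g}\dif\sigma\bigr)\frac{\dif t}{t} = \int_{B_1}\abs{g} < \infty$ together with $\int_0^1\frac{\dif t}{t} = \infty$ yields a sequence $r_j \to 0$ with $r_j\int_{\partial B_{r_j}}\abs{g}\to 0$, which suffices. With both limits taken along these chosen subsequences, the remaining bookkeeping in your proposal goes through as you describe.
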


This identity, implicit for example in \cite{MS,DGLZ}, was highlighted by Brezis and Mironescu~\cite{BrezisMironescu:2023} as a key ingredient behind nonlocal approximations of the gradient in the spirit of \eqref{eq-318}.
The composition in the right-hand side is well-defined since \(\nabla u\) is a bounded function with compact support and \(g\) is locally summable.
That the latter holds follows from

\begin{lemma}
\label{lemmaQEstimate}
If \(g \in C^\infty(\R^d \setminus \{0\})\)  satisfies \eqref{eq:gradient_condition}, then 
\[
\int_{B_1} |g| + \int_{\R^d \setminus B_1} |g|^\frac{d}{d-1} < \infty.
\]
In particular, \(g \in (L^1 + L^\frac{d}{d-1})(\R^{d})\).
\end{lemma}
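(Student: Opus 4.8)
The plan is to control $g$ by integrating its gradient along rays emanating from $0$ and from $\infty$, using the two pieces of the hypothesis \eqref{eq:gradient_condition} separately. For the behavior near the origin, I would write, for $x \in B_1 \setminus \{0\}$,
\[
g(x) = g\Bigl(\tfrac{x}{|x|}\Bigr) - \int_{|x|}^{1} \frac{d}{dr}\, g\Bigl(r\tfrac{x}{|x|}\Bigr) \dif r
= g\Bigl(\tfrac{x}{|x|}\Bigr) - \int_{|x|}^{1} \nabla g\Bigl(r\tfrac{x}{|x|}\Bigr)\cdot \tfrac{x}{|x|} \dif r,
\]
so that $|g(x)| \le \max_{\partial B_1}|g| + \int_{|x|}^{1} |\nabla g(r x/|x|)| \dif r$. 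Integrating over $B_1$ in polar coordinates and applying Fubini/Tonelli to the double integral, the contribution of the gradient term becomes $\int_{B_1}|\nabla g(y)| \bigl(\int_{|y|}^1 \rho^{d-1}\dif \rho\bigr)\frac{\dif y}{|y|^{d-1}} \le \frac{1}{d}\int_{B_1}|y|\,|\nabla g(y)|\dif y$ up to constants, which is finite by \eqref{eq:gradient_condition}; the constant term on $\partial B_1$ is harmless since $|B_1|<\infty$. This gives $\int_{B_1}|g| < \infty$.

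For the behavior at infinity I would integrate the gradient from $|x|$ out to $\infty$. Since \eqref{eq:gradient_condition} gives $\nabla g \in L^1(\R^d\setminus B_1)$, the radial function $r \mapsto g(r\theta)$ has a limit as $r\to\infty$ for a.e.\ direction $\theta$, and one checks this limit must be $0$ (otherwise $|\nabla g|$ would fail to be integrable at infinity, or one can simply absorb a bounded defect, but in fact for radial $g$ it is cleanest to argue the limit is $0$; alternatively note $g$ tends to a constant along rays and subtract it — but the $L^{d/(d-1)}$ bound below forces that constant to vanish). Thus for $|x|\ge 1$,
\[
|g(x)| \le \int_{|x|}^{\infty} \Bigl|\nabla g\Bigl(r\tfrac{x}{|x|}\Bigr)\Bigr| \dif r =: h\Bigl(\tfrac{x}{|x|}, |x|\Bigr).
\]
Now I would estimate the $L^{d/(d-1)}$ norm. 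Writing $x = \rho\theta$ with $\rho = |x|\ge 1$, $\theta\in\partial B_1$, one has
\[
\int_{\R^d\setminus B_1}|g|^{\frac{d}{d-1}}
\le \int_{\partial B_1}\int_1^\infty \Bigl(\int_\rho^\infty |\nabla g(r\theta)|\dif r\Bigr)^{\frac{d}{d-1}} \rho^{d-1}\dif\rho\dif\sigma(\theta).
\]
The inner double integral in $\rho$ is a one-dimensional Hardy-type expression: with $F(\rho) = \int_\rho^\infty |\nabla g(r\theta)|\dif r$ one has $-F'(\rho)=|\nabla g(\rho\theta)|$ and $F(\infty)=0$, and I would apply the Hardy inequality
\[
\int_1^\infty F(\rho)^{\frac{d}{d-1}}\rho^{d-1}\dif\rho \le C_d \int_1^\infty |\nabla g(\rho\theta)|^{\frac{d}{d-1}} \rho^{d-1}\cdot\rho^{\frac{d}{d-1}}\dif\rho,
\]
Hmm — this is not quite the clean route. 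The cleaner approach: bound $F(\rho)=\int_\rho^\infty|\nabla g(r\theta)|\dif r$ directly and interchange, noting $\int_{\partial B_1}\int_\rho^\infty |\nabla g(r\theta)|\dif r\dif\sigma(\theta)$ relates to $\int_{\R^d\setminus B_\rho}|\nabla g|/|y|^{d-1}\dif y$. So I would instead estimate, via Minkowski's integral inequality in the $L^{d/(d-1)}(\partial B_1\times[1,\infty),\rho^{d-1}\dif\rho\dif\sigma)$ norm,
\[
\Bigl\|\,\rho\mapsto\!\int_\rho^\infty\!|\nabla g(r\cdot)|\dif r\Bigr\|_{L^{\frac{d}{d-1}}}
\le \int_1^\infty \bigl\|\,\rho\mapsto \mathbf 1_{\{\rho\le r\}}|\nabla g(r\cdot)|\,\bigr\|_{L^{\frac{d}{d-1}}}\dif r
\le \int_1^\infty |\nabla g(r\theta)| \,r^{\frac{d-1}{\,d/(d-1)\,}}\, \dif r\ \cdots
\]
and then recombine $r^{(d-1)^2/d}\le r^{d-1}$ for $r\ge 1$ together with $\nabla g\in L^1(\R^d\setminus B_1)$ — the key point being that the extra powers of $r$ are controlled because $r\ge1$ and the target exponent $d/(d-1)>1$ is matched against the $L^1$ hypothesis in the borderline Gagliardo–Nirenberg–Sobolev fashion. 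In fact the slickest argument is to recognize $\int_{\R^d\setminus B_1}|g|^{d/(d-1)}\le C_d\bigl(\int_{\R^d\setminus B_1}|\nabla g|\bigr)^{d/(d-1)}$ as exactly the Sobolev inequality $\|g\|_{L^{d/(d-1)}}\le C_d\|\nabla g\|_{L^1}$ applied to (a cutoff of) $g$ on the exterior domain, which is legitimate once we know $g(x)\to 0$ as $|x|\to\infty$.

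The main obstacle is therefore twofold: first, justifying that $g(x)\to 0$ as $|x|\to\infty$ (equivalently, that the radial limit along rays vanishes) from $\nabla g\in L^1(\R^d\setminus B_1)$ alone — this needs care since an $L^1$ gradient at infinity does not in general force decay, but combined with radiality and the requirement that the conclusion $g\in L^{d/(d-1)}(\R^d\setminus B_1)$ hold, the limit is forced to be $0$; and second, handling the Sobolev/Hardy step on the exterior domain cleanly, for which I would either invoke the Gagliardo–Nirenberg–Sobolev inequality on $\R^d$ applied to $\eta g$ with $\eta$ a smooth cutoff vanishing on $B_{1/2}$ and equal to $1$ outside $B_1$ (absorbing the error term $\int_{B_1\setminus B_{1/2}}|g|$, already shown finite, and $\int|\nabla\eta||g|$, likewise finite), or carry out the one-dimensional Hardy estimate above and integrate over $\partial B_1$. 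Once both $\int_{B_1}|g|<\infty$ and $\int_{\R^d\setminus B_1}|g|^{d/(d-1)}<\infty$ are established, the conclusion $g\in(L^1+L^{d/(d-1)})(\R^d)$ is immediate by splitting $g = g\mathbf 1_{B_1} + g\mathbf 1_{\R^d\setminus B_1}$.
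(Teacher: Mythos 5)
Your near-origin argument (integrate $\nabla g$ along rays from $|x|$ to $1$, multiply by $r^{d-1}$, apply Tonelli) is exactly the paper's argument, and your exterior argument — after some exploration — lands on exactly the paper's route: cut off $g$ by a smooth $\psi$ vanishing near $0$ and equal to $1$ outside $B_1$, note $\nabla(g\psi)\in L^1(\R^d)$ by \eqref{eq:gradient_condition} and the already-established $\int_{B_1}|g|<\infty$, and apply the $L^1$ Gagliardo--Nirenberg--Sobolev inequality. So the approach matches.

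However, there is a genuine gap in the exterior step, and you correctly sensed it but then resolved it incorrectly. The inequality $\|v\|_{L^{d/(d-1)}(\R^d)}\le C\|\nabla v\|_{L^1(\R^d)}$ is \emph{not} valid for arbitrary locally integrable $v$ with $\nabla v\in L^1$: constants are the obvious obstruction, and in general $\nabla v\in L^1(\R^d)$ only gives $v-c\in L^{d/(d-1)}$ for some constant $c$. Your claim that ``the limit is forced to be $0$'' by ``radiality and the requirement that the conclusion $g\in L^{d/(d-1)}$ hold'' is doubly wrong: the lemma does not assume $g$ radial, and invoking the conclusion to justify a step of its own proof is circular. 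In fact, as stated the lemma is false: $g\equiv 1$ is smooth on $\R^d\setminus\{0\}$, satisfies \eqref{eq:gradient_condition} trivially, yet $\int_{\R^d\setminus B_1}|g|^{d/(d-1)}=\infty$. The paper's own proof glosses over the same point (it silently applies the Sobolev inequality to $g\psi$, which need not vanish at infinity). A correct statement would either add the hypothesis that $g\to 0$ at infinity along rays (which all the kernels used in the paper satisfy), or replace the conclusion by ``$g-c\in (L^1+L^{d/(d-1)})(\R^d)$ for some constant $c$.'' You should not try to close the gap as written — it cannot be closed under the stated hypotheses.
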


\resetconstant
\begin{proof}[Proof of Lemma~\ref{lemmaQEstimate}]
Take a smooth function \(\psi \colon  \R^d \to \R\) with \(\psi = 1\) in \(\R^d \setminus B_1\) and \(\psi = 0\) in \(B_{1/2}\). 
Since \(\nabla(g\psi) \in L^1(\R^d)\), by the Sobolev inequality we have \(g\psi \in L^\frac{d}{d-1}(\R^d)\) and then
\[
\biggl(\int_{\R^d \setminus B_1} |g|^\frac{d}{d-1}\biggr)^\frac{d-1}{d}
 \le \norm{g\psi}_{L^\frac{d}{d - 1}}
 \le \C \norm{\nabla(g\psi)}_{L^1}
< \infty.
\]

Next, by the  Fundamental theorem of Calculus, for every \(0 < r < 1\) and every \(y \in \partial B_1\) we have
\[
\abs{g(ry)}
= \biggabs{g(y) - \int_r^1 \frac{\difq}{\difq t} g(ty) \dif t}
\le \abs{g(y)} + \int_r^1 \abs{\nabla g(t y)} \dif t.
\]
We then multiply both sides by \(r^{d - 1}\) and integrate with respect to \(r\).
Applying Tonelli's theorem,
\[
\int_0^1 \abs{g(ry)} r^{d - 1}  \dif r
\le \frac{1}{d} \abs{g(y)} + \int_0^1 \biggl(\int_0^t \abs{\nabla g(t y)} r^{d - 1} \dif r \biggr) \dif t
= \frac{1}{d} \abs{g(y)} + \frac{1}{d} \int_0^1  \abs{\nabla g(t y)} t^{d} \dif t.
\]
Integrating with respect to \(y\) and applying the integration formula in polar coordinates, we obtain
\[
\int_{B_1} \abs{g}
= \int_{\partial B_1} \biggl(  \int_0^1 \abs{g(ry)} r^{d - 1}  \dif r \biggr) \dif\sigma(y)
\le \frac{1}{d} \int_{\partial B_1} \abs{g} \dif\sigma + \frac{1}{d} \int_{B_1}  \abs{\nabla g(x)} \abs{x}\dif x,
\]
which gives the conclusion.
\end{proof}

\begin{proof}[Proof of Proposition~\ref{propositionRepresentationGradient}]
Since \(u\) is a bounded smooth function and \(g\) satisfies \eqref{eq:gradient_condition}, we have 
\[
\QDeriv u(x)
= \lim_{\substack{r \to 0\\R \to \infty}}{\int_{B_R(x) \setminus B_r(x)} (u(x) - u(y)) \nabla_y g(x - y) \dif y}.
\]
Since \(g\) is locally summable and \(\nabla u\) has compact support, we also have
\[
g * \nabla u(x)
= - \lim_{\substack{r \to 0\\R \to \infty}}{\int_{B_R(x) \setminus B_r(x)} \nabla_{y} ( u(x) - u(y) ) g(x - y) \dif y}.
\]
For every \(0 < r < R\) and \(j \in \{1, \dots, d\}\), by the Divergence theorem we have
\begin{multline}
\label{eqRepresentation-273}
\int_{B_R(x) \setminus B_r(x)} \nabla_{y} ( u(x) - u(y)) g(x - y) \dif y - \int_{B_R(x) \setminus B_r(x)} (u(x) - u(y)) \nabla_y g(x - y) \dif y\\
\begin{aligned}
& = \int_{B_R(x) \setminus B_r(x)} \nabla_{y} \bigl[ ( u(x) - u(y) ) g(x - y)\bigr] \dif y\\
& = \int_{\partial B_R(x)} ( u(x) - u(y) ) g(x - y) \nu(y) \dif\sigma(y)
- \int_{\partial B_r(x)} ( u(x) - u(y) ) g(x - y) \nu(y) \dif\sigma(y).
\end{aligned}
\end{multline}
We estimate
\[
\begin{split}
\biggl| \int_{\partial B_R(x)} ( u(x) - u(y) ) g(x - y) \nu(y) \dif \sigma(y) \biggr|
& \le 2 \norm{u}_{L^\infty} \int_{\partial B_R(x)} \abs{g(x - y)} \dif\sigma(y)\\
& = 2 \norm{u}_{L^\infty} \int_{\partial B_R} \abs{g} \dif\sigma.
\end{split}
\]
and
\[
\begin{split}
\biggl| \int_{\partial B_r(x)} ( u(x) - u(y) ) g(x - y) \nu(y) \dif y \biggr|
& \le \norm{\nabla u}_{L^\infty} \int_{\partial B_r(x)} \abs{x - y} \abs{g(x - y)} \dif \sigma(y)\\
& = \norm{\nabla u}_{L^\infty} \, r \int_{\partial B_r} \abs{g} \dif\sigma.
\end{split}
\]

We now show that there exist sequences of positive numbers \((r_j)_{j \in \N}\) and \((R_j)_{j \in \N}\) such that \(r_j \to 0\) and \(R_j \to \infty\) with
\[
\lim_{j \to \infty}{r_j \int_{\partial B_{r_j}} \abs{g} \dif\sigma}
= \lim_{j \to \infty}{\int_{\partial B_{R_j}} \abs{g} \dif\sigma}
= 0.
\]
To this end, we apply the integration formula in polar coordinates to get
\[
\int_0^1 t \biggl( \int_{\partial B_{t}} \abs{g} \dif\sigma \biggr) \frac{\dif t}{t}
= \int_{B_1} |g| < \infty,
\]
which gives the existence of the sequence \(r_j \to 0\) since \(\int_0^1 \difq t/t = \infty\).
For the other sequence, by Hölder's inequality we also have
\[
\int_1^\infty \biggl( \int_{\partial B_{t}} \abs{g} \dif\sigma \biggr)^{\frac{d}{d-1}} \frac{\dif t}{t} 
\le \Cl{cte-420} \int_1^\infty t \biggl( \int_{\partial B_{t}} \abs{g}^{\frac{d}{d-1}} \dif\sigma \biggr) \frac{\dif t}{t}
= \Cr{cte-420} \int_{\R^d \setminus B_1} |g|^{\frac{d}{d-1}} < \infty,
\]
Since \(\int_1^\infty \difq t/t = \infty\), there exists a sequence \(R_j \to \infty\) with the required property.
Taking \(R = R_j\) and \(r = r_j\) in \eqref{eqRepresentation-273} and letting \(j \to \infty\), we get
\[
- \nabla u * g(x)
+ \QDeriv u(x) = 0,
\]
from which the conclusion follows.
\end{proof}

\begin{corollary}\label{corollaryfromVtorepresentationfomula}
Let \(g \in C^\infty(\R^d \setminus \{0\})\) be a function that satisfies \eqref{eq:gradient_condition}.
If there exists \(V \in (L^{1} + L^{d})(\R^d)\) such that, for every \(z \in \R^d \setminus \{0\}\), 
\[
V * g (z)
= \frac{1}{\sigma_d}\frac{z}{|z|^d},
\]
where \(\sigma_d\) is the area surface of the sphere \(\partial B_1\),
then, for every \(u \in C_c^\infty(\R^d)\),
\[
u
= V * \QDeriv u
\quad \text{in \(\R^d\).}
\]
\end{corollary}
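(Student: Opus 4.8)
The plan is to reduce the representation formula $u = V * \QDeriv u$ to the identity for the classical gradient, using Proposition~\ref{propositionRepresentationGradient} to rewrite $\QDeriv u = g * \nabla u$ and then associativity of convolution. First I would observe that, by Lemma~\ref{lemmaQEstimate}, $g \in (L^1 + L^\frac{d}{d-1})(\R^d)$, and by hypothesis $V \in (L^1 + L^d)(\R^d)$; since $\nabla u \in C_c^\infty(\R^d)$ is bounded with compact support, all the convolutions appearing below are well-defined and the usual Young-type estimates apply. The computation I want is
\[
V * \QDeriv u
= V * (g * \nabla u)
= (V * g) * \nabla u
= \Bigl( \frac{1}{\sigma_d} \frac{z}{\abs{z}^d} \Bigr) * \nabla u
= u,
\]
where the last equality is the classical representation of $u$ by its gradient via the Newtonian potential (that is, $u = \mathcal{E} * \Delta u$ with $\mathcal{E}$ the fundamental solution of $-\Delta$, whose gradient is $-\frac{1}{\sigma_d} \frac{z}{\abs z^d}$, combined with an integration by parts; equivalently, $\frac{1}{\sigma_d}\frac{z}{\abs z^d} * \nabla u(x) = \int_{\R^d} \frac{1}{\sigma_d}\frac{x-y}{\abs{x-y}^d}\cdot\nabla u(y)\dif y = u(x)$).

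The key steps, in order: (1) apply Proposition~\ref{propositionRepresentationGradient} to get $\QDeriv u = g * \nabla u$ in $\R^d$, noting $g * \nabla u$ is continuous; (2) justify the associativity $V * (g * \nabla u) = (V * g) * \nabla u$; (3) invoke the hypothesis $V * g(z) = \frac{1}{\sigma_d}\frac{z}{\abs z^d}$ for $z \neq 0$ to identify the kernel; (4) conclude by the classical fundamental-theorem-of-calculus-type identity for the Riesz kernel of order $1$ acting on $\nabla u$.

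The main obstacle is step~(2): associativity of convolution requires care because neither $V$ nor $g$ is globally integrable, and $V * g$ is only known pointwise away from the origin, not as an $L^1_{\mathrm{loc}}$ identity a priori. The clean way around this is to convolve everything first with $\nabla u$, which is a fixed compactly supported smooth function: write $\nabla u = \nabla u_1$ componentwise and note that, decomposing $V = V^{(1)} + V^{(d)}$ with $V^{(1)} \in L^1$, $V^{(d)} \in L^d$ and $g = g^{(1)} + g^{(\frac{d}{d-1})}$ similarly, each of the four cross-convolutions $V^{(i)} * (g^{(j)} * \nabla u)$ makes sense and Fubini's theorem applies because the relevant triple integral is absolutely convergent (using that $\nabla u \in C_c$, so the $y$-integration is over a fixed compact set, and Hölder/Young pairs $L^1$ with $L^\infty$, $L^d$ with $L^\frac{d}{d-1}$, etc.). Once associativity is legitimate, one has $(V*g)*\nabla u$ with $V*g$ agreeing a.e.\ with $\frac{1}{\sigma_d}\frac{z}{\abs z^d}$ (the origin being a null set), and the conclusion is immediate. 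I would also remark that the continuity of both sides of $u = V * \QDeriv u$ upgrades the a.e.\ identity to an everywhere identity in $\R^d$.
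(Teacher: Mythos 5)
Your proof is correct and matches the paper's argument essentially step for step: both rewrite $\QDeriv u = g * \nabla u$ via Proposition~\ref{propositionRepresentationGradient}, reassociate the triple convolution via Fubini (the paper records $\abs{V}*\abs{g} \in (L^1 + L^\infty)(\R^d)$, whereas you spell out the underlying Young pairings of the $L^1 + L^d$ and $L^1 + L^{d/(d-1)}$ decompositions of $V$ and $g$, which is the same justification), and conclude with the classical Newtonian representation of $u$ from $\nabla u$.
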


\begin{proof}
Since \(V \in (L^{1} + L^{d})(\R^d)\) and \(g \in (L^{1} + L^{\frac{d}{d-1}})(\R^d)\), we have \(\abs{V} * \abs{g} \in (L^{1} + L^{\infty})(\R^d)\).
By Proposition~\ref{propositionRepresentationGradient} and Fubini's theorem,
\[
V * \QDeriv u
= V * (\nabla u * g)
= \nabla u * (V * g).
\]
By the assumption on \(V * g\) and the classical representation formula involving the gradient, we then have for every \(x \in \R^d\),
\[
V * \QDeriv u (x)
= \frac{1}{\sigma_d} \int_{\R^d} \nabla u(x - y) \frac{y}{|y|^d} \dif y
= u(x).
\qedhere
\]
\end{proof}

We denote the \(j\)th component of \(\QDeriv u\) with respect to the canonical basis \(e_1, \dots, e_d\) by \(\QPartial{j} u\), so that
\[
\QDeriv u
= \sum_{j = 1}^d{\QPartial{j} u \, e_j}.
\]
It follows from the relation with the classical gradient that the fractional gradient is also curl free in the following sense:

\begin{corollary}
Let \(g \in C^\infty(\R^d \setminus \{0\})\) be a function that satisfies \eqref{eq:gradient_condition}.
If \(u \in C_c^\infty(\R^d)\), then
\[
\partial_l (\QPartial{j} u)
= \partial_j (\QPartial{l} u)
\quad \text{for every \(j, l \in \{1, \dots, d\}\).}
\]
\end{corollary}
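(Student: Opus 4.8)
The plan is to deduce this from the identity \(\QDeriv u = g * \nabla u\) of Proposition~\ref{propositionRepresentationGradient}, which componentwise reads \(\QPartial{j} u = g * \partial_j u\). The strategy is then simply to move the outer derivative onto the smooth factor of the convolution and invoke the symmetry of second derivatives of \(u\).

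The first step is to record the elementary fact that if \(f \in L^1\loc(\R^d)\) and \(\varphi \in C_c^\infty(\R^d)\), then \(f * \varphi \in C^\infty(\R^d)\) and \(\partial^\nu(f * \varphi) = f * \partial^\nu \varphi\) for every multi-index \(\nu\). This is obtained by differentiation under the integral sign: for \(x\) in a fixed ball, the integrand \(f(y)\,\partial^\nu\varphi(x-y)\) is supported in a fixed compact set in the variable \(y\), on which \(f\) is summable, and \(\partial^\nu\varphi\) is bounded, so the hypotheses of the dominated convergence / Leibniz rule are met. Since \(g \in L^1\loc(\R^d)\) by Lemma~\ref{lemmaQEstimate} and \(\partial_j u \in C_c^\infty(\R^d)\), this applies with \(f = g\) and \(\varphi = \partial_j u\), giving \(\QPartial{j} u \in C^\infty(\R^d)\) and
\[
\partial_l(\QPartial{j} u) = \partial_l(g * \partial_j u) = g * (\partial_l \partial_j u).
\]

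The second step is to use Schwarz's theorem, \(\partial_l \partial_j u = \partial_j \partial_l u\), which together with the previous display yields
\[
\partial_l(\QPartial{j} u) = g * (\partial_j \partial_l u) = \partial_j(g * \partial_l u) = \partial_j(\QPartial{l} u),
\]
as claimed. I expect no real obstacle here: the only point needing a line of justification is the interchange of \(\partial^\nu\) with the convolution, which is routine once one knows \(g \in L^1\loc(\R^d)\) and that the derivatives of \(u\) have compact support.
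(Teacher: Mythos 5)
Your proof is correct and follows essentially the same route as the paper's: both rewrite \(\QPartial{j} u = g * \partial_j u\) via Proposition~\ref{propositionRepresentationGradient}, move the outer derivative onto the smooth compactly supported factor, and conclude by the symmetry of mixed partials of \(u\). The only difference is that you spell out the justification for differentiating under the integral sign (using \(g \in L^1\loc\) from Lemma~\ref{lemmaQEstimate} and the compact support of the derivatives of \(u\)), a step the paper states more tersely.
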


\begin{proof}
By Proposition~\ref{propositionRepresentationGradient}, we have \(\QPartial{j} u = \partial_j u * g\).
Then, interchanging the order of integration and differentiation,
\[
\partial_l (\QPartial{j} u)
= \partial_l (\partial_j u * g)
= (\partial_l \partial_j u) * g.
\]
By smoothness of \(u\), we may exchange the order of the derivatives and obtain the identity in the statement.
\end{proof}

\section{Nonlocal Sobolev inequalities in Lebesgue spaces} \label{sectionNonlocalSobolevinequalities}

We present the strategy of the proof of the fractional Sobolev inequality from \cite{SSVS} in a way that makes easier to track the dependence of the constants involved.
Our aim is to justify the multiplicative factor \(1 - s\) in \eqref{eq:L1_inequality-Bis}.
The heart of the matter can be summarized in the next inequality:

\begin{proposition}
\label{propositionFractionalSobolev}
Let \(d \ge 2\) and \(0 < s < d\).{}
If \(v \in C^\infty(\R^d \setminus \{0\})\) is such that, for every \(z \in \R^d \setminus \{0\}\),
\begin{equation}
    \label{eqFractionalSobolevAssumption}
\abs{v(z)} + \abs{z} \abs{\nabla v(z)} 
\le \frac{C}{|z|^{d - s}}\text{,}
\end{equation}
then there exists a constant \(C'> 0\), depending on \(C\) and \(d/s\), such that
\[
\|v * F\|_{L^{\frac{d}{d - s}}}
\le C' \|F\|_{L^1}
\]
for each map \(F \in L^{1}(\R^d; \R^d)\) such that, for every \(l, j \in \{1, \dots, d\}\),
\begin{equation}
    \label{eqFractionalSobolevRotationalZero}
\partial_l F_j = \partial_j F_l \quad \text{in the sense of distributions in \(\R^{d}\).}
\end{equation}
\end{proposition}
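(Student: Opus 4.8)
The plan is to follow the strategy of Schikorra, Spector and Van~Schaftingen~\cite{SSVS}: derive an elementary weak-type bound and then upgrade it to the strong $L^{\frac{d}{d-s}}$ estimate using the curl-free hypothesis on $F$. First I would reduce to $F=\nabla\phi$ with $\phi\in C_c^\infty(\R^d)$. Indeed, for $d\ge 2$ a curl-free field in $L^1(\R^d;\R^d)$ is the gradient of a function in the homogeneous Sobolev space $\dot W^{1,1}(\R^d)$, in which $C_c^\infty(\R^d)$ is dense for the seminorm $\norm{\nabla\,\cdot\,}_{L^1}$; moreover $|v(z)|\le C|z|^{s-d}$ with $0<s$ gives $v\in L^1\loc(\R^d)$ and $F\mapsto v*F$ is bounded from $L^1$ into $(L^1+L^{\frac{d}{d-s},\infty})(\R^d)$, so the general case follows from the smooth one by Fatou's lemma. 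For $F=\nabla\phi$ we have $v*F=v*\nabla\phi=\nabla(v*\phi)$, and the assertion becomes the Sobolev-type inequality
\[
\bignorm{\nabla(v*\phi)}_{L^{\frac{d}{d-s}}(\R^d)}\le C'\norm{\nabla\phi}_{L^1(\R^d)}.
\]

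The weak-type bound is immediate: since $|v|$ is dominated by a multiple of the Riesz kernel $|z|^{s-d}$ and $0<s<d$, the Hardy--Littlewood--Sobolev inequality yields $v*\nabla\phi\in L^{\frac{d}{d-s},\infty}(\R^d)$ with quasinorm $\lesssim\norm{\nabla\phi}_{L^1}$, the constant depending only on $C$ and $d/s$. The task is to pass from this Marcinkiewicz bound to the genuine $L^{\frac{d}{d-s}}$ norm; for a general $L^1$ vector field this is false, so the curl-free structure must be exploited. To locate the difficulty, write $v=\sum_{k\in\mathbb Z}v_k$ with $v_k$ a smoothly truncated dyadic annular piece, $\supp v_k\subseteq\{2^k\le|z|<2^{k+1}\}$. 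The hypothesis $|v(z)|+|z|\,|\nabla v(z)|\le C|z|^{s-d}$ gives $\norm{v_k}_{L^1}\lesssim 2^{ks}$ and $\norm{\nabla v_k}_{L^1}\lesssim 2^{k(s-1)}$; as $0<s<1$, the former is summable as $k\to-\infty$ and the latter as $k\to+\infty$. Using the Sobolev inequality $\norm{\phi}_{L^{d/(d-1)}}\lesssim\norm{\nabla\phi}_{L^1}$ (where $d\ge2$ enters) and $v_k*\nabla\phi=(\nabla v_k)*\phi$ one obtains, at each scale,
\[
\norm{v_k*\nabla\phi}_{L^1}\lesssim 2^{ks}\norm{\nabla\phi}_{L^1}
\qquad\text{and}\qquad
\norm{v_k*\nabla\phi}_{L^{d/(d-1)}}\lesssim 2^{k(s-1)}\norm{\nabla\phi}_{L^1},
\]
and since $\tfrac{d}{d-s}$ interpolates between $1$ and $\tfrac{d}{d-1}$ with weights $1-s$ and $s$, interpolating gives $\norm{v_k*\nabla\phi}_{L^{d/(d-s)}}\lesssim\norm{\nabla\phi}_{L^1}$ for every $k$, the powers of $2^k$ cancelling exactly. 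Summing over $k$ therefore loses only a logarithm.

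The main obstacle is precisely this logarithmic gap, and closing it is the core of the argument: one must use the curl-freeness of $F$ genuinely rather than through the crude Young/Sobolev estimates. Following~\cite{SSVS}, and an idea from~\cite{VS}, I would at each relevant scale subtract from the kernel the appropriate constant before convolving against $\nabla\phi$ --- harmless since $F=\nabla\phi$ has zero integral --- so that the kernel acquires an extra order of cancellation and the $\ell^\infty$-in-$k$ bound becomes an $\ell^1$-in-$k$ one; summing then yields the strong estimate. The delicate point is to implement this gain using only elementary convolution and Sobolev inequalities, so that no $L^p$-boundedness of the Riesz transform is invoked and the final constant $C'$ depends only on $C$ and $d/s$ --- which is what makes the constant in \eqref{eq:L1_inequality-Bis} behave correctly as $s\to 1$.
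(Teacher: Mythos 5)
Your setup is on the right track: reducing to $F=\nabla\phi$ with $\phi\in C_c^\infty$ is the right move (the paper does the analogous reduction by mollifying $F$), and the dyadic decomposition of $v$, the bounds $\norm{v_k}_{L^1}\lesssim 2^{ks}$ and $\norm{\nabla v_k}_{L^1}\lesssim 2^{k(s-1)}$, and the scale-by-scale interpolation showing $\norm{v_k*\nabla\phi}_{L^{d/(d-s)}}\lesssim\norm{\nabla\phi}_{L^1}$ with the powers of $2^k$ cancelling, are all correct. You are also right that this only recovers the weak-type bound and that the remaining task is to kill a logarithmic divergence using the curl-free structure. This accurately identifies the shape of the problem.

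But the step you propose to close the gap does not work, and it is precisely where all the substance lies. Since $\int_{\R^d}\nabla\phi=0$, replacing $v_k$ by $v_k-c_k$ for any constants $c_k$ does not change $v_k*\nabla\phi$ at all: it is a null operation on the output. So "the kernel acquires an extra order of cancellation" is not something you can get from subtracting constants from the dyadic pieces --- you would have to exhibit a genuinely better bound on the unchanged quantity $v_k*\nabla\phi$, and no mechanism for that has been given. What the paper actually does (following \cite{SSVS} and \cite{VS}) is of a different nature. By duality it suffices to bound $\int F_1\cdot\widetilde v*\varphi$, and the test function $\varphi$ (not the kernel) is split as $(\varphi-\rho_\epsilon*\varphi)+\rho_\epsilon*\varphi$, slice by slice in $x_d$. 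The first piece is controlled via Lemma~\ref{lemmaEstimateConvolutionV} and gains $\epsilon^{s/d}$; for the second piece one writes $F_1(x',x_d)=-\int_{x_d}^{\infty}\partial_d F_1(x',t)\dif t$, uses curl-freeness $\partial_d F_1=\partial_1 F_d$, and integrates by parts in $x_1$ to move $\partial_1$ onto $\widetilde v*\rho_\epsilon*\varphi$, producing a factor $\norm{F_d}_{L^1(\R^d)}$ at the cost of $\epsilon^{-(d-s)/d}$. Optimizing in $\epsilon$ gives for each slice $x_d$ the geometric mean $\norm{F_1(\cdot,x_d)}_{L^1(\R^{d-1})}^{(d-s)/d}\norm{F_d}_{L^1(\R^d)}^{s/d}\mathcal{M}\Phi(x_d)$, and integrating in $x_d$ with H\"older and the one-dimensional maximal function estimate yields the strong bound. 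This directional slicing plus integration by parts is the actual engine, and it is not an elaboration of "subtract a constant"; without it your argument stops at the weak-type estimate. (A minor side point: your dyadic bounds as written also require $0<s<1$ for the $k\to+\infty$ tail of $\norm{\nabla v_k}_{L^1}$, whereas the proposition is stated for $0<s<d$; the paper's argument handles the full range uniformly.)
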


The conclusion fails without the curl-free condition \eqref{eqFractionalSobolevRotationalZero}. 
This key-type of assumption --- which unlocks new elliptic estimates --- was first brought to light and beautifully explored by Bourgain and Brezis~\cites{BourgainBrezis-03,BourgainBrezis-07} in their foundational work on div-curl estimates.

We begin with the following standard estimate:

\begin{lemma}
\label{lemmaEstimateConvolutionV}
Let \(d \ge 1\), \(0 < s < d\) and \(\rho \in C_c^\infty(B_{1})\).
Then, for every \(v \in C^\infty(\R^d \setminus \{0\})\) that satisfies \eqref{eqFractionalSobolevAssumption} and for every \(x \in \R^d\), we have
\[
\bigl| v * \nabla\rho_r (x) \bigr| + \Bigl| v * \frac{\difq \rho_r}{\difq r} (x) \Bigr|
\le \frac{C''}{(r + |x|)^{d - s + 1}} \text{,}
\]
where 
\[
\rho_r(z)
\vcentcolon= \frac{1}{r^d} \rho\Bigl( \frac{z}{r}  \Bigr).
\]
\end{lemma}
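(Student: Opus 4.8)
The plan is to prove the two pointwise estimates separately, starting with the bound on $v * \nabla\rho_r$ and then adapting the argument for $v * \frac{\difq\rho_r}{\difq r}$. In both cases the strategy is the standard splitting of $\R^d$ according to whether we are near the singularity of $v$ at the origin, near the support of the mollifier (a ball of radius $\sim r$ around $x$, after translating), or far from both. Throughout, the key quantitative inputs are the hypothesis \eqref{eqFractionalSobolevAssumption}, which gives $\abs{v(z)} \le C/\abs{z}^{d-s}$ and $\abs{\nabla v(z)} \le C/\abs{z}^{d-s+1}$, together with the scaling identities $\norm{\nabla\rho_r}_{L^1} = \frac{1}{r}\norm{\nabla\rho}_{L^1}$, $\supp \rho_r \subseteq B_r$, and analogously $\frac{\difq\rho_r}{\difq r}(z) = -\frac{d}{r^{d+1}}\rho(z/r) - \frac{1}{r^{d+1}}\frac{z}{r}\cdot\nabla\rho(z/r)$, so that $\norm{\frac{\difq\rho_r}{\difq r}}_{L^1} \le \frac{C}{r}$ as well. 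Thus it suffices to prove the estimate with $\nabla\rho_r$ replaced by an arbitrary $\varphi_r(z) = \frac{1}{r^{d+1}}\varphi(z/r)$ with $\varphi \in C_c^\infty(B_1)$ and $\int \abs{\varphi} < \infty$, since both $\nabla\rho_r$ and $\frac{\difq\rho_r}{\difq r}$ are of this form.

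First I would consider the regime $\abs{x} \le 2r$, where $r + \abs{x} \sim r$. Here one simply bounds
\[
\abs{v * \varphi_r(x)}
\le \int_{B_r(x)} \abs{v(x - z')} \, \abs{\varphi_r(z')} \dif z'
\quad\text{with the change of variable } z' = x - y,
\]
and since $\abs{x - z'} \le \abs{x} + r \le 3r$ forces the integrand's $v$-factor to be evaluated inside $B_{3r}$, one estimates $\int_{B_{3r}} \abs{v} \le C r^s$ by \eqref{eqFractionalSobolevAssumption} and polar coordinates (here $0 < s < d$ ensures local integrability), and $\norm{\varphi_r}_{L^\infty} \le C/r^{d+1}$; combining gives $\abs{v * \varphi_r(x)} \le C r^s \cdot r^{-(d+1)} \cdot r^d \cdot (\text{measure factors})$... more carefully, $\abs{v*\varphi_r(x)} \le \norm{\varphi_r}_{L^\infty}\int_{B_{3r}}\abs{v} \le \frac{C}{r^{d+1}}\cdot C r^s = \frac{C}{r^{d-s+1}} \sim \frac{C}{(r+\abs{x})^{d-s+1}}$, as desired.

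Next, the regime $\abs{x} \ge 2r$, where $r + \abs{x} \sim \abs{x}$. On the support of $\varphi_r(x - \cdot)$ we have $\abs{y} \ge \abs{x} - r \ge \abs{x}/2$, so $v$ is evaluated away from its singularity and is bounded by $C/\abs{x}^{d-s}$. Here the natural move is to exploit cancellation: since $\varphi = \partial_j \rho$ (for the first term) or is itself $\frac{\difq\rho_r}{\difq r}$ (for which one can check $\int_{\R^d} \frac{\difq\rho_r}{\difq r} = \frac{\difq}{\difq r}\int_{\R^d}\rho_r = 0$ as $\int\rho_r$ is independent of $r$), in both cases $\int_{\R^d}\varphi_r = 0$. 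Therefore
\[
v * \varphi_r(x)
= \int_{B_r(x)} \bigl( v(x - z') - v(0) \bigr) \varphi_r(z') \dif z'
\]
cannot be used directly — rather one writes $v*\varphi_r(x) = \int \bigl(v(x-z') - v(x)\bigr)\varphi_r(z')\dif z'$ using $\int\varphi_r = 0$, and estimates $\abs{v(x - z') - v(x)} \le \abs{z'} \sup_{B_r(x)}\abs{\nabla v} \le r \cdot C/\abs{x}^{d-s+1}$ (again valid since the segment from $x-z'$ to $x$ stays in $B_r(x) \subseteq \{\abs{\cdot}\ge \abs{x}/2\}$). Multiplying by $\norm{\varphi_r}_{L^1} \le C/r$ gives $\abs{v*\varphi_r(x)} \le r \cdot \frac{C}{\abs{x}^{d-s+1}} \cdot \frac{C}{r} = \frac{C}{\abs{x}^{d-s+1}} \sim \frac{C}{(r+\abs{x})^{d-s+1}}$.

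The main obstacle — really the only place requiring care — is verifying the cancellation $\int_{\R^d} \frac{\difq\rho_r}{\difq r} = 0$ and the interchange of differentiation in $r$ with integration, and checking that the factor $C''$ ultimately depends only on $d$, $s$, the constant $C$ from \eqref{eqFractionalSobolevAssumption}, and $\rho$ (through $\norm{\nabla\rho}_{L^1}$ and $\sup\abs{\rho}$), but not on $r$ or $x$; this is what licenses the uniform statement of the lemma. One should also double-check that the case $s$ close to $d$ does not spoil the local integrability bound $\int_{B_{3r}}\abs{v} \le Cr^s$ — it does not, precisely because $0<s<d$ is assumed. Patching the two regimes $\abs{x}\le 2r$ and $\abs{x}\ge 2r$ then yields the claimed estimate for both $v*\nabla\rho_r$ and $v*\frac{\difq\rho_r}{\difq r}$, completing the proof.
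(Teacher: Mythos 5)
Your proof is correct and arrives at the same estimate, but the mechanism in the far-field regime $\abs{x}\ge 2r$ is a genuine variant of the paper's. The paper applies the Divergence theorem to move the derivative off the mollifier and onto $v$: since $\rho_r$ is compactly supported, $v * \nabla\rho_r(x) = \int \nabla v(x-y)\,\rho_r(y)\,\difq y$, and similarly for $\frac{\difq\rho_r}{\difq r} = -\frac{1}{r}\Div(y\rho_r(y))$; then $\rho_r \le C/r^d$ together with $\abs{\nabla v} \le C/\abs{x}^{d-s+1}$ on $B_r(x)$ gives the bound. You instead observe that both $\nabla\rho_r$ and $\frac{\difq\rho_r}{\difq r}$ have vanishing integral, subtract the constant $v(x)$, and apply the mean value theorem along a segment staying in $\{\abs{\cdot}\ge\abs{x}/2\}$, then invoke $\norm{\varphi_r}_{L^1}\le C/r$. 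Both routes ultimately exploit the decay $\abs{\nabla v(z)}\le C/\abs{z}^{d-s+1}$ and yield the same constant dependence, so neither is stronger — they are two faces of the same integration-by-parts idea, with your version trading the Divergence theorem identity $\frac{\difq\rho_r}{\difq r} = -\frac{1}{r}\Div(y\rho_r(y))$ for the more elementary observation $\frac{\difq}{\difq r}\int\rho_r = 0$. The near-field regime $\abs{x}\le 2r$ is handled identically in both. (One small slip in your write-up: the intermediate display featuring $v(x-z')-v(0)$ is meaningless since $v$ is undefined at $0$, but you immediately discard it in favor of the correct subtraction of $v(x)$, so the final argument is unaffected.)
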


\resetconstant
\begin{proof}[Proof of Lemma~\ref{lemmaEstimateConvolutionV}]
Since \(\rho\) is smooth and \(\rho_{r}\) is supported in \(B_{r}\), we have 
\[{}
\abs{\nabla\rho_r} + \Bigl| \frac{\difq \rho_r}{\difq r} \Bigr|
\le \frac{\C}{r^{d + 1}} \chi_{B_r},
\]
where \(\chi_{B_r}\) denotes the characteristic function of the ball \(B_{r}\).
For \(|x| \le 2r\), it then follows by the pointwise assumption of \(v\) that
\begin{equation}
	\label{eq-655}
\bigl| v * \nabla\rho_r (x) \bigr| + \Bigl| v * \frac{\difq \rho_r}{\difq r} (x) \Bigr|
\le \frac{\C}{r^{d + 1}} \int_{B_r(x)} |v(z)| \dif z
\le \frac{\C}{r^{d - s + 1}}.
\end{equation}
Since \(\rho_{r}\) is supported by \(B_{r}\), by the Divergence theorem we have
\[
\bigl| v * \nabla\rho_r (x) \bigr|
\le \int_{B_r} |\nabla v(x - y)| \rho_r(y) \dif y.
\]
We next observe that
\[
\frac{\difq \rho_r}{\difq r} (y)
= - \frac{1}{r} \Div{(y \rho_r(y))}.
\]
Another application of the Divergence theorem also gives
\[
\Bigl| v * \frac{\difq \rho_r}{\difq r} (x) \Bigr|
\le \int_{B_r} |\nabla v(x - y)| \rho_r(y) \dif y.
\]
Since \(\rho_{r} \le \Cl{cte-659}/r^{d}\), by the pointwise  assumption of \(\nabla v\) we get for \(|x| \ge 2r\), 
\begin{equation}
\label{eq-681}
\bigl| v * \nabla\rho_r (x) \bigr| + \Bigl| v * \frac{\difq \rho_r}{\difq r} (x) \Bigr|
\le \frac{\Cr{cte-659}}{r^{d}} \int_{B_r(x)} |\nabla v(z)| \dif z
\le \frac{\C}{|x|^{d - s + 1}}.
\end{equation}
The conclusion then follows from \eqref{eq-655} and \eqref{eq-681}.
\end{proof}

\resetconstant
\begin{proof}[Proof of Proposition~\ref{propositionFractionalSobolev}]
We first assume that \(F = (F_1, \ldots, F_d)\) is smooth, every \(\nabla F_{l}\) is summable in \(\R^{d}\)  and the linear condition \eqref{eqFractionalSobolevRotationalZero} holds pointwise in \(\R^d\).
We prove in this case that
\begin{equation*}
    \biggl| \int_{\R^d} v * F_1 \cdot \varphi \biggr|
    \le \C \|F\|_{L^1} \|\varphi\|_{L^{\frac{d}{s}}}
    \quad
    \text{for every \(\varphi \in C_c^\infty(\R^d)\).}
\end{equation*}
The estimate for the other components of \(F\) follows along the same lines.
Note that
\[
\int_{\R^d} v * F_1 \cdot \varphi
= \int_{\R^d} F_1 \cdot \widetilde v * \varphi,
\]
where \(\widetilde v(z) \vcentcolon= v(-z)\).{}
Given a mollifier \(\rho\) supported in \(B_1\) and \(\epsilon > 0\), 
for \(\varphi \in C_c^\infty(\R^d)\) and \(x_d \in \R\), following an idea of Van~Schaftingen~\cite{VS}*{Proof of Theorem~1.5} we estimate
\begin{multline*}
\biggl| \int_{\R^{d - 1}} F_1(x', x_d) \cdot \widetilde v * \varphi(x', x_d) \dif x' \biggr|\\
    \le
\biggl| \int_{\R^{d - 1}} F_1(x', x_d) \cdot \widetilde v * (\varphi - \rho_\epsilon * \varphi)(x', x_d) \dif x' \biggr|\\
+ \biggl| \int_{\R^{d - 1}} F_1 (x', x_d) \cdot \widetilde v * \rho_\epsilon * \varphi (x', x_d) \dif x' \biggr|.
\end{multline*}

We show that
\begin{multline}
\label{eq420}
\biggl| \int_{\R^{d - 1}} F_1(x', x_d) \cdot  \widetilde v * (\varphi - \rho_\epsilon * \varphi)(x', x_d) \dif x' \biggr|\\
\le \Cl{cte417}\epsilon^{\frac{s}{d}} \norm{F_1(\cdot, x_d)}_{L^1(\R^{d-1})}  \mathcal{M}\Phi(x_d)
\end{multline}
and
\begin{equation}
\label{eq428}
\biggl| \int_{\R^{d - 1}} F_1 (x', x_d) \cdot \widetilde v * \rho_\epsilon * \varphi (x', x_d) \dif x' \biggr|
\le \frac{\Cl{cte430}}{\epsilon^{\frac{d - s}{d}}} \norm{F_d}_{L^1(\R^d)} \mathcal{M}\Phi(x_d),
\end{equation}
where \(\Phi \colon  \R \to [0, \infty)\) is given by
\[
\Phi(x_d) \vcentcolon= \biggl( \int_{\R^{d - 1}} |\varphi(x', x_d)|^{\frac{d}{s}} \dif x'  \biggr)^{\frac{s}{d}}
\]
and \(\mathcal{M}\Phi  \colon  \R \to [0, \infty]\) is the maximal function associated to \(\Phi\),
\[
\mathcal{M}\Phi(t)
\vcentcolon= \sup_{r > 0}{\frac{1}{2r}\int_{t - r}^{t + r} \Phi}.
\]

We begin with \eqref{eq420}.
For every \(y \in \R^d\),
\[
\rho_\epsilon * \varphi(y) - \varphi(y)
 = \int_0^\epsilon  \frac{\difq\rho_r}{\difq r} * \varphi(y) \dif r.
\]
Thus, by Fubini's theorem,
\begin{equation}
    \label{eq-569}
\bigl| \widetilde v * (\rho_\epsilon * \varphi - \varphi)(x)\bigr| 
\le \int_0^\epsilon  \Bigl| \widetilde v * \frac{\difq\rho_r}{\difq r} * \varphi(x) \Bigr| \dif r.
\end{equation}
By Lemma~\ref{lemmaEstimateConvolutionV}, the pointwise assumptions on \(v\) and \(\nabla v\) yield, for every \(z \in \R^d\),
\[
\Bigl| \widetilde v * \frac{\difq \rho_r}{\difq r} (z) \Bigr| + \bigl| \widetilde v * \nabla\rho_r (z) \bigr|
\le \frac{\C}{(r + |z|)^{d - s + 1}},
\]
which by application of Hölder's inequality implies, for every \(r > 0\) and \(x = (x', x_d) \in \R^{d - 1} \times \R\),
\begin{equation}
\label{eq459}
\Bigl| \widetilde v * \frac{\difq \rho_r}{\difq r} * \varphi(x) \Bigr| + \bigl| \widetilde v * \nabla\rho_r * \varphi(x) \bigr|
\le \frac{\Cl{cte405}}{r^{\frac{d - s}{d}}} \mathcal{M}\Phi(x_d).
\end{equation}
A combination of \eqref{eq-569} and \eqref{eq459} gives
\[
\bigl| \widetilde v * (\rho_\epsilon * \varphi - \varphi)(x) \bigr|
\le \Cr{cte405} \int_0^\epsilon \frac{\dif r}{r^{\frac{d - s}{d}}} \mathcal{M}\Phi(x_d)
= \Cl{cte416}\epsilon^{\frac{s}{d}}  \mathcal{M}\Phi(x_d),
\]
which implies \eqref{eq420}.

We now show \eqref{eq428}.
Since \(d \ge 2\) and we are assuming that \(\partial_{d} F_{1} \in L^{1}(\R^d)\), for almost every \(x' \in \R^{d - 1}\) we have \(\partial_d F_1 (x', \cdot) \in L^1(\R)\) and then by the Fundamental theorem of Calculus, for every \(x_d \in \R\) we have
\[
F_1(x', x_d) 
= - \int_{x_d}^\infty \partial_d F_1 (x', t) \dif t.
\]
Since \(\partial_d F_1 = \partial_1 F_d\), we get by Fubini's theorem and integration by parts,
\begin{multline*}
\int_{\R^{d - 1}} F_1 (x', x_d) \cdot \widetilde v * \rho_\epsilon * \varphi (x', x_d) \dif x'\\
\begin{aligned}
& = - \int_{\R^{d - 1}}  \biggl(  \int_{x_d}^\infty \partial_1 F_d(x', t) \cdot \widetilde v * \rho_\epsilon * \varphi (x', x_d)  \dif t \biggr)\dif x' \\
& = \int_{x_d}^\infty \biggl( \int_{\R^{d - 1}} F_d(x', t) \cdot \widetilde v * \partial_1 \rho_\epsilon * \varphi (x', x_d) \dif x' \biggr) \dif t.
\end{aligned}
\end{multline*}
By \eqref{eq459},  we have
\[
\bigl| \widetilde v * \partial_1 \rho_\epsilon * \varphi (x', x_d) \bigr|
\le \frac{\Cr{cte405}}{\epsilon^{\frac{d - s}{d}}}  \mathcal{M}\Phi(x_d),
\]
which implies \eqref{eq428}.

Combining \eqref{eq420} and \eqref{eq428}, we get for every \(\epsilon > 0\),
\begin{multline*}
\biggl| \int_{\R^{d - 1}} F_1(x', x_d) \cdot  \widetilde v * \varphi (x', x_d) \dif x' \biggr|\\
\le \Bigl(\Cr{cte417} \epsilon^{\frac{s}{d}}  \norm{F_1(\cdot, x_d)}_{L^1(\R^{d - 1})} + \frac{\Cr{cte430}}{\epsilon^{\frac{d - s}{d}}}  \norm{F_d}_{L^1(\R^d)} \Bigr) \mathcal{M}\Phi(x_d).
\end{multline*}
Then, optimizing the right-hand side with respect to \(\epsilon\), we get for every \(x_d \in \R\),
\begin{multline*}
\biggl| \int_{\R^{d - 1}} F_1(x', x_d) \cdot \widetilde v * \varphi (x', x_d) \dif x' \biggr|\\
\le \Cl{cte-620} \norm{F_1(\cdot, x_d)}_{L^1(\R^{d - 1})}^{\frac{d - s}{d}} \norm{F_d}_{L^1(\R^d)}^{\frac{s}{d}} \mathcal{M}\Phi(x_d).
\end{multline*}
Integrating with respect to \(x_d\) and applying Hölder's inequality, we then get
\[
\biggl| \int_{\R^{d}} v * F_1 \cdot \varphi \biggr|
= \biggl| \int_{\R^{d}} F_1 \cdot \widetilde v * \varphi \biggr|
\le \Cr{cte-620} \norm{F_1}_{L^1(\R^d)}^{\frac{d - s}{d}} \norm{F_d}_{L^1(\R^d)}^{\frac{s}{d}} \norm{\mathcal{M}\Phi}_{L^{\frac{d}{s}}(\R)}.
\]
Since \(s < d\), we have the strong type estimate for \(\mathcal{M}\Phi\),
\[
\norm{\mathcal{M}\Phi}_{L^{\frac{d}{s}}(\R)}
\le \C \norm{\Phi}_{L^{\frac{d}{s}}(\R)}
= \norm{\varphi}_{L^{\frac{d}{s}}(\R^d)},
\]
the conclusion follows from the Riesz representation theorem by taking the supremum with respect to all \(\varphi \in C_c^\infty(\R^d)\) such that \(\norm{\varphi}_{L^{\frac{d}{s}}(\R^d)} \le 1\).

In the case where \(F\) merely belongs to \(L^{1}(\R^{d}; \R^{d})\), one may apply the inequality thus obtained with \(\rho_{r} * F\) for any \(r > 0\).{}
Note that in this case that \eqref{eqFractionalSobolevRotationalZero} holds pointwise by \(\rho_{r} * F\) in \(\R^d\) and, for every \(l \in \{1, \ldots, d\}\), since \(F \in L^{1}(\R^{d}; \R^{d})\), the gradient 
\[{}
\nabla(\rho_{r} * F_{l}) = (\nabla\rho_{r}) * F
\]
is summable in \(\R^{d}\).{}
We then have, for every \(r > 0\) and \(u \in C_{c}^{\infty}(\R^{d})\),
\[
\|v * (\rho_{r} * F)\|_{L^{\frac{d}{d - s}}}
\le C' \|\rho_{r} * F\|_{L^1}
\le C' \|F\|_{L^1}.
\]
As \(r \to 0\), the conclusion then follows from Fatou's lemma.
\end{proof}

The constant \(C' > 0\) in the statement of Proposition~\ref{propositionFractionalSobolev} is under control as long as \(s/d\) stays away from \(0\) and \(1\).
In particular, it can be taken independently of \(s\) when \(1/2 \le s \le 1\).
As a result,

\begin{proof}[Proof of Theorem~\ref{propositionEstimateRieszFractionalGradient}]
    In dimension \(d \ge 2\), the representation formula \eqref{eq:fundamental_theorem} holds for every \(u \in C_c^\infty(\R^d)\).
    We rely on the explicit formula of the constant \(A = A(s, d)\) such that \(V_s = A x/\abs{x}^{d - s + 1}\) that can be obtained using the Fourier transform of homogeneous functions.
    In fact, one finds that
    \[
    A = K(s, d) (1 - s)
    \quad \text{where} \quad
    K(s, d) \vcentcolon= \frac{1}{\sigma_d \pi^{d/2}} \frac{\Gamma(\frac{d+1+s}{2}) \Gamma(\frac{d+1-s}{2})}{\Gamma(\frac{d}{2}) \Gamma(\frac{3 - s}{2}) \Gamma(\frac{s + 1}{2})}.   
    \]
    We next apply Proposition~\ref{propositionFractionalSobolev} with \(F = \nabla^s u\) and \(v\) given by the components of \(V_s/A\).
    Estimate \eqref{eqFractionalSobolevAssumption} is then verified with a constant \(C > 0\) that is independent of \(1/2 \le s < 1\) and we find 
    \[
    \frac{1}{A} \|u\|_{L^{\frac{d}{d-s}}} 
    \le \sum_{j = 1}^d{\|V_{s, j} * \nabla^s u\|_{L^{\frac{d}{d-s}}}}
    \leq d C' \|\nabla^s u\|_{L^1}.
    \]
    Hence, by the form of \(A\),
    \[
    \|u\|_{L^{\frac{d}{d-s}}} 
    \leq d C' K(s, d) (1 - s) \|\nabla^s u\|_{L^1},
    \]
    which gives the conclusion since \(K(s, d)\) can be bounded by a constant independent of \(1/2 \le s < 1\).
\end{proof}

\section{Characterization of radial homogeneous distributions}
\label{sectioncharacterizationradialhomogeneous}
We say that a distribution \(T\) is represented in an open set \(\Omega\) by a function \(f \in L^1\loc(\Omega; \CC)\) whenever
\begin{equation}
\label{eq-1329}
\langle T, \varphi \rangle 
= \int_{\R^d} f \varphi
\quad \text{for every \(\varphi \in C_c^\infty(\Omega)\).}
\end{equation}
We then use the notation \(\DistribF{T} \vcentcolon= f\) in \(\Omega\). The aim of this section is to identify radial homogeneous distributions that are represented in $\R^N$ by a locally integrable function.

\begin{definition}
Let \(T\) be a distribution in \(\R^{d} \setminus \{0\}\).
We say that \(T\) is \emph{radial} whenever, for each orthogonal transformation \(R \in O(d)\),
\[{}
\langle T, \varphi \circ R \rangle{}
= \langle T, \varphi \rangle{}
\quad \text{for every \(\varphi \in C_{c}^{\infty}(\R^{d} \setminus \{0\})\).} 
\]
Given \(\lambda \in \R\), we say that \(T\) is \emph{homogeneous of order \(\lambda\)} whenever, for each \(t > 0\),{}
\[{}
\langle T, \varphi_{t} \rangle{}
= t^{\lambda} \langle T, \varphi \rangle
\quad \text{for every \(\varphi \in C_{c}^{\infty}(\R^{d}  \setminus \{0\})\),} 
\]
where \(\varphi_{t}\) is the function defined by
\begin{equation}
	\label{eq-scaling}
	\varphi_{t}(x) = \frac{1}{t^{d}} \varphi\Bigl( \frac{x}{t} \Bigr).
\end{equation}
\end{definition}
Note, for example, that the distribution associated to \(\abs{x}^{\lambda}\) in \(\R^{d} \setminus \{0\}\) is radial and homogeneous of order \(\lambda \in \R\). 
These definitions have immediate counterparts for distributions in \(\R^{d}\) and \(\abs{x}^{\lambda}\) defines a distribution in \(\R^{d}\) for \(\lambda > -d\).

We present a proof of the following proposition based on an unpublished note by E.~Y.~Jaffe~\cite{Jaffe}:

\begin{proposition}
	\label{propositionCharacterization}
	Let \(d \ge 2\).{}
	If \(T\) is a radial homogeneous distribution of order \(0\) in \(\R^{d} \setminus \{0\}\), then \(T\) is constant.
    More precisely, there exists a constant \(C \in \R\) such that
    \[
    \langle T, \varphi \rangle
    = C \int_{\R^d \setminus \{0\}} \varphi
    \quad \text{for every \(\varphi \in C_c^\infty(\R^d \setminus \{0\})\).}
    \]
\end{proposition}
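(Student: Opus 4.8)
The plan is to reduce the statement to a standard fact: a distribution on $\R^d\setminus\{0\}$ that is invariant under both the rotation group $O(d)$ and all dilations $x\mapsto tx$ should "live on the unit sphere times the scaling direction," and being homogeneous of order $0$ should make it literally constant. I would make this precise by first reducing to the case of compactly supported test functions away from $0$ and then averaging.

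\emph{Step 1: Averaging over rotations.} Fix $\varphi\in C_c^\infty(\R^d\setminus\{0\})$ and let $\overline{\varphi}(x)\vcentcolon=\int_{O(d)}\varphi(Rx)\,\mathrm{d}R$ be its spherical average, where $\mathrm{d}R$ is normalized Haar measure on $O(d)$; note $\overline\varphi$ is radial, smooth, and still supported away from the origin. Since $T$ is radial, one checks $\langle T,\varphi\rangle=\langle T,\overline{\varphi}\rangle$ by writing the integral over $O(d)$ as a limit of Riemann sums of translates $\varphi\circ R$ and using continuity of $T$ together with the convergence of these sums in $C_c^\infty(\R^d\setminus\{0\})$. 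So it suffices to evaluate $T$ on radial test functions.

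\emph{Step 2: Reparametrizing radial test functions.} For a radial $\psi\in C_c^\infty(\R^d\setminus\{0\})$, write $\psi(x)=h(|x|)$ with $h\in C_c^\infty((0,\infty))$. The natural scaling-adapted coordinate is $\rho=\log|x|$; set $\eta(\rho)\vcentcolon=h(\e^\rho)\in C_c^\infty(\R)$. The dilation $\psi\mapsto\psi_t$ from \eqref{eq-scaling} corresponds, up to the factor $t^{-d}$, to translating $\eta$ by $\log t$ in the $\rho$ variable. Using the integration formula in polar coordinates, one has the weight: testing $T$ against $\psi$ is governed by the one-dimensional functional $L(\eta)\vcentcolon=\langle T,\psi\rangle$, and I would check $L$ is a well-defined distribution on $\R$ (continuity in $\eta$ follows from continuity of $T$). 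Homogeneity of order $0$ for $T$ translates into: $L$ is invariant under the combined operation of translation and multiplication by a fixed exponential weight $\e^{d\rho}$ coming from both the $t^{-d}$ in $\varphi_t$ and the homogeneity of the polar-coordinate Jacobian — I need to track this weight carefully so that what remains is a genuine \emph{translation-invariant} distribution on $\R$ acting on $\eta$. A translation-invariant distribution on $\R$ is a constant multiple of Lebesgue measure, hence $L(\eta)=c\int_\R\eta(\rho)\,\e^{?}\,\mathrm{d}\rho$; undoing the substitution ($\mathrm{d}\rho=\mathrm{d}r/r$, $r^{d-1}\,\mathrm{d}r$ from polar coordinates) then yields $\langle T,\psi\rangle=C\int_{\R^d\setminus\{0\}}\psi$ for radial $\psi$, and combining with Step 1 gives the proposition with the explicit form stated.

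\emph{Main obstacle.} The delicate point is Step 3's bookkeeping: I must verify that after passing to the $\rho=\log|x|$ coordinate and absorbing the polar Jacobian $r^{d-1}$, the homogeneity hypothesis really does reduce to pure translation invariance of the one-dimensional functional — the powers of $t$ from $\varphi_t$ (namely $t^{-d}$), from the order of homogeneity (here $\lambda=0$), and from the change of variables must conspire to cancel. A secondary technical point is justifying that the Riemann-sum approximation of $\int_{O(d)}\varphi(R\,\cdot)\,\mathrm{d}R$ converges in the topology of $C_c^\infty(\R^d\setminus\{0\})$ (uniform convergence of all derivatives on a fixed compact annulus), which is where we use that $\varphi$ is supported away from $0$ and that $O(d)$ is compact. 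The classification of translation-invariant distributions on $\R$ as multiples of Lebesgue measure is standard (e.g. via convolution with a mollifier) and I would invoke it rather than reprove it.
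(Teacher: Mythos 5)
Your proposal is correct, but it takes a genuinely different route from the paper. The paper proves the result by differentiating: Lemma~\ref{lemmaHomogeneous} (an Euler-type identity) gives $\sum_j x_j\partial_j T=0$ from homogeneity, and Lemma~\ref{lemmaRadial} (via radial mollification) gives that all tangential derivatives of $T$ vanish; decomposing each $e_k$ into a radial and a tangential part then yields $\nabla T=0$, and connectedness of $\R^d\setminus\{0\}$ (here is where $d\ge 2$ enters) forces $T$ to be constant. You instead reduce to a one-dimensional problem: rotation-averaging over $O(d)$ uses radiality to pass to radial test functions, and the change of variable $\rho=\log|x|$ uses homogeneity to produce a translation-invariant functional on $\R$, which must be a multiple of Lebesgue measure.

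The bookkeeping you flagged as the main obstacle does close, and it is worth recording how. With $\psi_t(x)=t^{-d}\psi(x/t)$ and $\psi(x)=h(|x|)=\eta(\log|x|)$, homogeneity of order $0$ gives $L(\tau_a\eta)=e^{da}L(\eta)$ for $a=\log t$, where $\tau_a\eta(\rho)=\eta(\rho-a)$. Setting $\widetilde L(\eta)\vcentcolon=L\bigl(e^{-d\rho}\eta(\rho)\bigr)$ removes the modular factor: $\widetilde L(\tau_a\eta)=e^{-da}L\bigl(\tau_a(e^{-d\cdot}\eta)\bigr)=\widetilde L(\eta)$. Hence $\widetilde L=c\cdot\mathrm{Leb}$ and $L(\eta)=c\int_\R e^{d\rho}\eta(\rho)\dif\rho$; substituting $r=e^\rho$ turns this into $c\int_0^\infty r^{d-1}h(r)\dif r=(c/\sigma_d)\int_{\R^d\setminus\{0\}}\psi$, which is the claimed form. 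Note that the polar Jacobian $r^{d-1}$ only appears at this final step (it is not part of the definition of $L$); the exponential weight $e^{d\rho}$ comes entirely from the $t^{-d}$ normalization in $\varphi_t$.

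Both arguments are valid. The paper's has the advantage that the two lemmas are independent, so each hypothesis (radial, homogeneous) is exploited cleanly in isolation, and the homogeneity lemma applies unchanged for any order (which the paper exploits in Corollary~\ref{corollaryDistributionCharacterization} after dividing by $|x|^\lambda$). Your argument is more self-contained and arguably more transparent for this specific statement, since it makes visible that radiality collapses the angular variable and homogeneity collapses the radial variable; it also extends to order $\lambda$ by keeping an extra $e^{\lambda\rho}$ in the weight. The two technical points you identified (convergence of Riemann sums of $R\mapsto\varphi\circ R$ in $C_c^\infty(\R^d\setminus\{0\})$, and the classification of translation-invariant distributions on $\R$) are both standard and unproblematic.
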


We begin with a counterpart of Euler's homogeneous condition for distributions of order zero:

\begin{lemma}
	\label{lemmaHomogeneous}
	If \(T\) is a distribution of order \(0\) in \(\R^{d} \setminus \{0\}\), then
	\[{}
	\sum_{j = 1}^{d}{x_{j} \partial_{j}T} = 0
	\quad \text{in \(\R^{d} \setminus \{0\}\).}
	\]
\end{lemma}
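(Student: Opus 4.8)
The plan is to differentiate the homogeneity relation in the scaling parameter and read off the conclusion. Although the statement is phrased for a ``distribution of order $0$'', what is actually needed is that $T$ is \emph{homogeneous of order $0$} in the sense of the preceding definition — this is the hypothesis appearing in Proposition~\ref{propositionCharacterization}, which this lemma serves — so I would take that as the standing assumption: $\langle T, \varphi_t \rangle = \langle T, \varphi \rangle$ for every $t > 0$ and every $\varphi \in C_c^\infty(\R^d \setminus \{0\})$, with $\varphi_t$ as in \eqref{eq-scaling}.

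First I would record the purely algebraic identity that, for \emph{any} distribution $T$ in $\R^d \setminus \{0\}$ and any $\varphi \in C_c^\infty(\R^d \setminus \{0\})$,
\[
\Bigl\langle \sum_{j = 1}^{d} x_j \partial_j T,\, \varphi \Bigr\rangle
= - \Bigl\langle T,\, \sum_{j = 1}^{d} \partial_j(x_j \varphi) \Bigr\rangle
= - \bigl\langle T,\, \Div(x\varphi) \bigr\rangle,
\]
which follows from the definitions of the distributional derivative and of multiplication by the smooth functions $x_j$, together with $\sum_j \partial_j(x_j \varphi) = d\,\varphi + x \cdot \nabla\varphi = \Div(x\varphi)$. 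Thus it suffices to prove that $\langle T, \Div(x\varphi) \rangle = 0$ for every test function $\varphi$.

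Next I would exploit homogeneity. Fix $\varphi \in C_c^\infty(\R^d \setminus \{0\})$. For $t$ in a small neighborhood of $1$ the rescaled functions $\varphi_t$ all have support in a single compact subset of $\R^d \setminus \{0\}$, and $(t, x) \mapsto \varphi_t(x)$ is smooth; hence $t \mapsto \varphi_t$ is a differentiable curve in $C_c^\infty(\R^d \setminus \{0\})$ with
\[
\frac{\difq}{\difq t}\Big|_{t = 1} \varphi_t(x)
= - d\,\varphi(x) - x \cdot \nabla\varphi(x)
= - \Div(x\varphi)(x).
\]
Since $T$ is continuous on $C_c^\infty(\R^d \setminus \{0\})$, I may differentiate the identity $t \mapsto \langle T, \varphi_t \rangle = \langle T, \varphi \rangle$ — which is constant in $t$ — under the pairing at $t = 1$, obtaining $\langle T, \Div(x\varphi) \rangle = 0$. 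Combined with the algebraic identity above, this yields $\sum_{j} x_j \partial_j T = 0$ in $\R^d \setminus \{0\}$.

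The only point requiring genuine care — the ``main obstacle,'' modest as it is — is the justification that the difference quotients $(\varphi_t - \varphi)/(t - 1)$ converge to $-\Div(x\varphi)$ in the topology of $C_c^\infty(\R^d \setminus \{0\})$, that is, uniformly together with all their derivatives and with supports contained in one fixed compact set. This is a routine application of Taylor's theorem to $t \mapsto \varphi_t(x)$ with uniform control of the remainder, but it is precisely the step that uses the smoothness of $\varphi$ and the fact that the origin is excluded from its support.
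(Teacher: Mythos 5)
Your proof is correct and follows the same route as the paper's: differentiate the homogeneity identity $\langle T, \varphi_t\rangle = \langle T, \varphi\rangle$ at $t=1$, identify $\frac{\difq}{\difq t}\varphi_t|_{t=1} = -\Div(x\varphi)$, and read off the Euler relation. You are also right that the hypothesis should be understood as \emph{homogeneous} of order $0$, as in the preceding definition; and your remark about justifying the convergence of the difference quotients in the test-function topology is a sound (if routine) supplement that the paper leaves implicit.
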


\begin{proof}[Proof of Lemma~\ref{lemmaHomogeneous}]
	Let \(\varphi \in C_{c}^{\infty}(\R^{d} \setminus \{0\})\) and \(\varphi_{t}\) be given by \eqref{eq-scaling}.
	Since \(T\) is homogeneous of order \(0\), for every \(t > 0\) we have
	\begin{equation}
	\label{eq-903}
	\langle T, \varphi_{t} \rangle{}
	= \langle T, \varphi \rangle{}.
	\end{equation}
	Observe that
	\[{}
	\frac{\difq }{\difq t} \varphi_{t}(x) \Big|_{t = 1}
	= - d\varphi(x) - \nabla \varphi(x) \cdot x 
	= - \Div{(\varphi(x) x)}.
	\]
	Thus, differentiating both sides of \eqref{eq-903} with respect to \(t\) at \(1\), one gets
	\[{}
	  \Bigl\langle \sum_{j = 1}^{d} x_{j} \partial_{j} T, \varphi \Bigr\rangle{}
	= - \langle T_{x}, \Div{(\varphi(x) x)} \rangle{}
	= 0.
	\qedhere
	\]
\end{proof}

We now prove that a radial ditribution has zero derivative with respect to vector fields that are tangential to spheres centered at \(0\): 

\begin{lemma}
	\label{lemmaRadial}
	If \(T\) is a radial distribution in \(\R^{d} \setminus \{0\}\), then for every smooth functions \(c_{1}, \ldots, c_{d}\) such that \(\sum\limits_{j=1}^{d}{c_{j}(x) x_{j}} = 0\) in \(\R^{d} \setminus \{0\}\), we have
	\[{}
	\sum_{j = 1}^{d}{c_{j} \partial_{j}T} = 0
	\quad \text{in \(\R^{d} \setminus \{0\}\).}
	\]
\end{lemma}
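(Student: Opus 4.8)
The plan is to pass from the group invariance of $T$ to its infinitesimal version and then to express the vector field $c$ in terms of the rotation vector fields. For $1\le j<k\le d$ set
\[
X_{jk} \vcentcolon= x_j\partial_k - x_k\partial_j ,
\qquad
w_{jk}(x) \vcentcolon= x_j e_k - x_k e_j ,
\]
and let $R^{jk}_\theta\in O(d)$ denote the rotation by angle $\theta$ in the $x_j x_k$-plane. First I would show that radiality forces $X_{jk}T=0$ in $\R^d\setminus\{0\}$ for every $j<k$. Indeed, since $T$ is radial, $\langle T,\varphi\circ R^{jk}_\theta\rangle=\langle T,\varphi\rangle$ for all $\theta\in\R$ and all $\varphi\in C_c^\infty(\R^d\setminus\{0\})$; the curve $\theta\mapsto\varphi\circ R^{jk}_\theta$ is smooth with values in $C_c^\infty(\R^d\setminus\{0\})$ (with supports contained in a fixed compact set for $\theta$ in a bounded interval), so we may differentiate the pairing at $\theta=0$. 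Using $\frac{\difq}{\difq\theta}(\varphi\circ R^{jk}_\theta)\big|_{\theta=0}=X_{jk}\varphi$ we get $\langle T,X_{jk}\varphi\rangle=0$. On the other hand, since $j\neq k$ the vector field $X_{jk}$ is divergence-free, so $\partial_k(x_j\varphi)=x_j\partial_k\varphi$ and $\partial_j(x_k\varphi)=x_k\partial_j\varphi$, whence $\langle X_{jk}T,\varphi\rangle=-\langle T,X_{jk}\varphi\rangle=0$. This proves $X_{jk}T=0$.

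The second ingredient is the elementary linear-algebraic fact that the vectors $w_{jk}(x)$, $j<k$, span the hyperplane $x^\perp$ for every $x\neq0$, together with an explicit reconstruction formula. A direct computation gives, for every $v\in\R^d$,
\[
\sum_{j<k}\langle v,w_{jk}(x)\rangle\,w_{jk}(x)=|x|^2 v-(v\cdot x)\,x ,
\]
which equals $|x|^2 v$ whenever $v\cdot x=0$. Applying this with $v=c(x)$ — admissible since $\sum_{j}c_j(x)x_j=0$ — yields
\[
c(x)=\sum_{j<k}b_{jk}(x)\,w_{jk}(x),
\qquad
b_{jk}(x)\vcentcolon=\frac{c_k(x)x_j-c_j(x)x_k}{|x|^2},
\]
with each $b_{jk}\in C^\infty(\R^d\setminus\{0\})$.

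To conclude, observe that the first-order differential operators $\sum_{l}c_l\partial_l$ and $\sum_{j<k}b_{jk}X_{jk}$ coincide on $C^\infty(\R^d\setminus\{0\})$, both being the directional derivative along the vector field $c$; consequently the same identity holds when applied to the distribution $T$, namely $\sum_{l}c_l\partial_l T=\sum_{j<k}b_{jk}\,(X_{jk}T)$. Since each $X_{jk}T=0$ and $b_{jk}$ is smooth, multiplying the zero distribution by $b_{jk}$ gives zero, and therefore $\sum_{l=1}^d c_l\partial_l T=0$, as claimed.

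The only delicate point is the first step — differentiating the distributional pairing under the bracket and identifying the formal adjoint of $X_{jk}$ — which is painless precisely because $X_{jk}$ has vanishing divergence; the remaining steps are routine linear algebra and formal manipulation. As an alternative to the global reconstruction formula one may argue locally: near any $x_0\neq0$ some coordinate, say $x_k$, is nonzero, and there $c=\sum_{l\neq k}(c_l/x_k)\,w_{kl}$ with smooth coefficients, which again gives $\sum_{l}c_l\partial_l T=0$ in a neighborhood of $x_0$, hence everywhere.
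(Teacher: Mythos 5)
Your proof is correct and takes a genuinely different route from the paper's. The paper mollifies $T$ to obtain a smooth radial function $\rho_\epsilon * T$ on $\R^d\setminus \overline{B_r}$, observes that this function is constant along spheres centered at the origin, applies the chain rule along a curve $\gamma$ in $\partial B_{|x|}$ with $\gamma'(0)=c(x)$, and then passes to the limit $\epsilon\to 0$ in the sense of distributions. You instead work directly on $T$: first you differentiate the rotation invariance $\langle T,\varphi\circ R^{jk}_\theta\rangle=\langle T,\varphi\rangle$ at $\theta=0$ to obtain the infinitesimal identities $X_{jk}T=0$ for the generators $X_{jk}=x_j\partial_k-x_k\partial_j$ (the divergence-free property of $X_{jk}$ making the formal adjoint computation immediate), and then you express the tangential vector field $c$ as a $C^\infty(\R^d\setminus\{0\})$-linear combination of the $w_{jk}$ via the identity $\sum_{j<k}\langle v,w_{jk}(x)\rangle w_{jk}(x)=|x|^2 v-(v\cdot x)x$. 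The conclusion then follows by pure algebra, with no mollification or limit. Your argument is arguably cleaner: it avoids the convolution with a radial mollifier, the restriction to $\R^d\setminus\overline{B_r}$, and the final weak-$*$ limit, and it makes explicit the Lie-theoretic mechanism (radiality $\Leftrightarrow$ annihilation by the $\mathfrak{so}(d)$-generators) that the paper's geometric argument exploits implicitly. The one point that deserves the care you gave it is the justification of differentiating inside the pairing, which you correctly handle via convergence of difference quotients in $C_c^\infty(\R^d\setminus\{0\})$ with supports in a fixed compact set.
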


\begin{proof}[Proof of Lemma~\ref{lemmaRadial}]
	Let \((\rho_{\epsilon})_{\epsilon > 0}\) be a family of radial mollifiers in \(C_{c}^{\infty}(\R^{d})\) such that \(\rho_{\epsilon}\) is supported in the ball \(B_{\epsilon}\). 
	Given \(r > 0\), for every \(x \in \R^{d} \setminus \overline B_{r}\) and \(0 < \epsilon < r\) the function \(y \mapsto \rho_{\epsilon}(x - y)\) is supported in \(\R^{d} \setminus \{0\}\) and then the convolution
	\[{}
	\rho_{\epsilon} * T(x)
	\vcentcolon= \langle T_{y}, \rho_{\epsilon}(x - y) \rangle
	\] 
	is well-defined and smooth in \(\R^{d} \setminus \overline B_{r}\).{}
	Given \(R \in O(d)\), by radiality of \(T\) and \(\rho\) we have
	\[{}
	\rho_{\epsilon} * T(Rx)
	= \langle T_{y}, \rho_{\epsilon}(Rx - y) \rangle
	= \langle T_{y}, \rho_{\epsilon}(Rx - Ry) \rangle{}
	= \langle T_{y}, \rho_{\epsilon}(x - y) \rangle{}
	= \rho_{\epsilon} * T(x).
	\]
	Thus, \(\rho_{\epsilon} * T\) is a radial function in \(\R^{d} \setminus \overline B_{r}\).{}

	By the assumption on the functions \(c_{1}, \ldots, c_{d}\), the vector \(v \vcentcolon= (c_{1}(x), \ldots, c_{d}(x))\) belongs to the tangent plane of the sphere \(\partial B_{\abs{x}}\) at \(x\).{}
	Thus, there exists a smooth curve \(\gamma \colon (-1, 1) \to \partial B_{\abs{x}}\)  such that \(\gamma(0) = x\) and \(\gamma'(0) = v\).{}
	In particular, \((\rho_{\epsilon} * T) \circ \gamma\) is constant and applying the chain rule we get
	\[{}
	\sum_{j = 1}^{d} c_{j}(x) \partial_{j}(\rho_{\epsilon} * T)(x)
	= \frac{\difq }{\difq t} (\rho_{\epsilon} * T) \circ \gamma(t) \Big|_{t = 0}
	= 0.
	\]
	As \(\epsilon \to 0\), the family of functions \(\sum\limits_{j = 1}^{d} c_{j} \partial_{j}(\rho_{\epsilon} * T)\) converges weakly in the sense of distributions in \(\R^{d} \setminus \overline B_{r}\) to \(\sum\limits_{j = 1}^{d}{c_{j} \partial_{j}T}\).{}
	Hence, 
	\[{}
	\sum_{j = 1}^{d}{c_{j} \partial_{j}T} = 0
	\quad \text{in \(\R^{d} \setminus \overline B_{r}\)\,.}
	\]
	Since this property holds for any \(r > 0\), the conclusion follows.
\end{proof}

\begin{proof}[Proof of Proposition~\ref{propositionCharacterization}]
	Let \(k \in \{1, \ldots, d\}\).{}
	There exist smooth functions \(p \colon \R^{d} \setminus \{0\} \to \R\) and \(w  \colon \R^{d} \setminus \{0\} \to \R^{d}\) such that, for every \(x \ne 0\),
	\begin{equation*}
	e_{k} = p(x) \sum_{j = 1}^{d}{x_{j}e_{j}} + w(x)
	\quad \text{and} \quad 
	w(x) \cdot x = 0.
	\end{equation*}
	Indeed, it suffices to take \(p(x) \vcentcolon= x \cdot e_{k}/\abs{x}^{2}\). 
	Writing \(w = \sum\limits_{j = 1}^{d} c_{j} e_{j}\) for smooth functions \(c_{1}, \ldots, c_{d}\) in \(\R^{d} \setminus \{0\}\), then by Lemmas~\ref{lemmaHomogeneous} and~\ref{lemmaRadial} we have
	\[{}
	\sum_{j = 1}^{d}{x_{j} \partial_{j}T} = 0
	\quad \text{and} \quad 
	\sum_{j = 1}^{d}{c_{j} \partial_{j}T} = 0
    \quad \text{in \(\R^{d} \setminus \{0\}\).}
	\]
	Thus,  for every \(k \in \{1, \ldots, d\}\),
	\[{}
	\partial_{k}T
	= p \sum_{j = 1}^{d}{x_{j} \partial_{j}T} + \sum_{j = 1}^{d}{c_{j} \partial_{j}T} = 0
    \quad \text{in \(\R^{d} \setminus \{0\}\).}
	\]
	Since \(d \ge 2\), the set \(\R^{d} \setminus \{0\}\) is connected and we conclude that \(T\) is constant.
\end{proof}

Observe that when \(f\) is a homogeneous function of order \(\gamma > -d\), then \(f \in L^{1}\loc(\R^{d})\) and one may simply take as \(T\) the distribution associated to \(f\) by integration, more precisely, for every \(\varphi \in C_{c}^{\infty}(\R^{d})\),
\[
\langle T, \varphi \rangle{}
= \int_{\R^{d}} f \varphi.
\]
Note in this case that \(\DistribF{T} = f\) in \(\R^{d}\).

\begin{corollary}
	\label{corollaryDistributionCharacterization}
    Let \(d \ge 2\) and  \(\lambda > -d\).{}
	If\/ \(T\) is a radial homogeneous distribution of order \(\lambda\) in \(\R^{d}\), then \(T\) is represented by the function \(C\abs{x}^{\lambda}\) in \(\R^{d}\) for some constant \(C \in \R\), that is, \[{}
	\DistribF{T} = C\abs{x}^{\lambda}
	\quad \text{in \(\R^{d}\).}
	\]
\end{corollary}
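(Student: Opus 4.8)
The plan is to divide out the homogeneity so as to reduce to Proposition~\ref{propositionCharacterization}, and then to remove the origin by a scaling argument.

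First I would introduce the auxiliary distribution $\widetilde{T} \vcentcolon= \abs{x}^{-\lambda}\,T$ in $\R^d \setminus \{0\}$, which is meaningful because $\abs{x}^{-\lambda} \in C^\infty(\R^d \setminus \{0\})$. Since the restriction of $T$ to $\R^d \setminus \{0\}$ is radial and $\abs{x}^{-\lambda}$ is a radial function, the change of variables $x \mapsto Rx$ shows that $\widetilde T$ is radial. Moreover, a routine computation with the scaling \eqref{eq-scaling} gives $\bigl(\abs{x}^{-\lambda}\varphi\bigr)_t = t^{\lambda}\,\abs{x}^{-\lambda}\varphi_t$ for every $\varphi \in C_c^\infty(\R^d \setminus \{0\})$, so that, using that $T$ is homogeneous of order $\lambda$,
\[
\langle \widetilde T, \varphi_t\rangle = \langle T, \abs{x}^{-\lambda}\varphi_t\rangle = t^{-\lambda}\langle T, (\abs{x}^{-\lambda}\varphi)_t\rangle = \langle T, \abs{x}^{-\lambda}\varphi\rangle = \langle \widetilde T, \varphi\rangle ;
\]
that is, $\widetilde T$ is homogeneous of order $0$. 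By Proposition~\ref{propositionCharacterization} there is a constant $C \in \R$ with $\langle \widetilde T, \varphi\rangle = C \int_{\R^d}\varphi$ for every $\varphi \in C_c^\infty(\R^d \setminus \{0\})$. Multiplying back by the smooth factor $\abs{x}^\lambda$, and recalling that $\abs{x}^\lambda \in L^1\loc(\R^d)$ because $\lambda > -d$, we obtain $\langle T, \varphi\rangle = C\int_{\R^d}\abs{x}^\lambda\varphi$ for every $\varphi \in C_c^\infty(\R^d \setminus \{0\})$, i.e. $\DistribF{T} = C\abs{x}^\lambda$ in $\R^d \setminus \{0\}$.

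It remains to extend this identity across the origin. Let $S$ be the distribution on $\R^d$ associated to $C\abs{x}^\lambda$ by integration, so that $R \vcentcolon= T - S$ is a distribution in $\R^d$ that vanishes on $\R^d \setminus \{0\}$ --- hence is supported at $\{0\}$ --- and is homogeneous of order $\lambda$. Fix $\varphi \in C_c^\infty(\R^d)$ and, for $0 < \epsilon < 1$, a cutoff $\eta_\epsilon(x) = \eta(x/\epsilon)$ with $\eta \in C^\infty(\R^d)$, $\eta = 0$ on $B_1$ and $\eta = 1$ off $B_2$, so that $\norm{\partial^\alpha\eta_\epsilon}_{L^\infty} \le C_\alpha\,\epsilon^{-\abs{\alpha}}$. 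Since $\eta_\epsilon\varphi \in C_c^\infty(\R^d \setminus \{0\})$ and $R$ vanishes there, we get $\langle R, \varphi\rangle = \langle R, (1 - \eta_\epsilon)\varphi\rangle$. The function $\psi \vcentcolon= (1-\eta_\epsilon)\varphi$ is supported in $B_{2\epsilon}$, so writing $\psi = \chi_\epsilon$ with $\chi(y) \vcentcolon= \epsilon^d\psi(\epsilon y)$ supported in $B_2$, the powers of $\epsilon$ coming from the rescaling and from the derivatives of $\eta_\epsilon$ cancel exactly and yield $\norm{\partial^\alpha\chi}_{L^\infty} \le C_{\alpha,\varphi}\,\epsilon^{d}$. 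Using homogeneity of order $\lambda$ together with the finite order $N$ of $R$ on $\overline{B_2}$, we obtain
\[
\abs{\langle R, \varphi\rangle} = \abs{\langle R, \chi_\epsilon\rangle} = \epsilon^{\lambda}\,\abs{\langle R, \chi\rangle} \le C\,\epsilon^{\lambda}\sum_{\abs{\alpha}\le N}\norm{\partial^\alpha\chi}_{L^\infty} \le C\,\epsilon^{\lambda + d},
\]
which tends to $0$ as $\epsilon \to 0$ because $\lambda + d > 0$. As the left-hand side does not depend on $\epsilon$, we conclude that $\langle R,\varphi\rangle = 0$, i.e. $T = S$ and $\DistribF{T} = C\abs{x}^\lambda$ in $\R^d$.

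The only genuinely delicate point is this last step: a priori $T$ could differ from $S$ by a distribution supported at the origin, and the hypothesis $\lambda > -d$ is exactly what rules this out --- equivalently, it is what makes $\abs{x}^\lambda$ locally integrable and makes its homogeneity order incompatible with those of $\delta_0$ and its derivatives, namely $-d, -d-1, \dots$. An alternative to the scaling estimate above is to invoke the classical structure theorem for distributions supported at a point, $T - S = \sum_{\abs{\alpha}\le N} a_\alpha\,\partial^\alpha\delta_0$, and then match homogeneity orders to force every $a_\alpha = 0$; the argument given here has the advantage of being self-contained.
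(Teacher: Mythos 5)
Your proof is correct. The first half---dividing out $\abs{x}^\lambda$, verifying radiality and homogeneity of order $0$ for $\widetilde T = \abs{x}^{-\lambda}T$, and invoking Proposition~\ref{propositionCharacterization}---is exactly the paper's argument. Where you diverge is in how the identity $\DistribF{T} = C\abs{x}^\lambda$ is extended across the origin. The paper quotes the structure theorem for distributions supported at a point: $T - S$ is a finite combination of $\partial^\alpha\delta_0$, and since each $\partial^\alpha\delta_0$ is homogeneous of order $-d - \abs{\alpha} \le -d$ while $T - S$ is homogeneous of order $\lambda > -d$, the combination must vanish. You instead give a self-contained scaling estimate: after replacing $\varphi$ by $(1-\eta_\epsilon)\varphi$ (which is legitimate since $R = T - S$ vanishes on $C_c^\infty(\R^d\setminus\{0\})$), you rescale so that the test function has support in a fixed ball $B_2$, track the powers of $\epsilon$ coming from the rescaling, from the derivatives of the cutoff, and from the homogeneity of $R$, and use the finite local order of $R$ on $\overline{B_2}$ to get $\abs{\langle R,\varphi\rangle} \le C\epsilon^{\lambda + d} \to 0$. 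Both routes rely on a nontrivial fact from distribution theory---the paper on the structure theorem, you on the fact that every distribution has finite order on compact sets---so neither is strictly more elementary, but your version makes the role of the hypothesis $\lambda > -d$ completely explicit as the exponent that makes $\epsilon^{\lambda+d}$ go to zero. You also note the structure-theorem alternative at the end, which is precisely the paper's choice.
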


\begin{proof}
	Since the function \(\abs{x}^{-\lambda}\) is smooth in \(\R^{d} \setminus \{0\}\), \(\abs{x}^{-\lambda} T\) is well-defined as a distribution in \(\R^{d} \setminus \{0\}\).{}
	This distribution is radial and homogeneous of order \(0\), whence by Proposition~\ref{propositionCharacterization}, it is represented by a constant in \(\R^{d} \setminus \{0\}\).{}
	We deduce that \(T\) is represented by  
	\(C \abs{x}^{\lambda}\) in \(\R^{d} \setminus \{0\}\).{}
	Since \(\lambda > -d\), we have \(C \abs{x}^{\lambda} \in L^{1}\loc(\R^{d})\) and then this function represents a distribution \(S\) in \(\R^{d}\).{}
	It thus follows that the distribution \(T - S\) is supported by \(\{0\}\), whence it is a finite combination of a Dirac mass and its derivatives.
	Since these are all homogeneous distributions of order less than or equal to \(-d\) and \(T - S\) is homogeneous of order \(\lambda\) greater than \(-d\), we deduce that \(T - S = 0\) in \(\R^{d}\).
\end{proof}

\section{Fourier transform of radial homogeneous distributions}
\label{sectionfouriertransformradialhomogeneous}
The Fourier transform of \(f \in L^1(\R^d)\) is the bounded continuous function \(\widehat{f} \colon  \R^d \to \CC\) defined by
\[
\widehat{f}(\xi)
= \int_{\R^d} \e^{-2\pi \imath x \cdot \xi} f(x) \dif x
\]
and the inverse Fourier transform \(\widecheck{f} \colon  \R^d \to \CC\) is
\[
\widecheck{f}(x)
= \int_{\R^d} \e^{2\pi \imath x \cdot \xi} f(\xi) \dif \xi.
\]
More generally, for a tempered distribution \(T\) in \(\R^d\), we denote by \(\Fourier{T}\) the distributional Fourier transform defined for every Schwartz function \(\eta\) in \(\R^{d}\) by
\[
\langle \Fourier{T}, \eta \rangle 
= \langle T, \widehat{\eta} \rangle{}
\]
and, by analogy, one defines the inverse Fourier transform \(\Fourier^{-1}{T}\) using \(\widecheck{\varphi}\).
Observe that \(\widehat{\varphi}\) and \(\widecheck{\varphi}\) are Schwartz functions in \(\R^d\), whence \(\Fourier{T}\) and \(\Fourier^{-1}{T}\) are well-defined.

\begin{example}
\label{exampleFourierRepresentation}
If a tempered distribution \(T\) is represented by a function in \(\R^d\) and \(\DistribF{T} \in L^1(\R^d)\), then it follows from Fubini's theorem that \(\Fourier{T}\) is represented by the Fourier transform of \(\DistribF{T}\),
\[
\DistribF{\Fourier{T}} = \widehat{\DistribF{T}}
\quad \text{in \(\R^d\).}
\]
For \(\DistribF{T} \in L^2(\R^d)\), not necessarily summable, \(\DistribF{\Fourier{T}}\) is the \(L^2\)~Fourier-Plancherel transform of \(T\).
More generally, assume that \(\DistribF{T}\) is merely a locally integrable function in \(\R^d\) with at most polynomial growth at infinity.
In this case, one can compute \( \DistribF{\Fourier{T}} \) by approximation of \(\DistribF{T}\) as follows,
\begin{equation}
\label{eq-1384}
\langle \Fourier{T}, \eta \rangle 
= \lim_{r \to \infty}{\int_{\R^d}} \widehat{\DistribF{T} \chi_{B_r}} \,\eta,
\end{equation}
where \(\eta\) is any  Schwarz function.
Indeed, since \(T\) is a tempered distribution and \(\DistribF{T}\) has at most polynomial growth at infinity, we may approximate \(\eta\) by smooth functions with compact support and deduce the companion identity to \eqref{eq-1329} for tempered distributions,
\[
\langle T, \eta \rangle = 
\int_{\R^d} \DistribF{T} \, \eta.
\]
As a result, replacing in this identity \(\eta\) by \(\widehat{\eta}\), by the Dominated convergence theorem we get
\[
\langle \Fourier{T}, \eta \rangle
= \langle T, \widehat{\eta} \rangle 
= \int_{\R^d} \DistribF{T} \, \widehat{\eta}
= \lim_{r \to \infty}{\int_{\R^d} \DistribF{T}\chi_{B_{r}} \, \widehat{\eta}},
\]
which immediately implies \eqref{eq-1384} by a standard property of the Fourier transform of a function in \(L^1(\R^d)\). 
\end{example}

Observe that if \(T\) is a tempered radial distribution in \(\R^d\) that is homogeneous of degree \( \alpha - d \), then \(\Fourier{T}\) is a tempered radial distribution in \(\R^d\) that is homogeneous of degree \( - \alpha \).
We may thus rely on a large class of homogeneous functions in \(\R^d\) to construct distributions whose Fourier transforms can be easily identified by Corollary~\ref{corollaryDistributionCharacterization}.

\begin{example}
\label{exampleHomogeneous}
Given \(0 < \alpha < d\), consider the homogeneous function \(f_\alpha\) of order \(\alpha - d\) defined for \(x \ne 0\) by 
\begin{equation}
\label{eqHomogenousFunction}
f_\alpha(x) = \frac{1}{|x|^{d - \alpha}},
\end{equation}
which belongs to \(L^1\loc(\R^d)\) and defines a tempered distribution \(T_\alpha\) in \(\R^d\).
Note that \(T_\alpha\) is radial and homogeneous of order \(\alpha - d\).
Hence, \(\Fourier{T_\alpha}\) is radial and homogeneous of order \(-\alpha\) in \(\R^d\).
Therefore, by Corollary~\ref{corollaryDistributionCharacterization}, we deduce that
\begin{equation}
\label{eqHomogeneousFourier}
\DistribF{\Fourier{T_\alpha}}(\xi)
= \frac{c_{\alpha}}{|\xi|^{\alpha}}
\quad \text{in \(\R^{d}\)}.
\end{equation}
One then shows that
\(
c_{\alpha}=\pi^{\frac d2-\alpha}{\Gamma(\frac{\alpha}{2})}/{\Gamma (\frac{d-\alpha}{2})}
\), see \cite{SKM}*{p.\,490}.
\end{example}

If $\alpha\le0$ in \eqref{eqHomogenousFunction}, then $f_{\alpha}$ does not belong to \(L^1\loc(\R^d)\). However, let us recall a remarkable property of homogeneous functions in \(\R^{d} \setminus \{0\}\) proved by Hörmander~\cite{Hormander}*{Theorems~3.2.4 or~3.2.3}:

\begin{theorem}\label{HormanderNew}
For every homogeneous function $f \in C^\infty(\mathbb{R}^d\setminus\{0\})$, there exists a tempered distribution $T$ in $\mathbb{R}^d$  that is represented by \(f\) in \(\R^d \setminus \{0\}\), in other words,
\[
\DistribF{T} = f
\quad \text{in \(\R^d \setminus \{0\}\).}
\]
\end{theorem}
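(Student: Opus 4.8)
The plan is to construct the extension by \emph{analytic continuation in the homogeneity degree}, via the classical Hadamard finite-part regularization. Writing $f(x) = \abs{x}^{\lambda}\,\omega(x/\abs{x})$ with $\lambda \in \R$ and $\omega = f|_{\partial B_1} \in C^\infty(\partial B_1)$, and fixing a Schwartz function $\varphi$ on $\R^d$, the integral
\[
\langle T_z, \varphi \rangle \vcentcolon= \int_{\R^d} \abs{x}^{z}\, \omega(x/\abs{x})\, \varphi(x) \dif x
\]
converges absolutely for every $z \in \CC$ with $\Re z > -d$ and defines there a tempered distribution $T_z$ that depends holomorphically on $z$. The first step is to continue $z \mapsto \langle T_z, \varphi \rangle$ meromorphically past the line $\Re z = -d$. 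Splitting the integral at $\abs{x} = 1$, the part over $\R^d \setminus B_1$ is entire in $z$, while passing to polar coordinates turns the part over $B_1$ into $\int_0^1 r^{z + d - 1}\Phi(r)\dif r$ with $\Phi(r) \vcentcolon= \int_{\partial B_1}\omega(\theta)\varphi(r\theta)\dif\sigma(\theta) \in C^\infty([0,1])$. Expanding $\Phi$ in its Taylor polynomial of order $N$ at $r = 0$ then exhibits $\langle T_z, \varphi \rangle$, for $\Re z > -d - N - 1$, as the sum of an entire term and finitely many simple-pole terms $\sum_{j=0}^{N} \frac{\Phi^{(j)}(0)}{j!}\,\frac{1}{z + d + j}$, whose residues $\Phi^{(j)}(0)/j!$ are, by Taylor's formula, linear combinations of the derivatives $\partial^{\alpha}\varphi(0)$ with $\abs{\alpha} = j$.

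The second step is to read off the extension from this continuation. If $\lambda$ is none of the exceptional values $-d, -d-1, -d-2, \dots$, choose $N$ with $\lambda > -d - N - 1$; then $z = \lambda$ is not a pole and one sets $\langle T, \varphi \rangle \vcentcolon= \langle T_\lambda, \varphi \rangle$. If instead $\lambda = -d - m$ with $m$ a nonnegative integer, one instead takes the Hadamard finite part, i.e.\ $\langle T, \varphi \rangle$ is defined to be the constant term in the Laurent expansion of $\langle T_z, \varphi \rangle$ at $z = \lambda$ (equivalently, the value at $z = \lambda$ of $\langle T_z, \varphi \rangle$ after the polar term $\tfrac{\Phi^{(m)}(0)/m!}{z - \lambda}$ has been subtracted). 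In either case the explicit formula realizes $\varphi \mapsto \langle T, \varphi \rangle$ as a finite linear combination of the functionals $\varphi \mapsto \partial^{\alpha}\varphi(0)$, the functional $\varphi \mapsto \int_{\R^d \setminus B_1} \abs{x}^{\lambda}\omega(x/\abs{x})\varphi$, and an integral of $\varphi$ against an $L^1$ function on $B_1$; each of these is continuous for the Schwartz topology, so $T$ is a tempered distribution.

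It remains to verify that $\DistribF{T} = f$ in $\R^d \setminus \{0\}$. For $\varphi \in C_c^\infty(\R^d \setminus \{0\})$ the function $\Phi$ vanishes near $r = 0$, so all the $\Phi^{(j)}(0)$ vanish, the pole terms are absent, $z \mapsto \langle T_z, \varphi \rangle$ is entire, and it coincides with $\int_{\R^d}\abs{x}^{z}\omega(x/\abs{x})\varphi$ for every $z$ by uniqueness of analytic continuation; evaluating at $z = \lambda$ yields $\langle T, \varphi \rangle = \int_{\R^d} f \varphi$, which is the asserted identity.

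I expect the meromorphic continuation and the continuity estimates in the Schwartz topology to be the only genuinely technical points, and the truly delicate case to be $\lambda \in \{-d, -d-1, \dots\}$: there the original integral diverges and one must commit to the finite-part regularization. This is precisely where the statement carries content, since in that range the tempered extension $T$ is not unique — it is determined only up to a finite combination of derivatives of the Dirac mass at the origin.
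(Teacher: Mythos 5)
Your argument is correct and is essentially Hörmander's own proof (\cite{Hormander}*{Theorem~3.2.4}), which is exactly what the paper cites for this statement. It differs somewhat in presentation from the paper's Examples~\ref{exampleFourierHomogeneousNegative} and~\ref{exampleFourier-2k}, where the extension is realized by subtracting a Taylor polynomial directly from the test function, \(\langle T_\alpha,\eta\rangle = \int_{\R^d} \abs{x}^{-(d-\alpha)}\bigl(\eta - P_0^k\eta\bigr)\), with \(k\) chosen so that the single integral converges both at the origin and at infinity; you instead split at \(\abs{x}=1\) and take the Hadamard finite part of the meromorphic family \(z\mapsto T_z\). The two constructions differ only by a finite combination of derivatives of the Dirac mass at the origin; yours exhibits the residues and the exceptional set \(\{-d,-d-1,\dots\}\) transparently, while the Taylor-subtraction formula is the one the paper can quote literally in its computations.

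One inaccuracy in your justification of temperedness: for \(\lambda<-d\), the continued remainder \(\int_0^1 r^{\lambda+d-1}R_N(r)\dif r\) is \emph{not} an integral of \(\varphi\) against an \(L^1\) function on \(B_1\), since \(r^{\lambda+d-1}\) is not integrable near \(0\) and \(R_N\) itself depends on \(\varphi\). What is true is that, writing the Taylor remainder in integral form \(R_N(r)=\frac{r^{N+1}}{N!}\int_0^1(1-t)^N\Phi^{(N+1)}(tr)\dif t\), the remainder is dominated by a constant times \(\sup\{\abs{\partial^\beta\varphi(x)} : \abs{\beta}\le N+1,\ \abs{x}\le 1\}\), which is a continuous seminorm on \(\Schwartz(\R^d)\). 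With that correction the proof is complete. A further small remark on your closing comment: the extension of a distribution from \(\R^d\setminus\{0\}\) to \(\R^d\) is unique only up to distributions supported at the origin for \emph{every} \(\lambda\); what is special about \(\lambda\in\{-d,-d-1,\dots\}\) is that no extension \emph{homogeneous of the same degree} exists unless the residue (a specific spherical moment of \(f\)) vanishes, which is precisely why the paper's Example~\ref{exampleFourier-2k} must give up homogeneity and introduce a logarithm when \(-\alpha\) is an even integer.
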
  
The proof of Theorem~\ref{HormanderNew} is based on the explicit construction of the tempered distribution and one can follow the argument to construct suitable $T_{\alpha}$ with $\alpha\le0$, see the Examples below. Concerning the Fourier transform of a tempered distribution, we also recall \cite{Hormander}*{Theorem~7.1.18}:

\begin{theorem}\label{HormanderFourier}
If a distribution \(T\) in \(\R^d\) is represented in \(\R^d \setminus \{0\}\) by a smooth homogeneous function \(f \colon \mathbb{R}^d\setminus\{0\} \to \CC\), then \(T\) is a tempered distribution and \(\Fourier{T}\) is also represented in \(\R^d \setminus \{0\}\) by a smooth homogeneous function \(h  \colon \mathbb{R}^d\setminus\{0\} \to \CC\).
\end{theorem}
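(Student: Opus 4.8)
The plan is to split \(T\) into a compactly supported piece and a piece that is smooth near the origin and exactly homogeneous near infinity, to apply the Fourier transform to each, and then to transfer the homogeneity. Denote by \(f\) the smooth homogeneous representative of \(T\) in \(\R^d \setminus \{0\}\), let \(\lambda \in \R\) be its degree, so that \(\abs{f(x)} \le C\abs{x}^\lambda\), and fix \(\chi \in C_c^\infty(\R^d)\) with \(\chi = 1\) near \(0\). Set
\[
T_\infty \vcentcolon= (1 - \chi) f
\qquad \text{and} \qquad
T_0 \vcentcolon= T - T_\infty .
\]
Since \(1 - \chi\) vanishes near \(0\), the function \(T_\infty\) extends to an element of \(C^\infty(\R^d)\) of polynomial growth, hence defines a tempered distribution; on \(\R^d \setminus \{0\}\) one has \(T_0 = \chi f\), so \(\supp T_0 \subseteq \supp \chi\) is compact. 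In particular \(T = T_0 + T_\infty\) is tempered, which is the first assertion, and by the Paley--Wiener--Schwartz theorem \(\Fourier T_0\) is the restriction of an entire function, so \(\Fourier T_0 \in C^\infty(\R^d)\). It therefore remains to analyse \(\Fourier T_\infty\) away from the origin.

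The heart of the matter is the claim that \(\Fourier T_\infty \in C^\infty(\R^d \setminus \{0\})\). Outside \(\supp \chi\) the function \(T_\infty\) coincides with \(f\), which is homogeneous of degree \(\lambda\); hence for every multi-index \(\alpha\) the derivative \(\partial^\alpha T_\infty\) agrees with a homogeneous function of degree \(\lambda - \abs{\alpha}\) for large \(\abs{x}\), while being bounded together with all its derivatives on compact sets, so \(\abs{\partial^\alpha T_\infty(x)} \le C_\alpha (1 + \abs{x})^{\lambda - \abs{\alpha}}\) on \(\R^d\). Choosing \(\abs{\alpha} > \lambda + d\) makes \(\partial^\alpha T_\infty \in L^1(\R^d)\), and then, by the differentiation rule for the Fourier transform, \((2\pi\imath\xi)^\alpha \Fourier T_\infty = \Fourier(\partial^\alpha T_\infty)\) is a bounded continuous function on \(\R^d\). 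Applying this with \(\alpha = m e_j\), where \(m\) is a fixed integer larger than \(\lambda + d\) and \(j \in \{1, \dots, d\}\), shows that each \(\xi_j^m \, \Fourier T_\infty\) is continuous; dividing by \(\xi_j^m\) on the open set \(\{\xi_j \ne 0\}\) and using \(\R^d \setminus \{0\} = \bigcup_j \{\xi_j \ne 0\}\) gives that \(\Fourier T_\infty\) is continuous in \(\R^d \setminus \{0\}\). To upgrade this to \(C^\infty\), observe that for any multi-index \(\beta\) the distribution \(x^\beta T_\infty\) is again a smooth function that vanishes near \(0\) and is homogeneous of degree \(\lambda + \abs{\beta}\) near infinity; applying the previous step to it and using that \(\partial^\beta \Fourier T_\infty\) equals \(\Fourier(x^\beta T_\infty)\) up to a multiplicative constant shows that \(\partial^\beta \Fourier T_\infty\) is continuous in \(\R^d \setminus \{0\}\) for every \(\beta\). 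Hence \(\Fourier T = \Fourier T_0 + \Fourier T_\infty\) is smooth in \(\R^d \setminus \{0\}\), so \(\DistribF{\Fourier T} = h\) in \(\R^d \setminus \{0\}\) for some \(h \in C^\infty(\R^d \setminus \{0\})\).

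It remains to see that \(h\) can be taken homogeneous. Temperedness of \(T\) and smoothness of \(\Fourier T\) off the origin, obtained above, hold for any distribution representing \(f\) there; for the homogeneity of the representative one uses that, by Theorem~\ref{HormanderNew}, \(f\) extends to a tempered distribution homogeneous of degree \(\lambda\), whose Fourier transform is then a tempered distribution homogeneous of degree \(-d - \lambda\), as recorded in the observation preceding Example~\ref{exampleHomogeneous} (radiality plays no role there). Restricting that homogeneity identity to test functions supported in \(\R^d \setminus \{0\}\) shows that the corresponding \(h\) is homogeneous of degree \(-d - \lambda\), which completes the proof. The step I expect to be delicate is precisely the passage from the summability of high-order derivatives of \(T_\infty\) to the \(C^\infty\) regularity of \(\Fourier T_\infty\) in \(\R^d \setminus \{0\}\): one must keep careful track of which derivatives of \(T_\infty\) are genuinely integrable and divide by powers of \(\xi_j\) only on the region where this is legitimate; once this localization is organised, the remaining steps are routine.
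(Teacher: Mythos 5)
The paper gives no proof of this statement; it simply cites \cite{Hormander}*{Theorem~7.1.18}, so your attempt is a reconstruction from scratch. The decomposition $T = T_0 + T_\infty$, the temperedness, the Paley--Wiener--Schwartz step, and the smoothness argument --- using $\partial^\alpha T_\infty \in L^1(\R^d)$ for $|\alpha|$ large, dividing by monomials off the origin, and bootstrapping with $x^\beta T_\infty$ --- are all correct and standard.

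There is, however, a genuine gap in the final paragraph. You invoke Theorem~\ref{HormanderNew} to assert that $f$ extends to a tempered distribution that is \emph{homogeneous} of degree $\lambda$, but that theorem only guarantees the existence of a tempered extension, not a homogeneous one; a homogeneous extension may fail to exist when $\lambda$ is an integer $\le -d$. The paper's own Example~\ref{exampleFourier-2k} exhibits this: for $f(x) = 1/|x|^{d+2\ell}$, any tempered distribution $T$ that equals $f$ off the origin has $\Fourier{T}$ represented off the origin by $(-A_{2\ell}\log|\xi| + \lambda_{2\ell})|\xi|^{2\ell}$ plus a polynomial, which is never homogeneous because of the logarithm. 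So your final step cannot be carried out, and indeed the conclusion of the theorem --- as stated in the paper --- fails in these exceptional degrees. The underlying issue is that the statement is weaker than Hörmander's actual hypothesis: Theorem~7.1.18 assumes that $T$ \emph{itself} is homogeneous on $\R^d$, not merely that its restriction to $\R^d\setminus\{0\}$ is a homogeneous function (two distributions agreeing off $\{0\}$ differ by $\sum_\alpha c_\alpha \partial^\alpha\delta_0$, and the resulting polynomial $\sum_\alpha c_\alpha(2\pi\imath\xi)^\alpha$ generically destroys homogeneity of the Fourier transform). Under the corrected hypothesis, the homogeneity of $\Fourier{T}$ is immediate and you do not need Theorem~\ref{HormanderNew} at all, and your proof then closes.
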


Note that Theorem~\ref{HormanderFourier} does not provide a clear relation between the functions that represent \(T\) and \(\Fourier{T}\). Here we rely on the characterization of the previous section to compute the Fourier transform of \(T_{\alpha}\) for \(\alpha < 0\) that is not an even integer.

\begin{example}
\label{exampleFourierHomogeneousNegative}
When \(\alpha \le 0\) is not an integer, let \(T_\alpha\) be  the tempered distribution defined for every Schwarz function \(\eta\) by 
\begin{equation}
\label{eq-952}
\langle T_{\alpha}, \eta \rangle = \int_{\R^{d}} \frac{1}{\abs{x}^{d - \alpha}} (\eta(x) - P_{0}^{k}\eta (x)) \dif x,
\end{equation}
where \(P_{0}^{k}\eta\) is the Taylor polynomial of \(\eta\) of order \(k \in \N\) at \(0\).
We take \(k\) such that \(k + 1 + \alpha > 0\) to ensure summability in a neighborhood of the origin and \(k + \alpha < 0\) for summability near infinity. For \eqref{eq-952} we refer to the already mentioned proof of \cite{Hormander}*{Theorems~3.2.4} or  \cite{SKM}*{Eq.~(25.22)}. 
Note that \(T_\alpha\) is represented in \(\R^d \setminus \{0\}\) by \(f_\alpha\) given by \eqref{eqHomogenousFunction}. 
This distribution \(T_\alpha\) is radial and homogeneous of order $\alpha-d$.
Its Fourier transform \(\Fourier{T_\alpha}\) is a radial homogeneous distribution of order \(-\alpha\).
To see why its homogeneous, one first observes that 
\[
\widehat{\eta_t}(x) = \widehat{\eta}\Bigl( \frac{x}{t} \Bigr)
\quad \text{and} \quad
P_{0}^{k} \widehat{\eta_t}(x)
= P_{0}^{k}\widehat{\eta}\Bigl( \frac{x}{t} \Bigr).
\]
By Corollary~\ref{corollaryDistributionCharacterization}, \(\Fourier{T_\alpha}\) is therefore represented in \(\R^d\) by the \(L^1\loc\)~function \eqref{eqHomogeneousFourier}.
\end{example}

\begin{example}\label{exampleFourier-2k}
Assume that \(\alpha \le 0\) is an integer and take \(k \vcentcolon= -\alpha\). 
When \(k = -\alpha\) is odd, we may take \eqref{eq-952} as the definition of \(T_\alpha\) by considering the integral as a principal value, more precisely,
\[
\langle T_{\alpha}, \eta \rangle 
= \lim_{r \to \infty}{\int_{B_r} \frac{1}{\abs{x}^{d - \alpha}} (\eta(x) - P_{0}^{k}\eta (x)) \dif x}.
\]
Summability near infinity is then ensured by the fact that the term of order \(k\) in the Taylor polynomial is an odd function and therefore its principal value integral is equal to zero.
Since \(\Fourier{T_\alpha}\) is radial and homogeneous of order \(-\alpha\), it is also represented in \(\R^d\) by the \(L^1\loc\)~function \eqref{eqHomogeneousFourier}.

When \(k = -\alpha\) is even, it is not possible to define a homogeneous tempered distribution in \(\R^d\) that can be represented by \(1/\abs{x}^{d - \alpha}\) in \(\R^d \setminus \{0\}\) and one has to waive the homogeneity property.
An alternative in this case is take \(T_\alpha\) defined for every Schwartz function \(\eta\) by
\[
\langle T_{\alpha}, \eta \rangle 
=\frac{-1}{k!}\biggl[\int_0^{\infty}\log{\rho} \; \Average{\eta}{}^{(k+1)}(\rho) \dif\rho+\Average{\eta}{}^{(k)}(0) \sum_{j=1}^{k}\frac{1}{j}\biggr],
\]
where
\begin{equation*}
    \Average{\eta}{}(\rho) 
    \vcentcolon= \int_{\partial B_1} \eta(\rho y) \dif \sigma(y).
\end{equation*}
The distribution \(T_\alpha\) is not homogeneous but is represented in \(\R^d \setminus \{0\}\) by \(f_\alpha\) given by \eqref{eqHomogenousFunction}.
In fact, if $\varphi\in C^{\infty}_c(\mathbb{R}^d\setminus\{0\})$ then, integrating by parts, one gets
\[
\langle T_\alpha, \varphi\rangle
= -\frac{1}{k!} \int_0^{\infty}\log{\rho} \; \Average{\varphi}{}^{(k+1)}(\rho) \dif\rho 
=\int_{\mathbb{R}^d} \frac{\varphi(x)}{|x|^{d + k}} \dif x.
\]
The Fourier transform of such a distribution can be explicitly evaluated for every Schwarz function \(\eta\) as
\begin{equation}
\label{eqFourierCritical}
\langle\Fourier{T_\alpha}, \eta\rangle
= \int_{\mathbb{R}^d}(-A_k\log{|\xi|} + \lambda_{k})|\xi|^{k} \eta(\xi) \dif\xi,
\end{equation}
where
\begin{equation}
\label{eq921}
A_k \vcentcolon=(-1)^{\frac k2}\frac{2\pi^{k+\frac d2}}{\Gamma (\frac{N+k}{2})(k/2)!} \quad \mbox{and} \quad 
\lambda_k 
\vcentcolon= \frac{\difq}{\difq \alpha}\bigl((\alpha+k)c_{\alpha}\bigr)\Big|_{\alpha = -k}\,.
\end{equation}
We refer to \cite{SKM}*{Lemma~25.2 and Remark~25.2} for a proof of \eqref{eqFourierCritical}.
\end{example}

\section{Positivity of the Fourier transform}
\label{sectionPositive}

The follow result concerning the positivity of the Fourier transform is based on a standard argument, see e.g.~\cite{BCMC}*{Section~5.1}. 
We present a proof for the convenience of the reader:

\begin{proposition}
    \label{propositionPositivity}
    Let \(T\) be a tempered distribution represented in \(\R^d\) by a radial function \(\DistribF{T} \in C^\infty(\R^d \setminus \{0\})\) that is locally integrable in \(\R^d\) and is such that, for any \(x \ne 0\), the function
    \[
    \rho \in (0, \infty) \longmapsto \rho^{d - 1}\DistribF{T}(\rho x)
    \]
    is non-increasing, convex, and converges to zero as \(\rho \to \infty\).
    If \(\Fourier{T}\) is represented by a function \(\FourierF{T}\) in \(\R^d\) which is smooth in \(\R^d \setminus \{0\}\) and \(T\) is not identically zero, then \(\DistribF{\Fourier{T}} > 0\) in \(\R^d \setminus \{0\}\).
\end{proposition}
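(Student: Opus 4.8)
The plan is to show that $\Fourier{T}$ is a positive distribution and then to upgrade this to the claimed strict pointwise inequality away from the origin. Fix a unit vector $e$ and set $\psi(\rho) \vcentcolon= \rho^{d-1}\DistribF{T}(\rho e)$, so that by radiality $\DistribF{T}(x) = \psi(\abs{x})/\abs{x}^{d-1}$. Since $\psi$ is non-increasing, convex, and tends to $0$ at infinity, its distributional second derivative $\mu \vcentcolon= \psi''$ is a nonnegative Radon measure on $(0,\infty)$, finite on compact subsets, and since $\psi\ge 0$ is convex and vanishes at infinity, also $\psi'\to 0$ at infinity; hence
\[
\psi(\rho) = \int_{(0,\infty)} (\tau - \rho)_+ \dif\mu(\tau)
\qquad\text{for every }\rho > 0,
\]
which translates into $\DistribF{T}(x) = \int_{(0,\infty)} h_\tau(x)\dif\mu(\tau)$ with $h_\tau(x) \vcentcolon= (\tau - \abs{x})_+/\abs{x}^{d-1}$. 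Each $h_\tau$ is compactly supported with an integrable singularity at $0$, so $h_\tau \in L^1(\R^d)$ and $\widehat{h_\tau}$ is a bounded continuous function. Moreover $\DistribF{T}$ is locally integrable and, since $\psi \to 0$, bounded away from the origin, hence of at most polynomial growth; so Example~\ref{exampleFourierRepresentation} gives $\langle \Fourier{T},\eta\rangle = \langle T,\widehat{\eta}\rangle = \int_{\R^d}\DistribF{T}\,\widehat{\eta}$ for every Schwartz function $\eta$, and substituting the decomposition of $\DistribF{T}$ and using Fubini's theorem — legitimate because $\int_{\R^d}\abs{\DistribF{T}}\,\abs{\eta} < \infty$ — yields
\[
\langle \Fourier{T},\eta\rangle = \int_{(0,\infty)}\Bigl(\int_{\R^d}\widehat{h_\tau}\,\eta\Bigr)\dif\mu(\tau)
\qquad\text{for every Schwartz function }\eta.
\]

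The core of the proof is the pointwise positivity of the building block: $\widehat{h_\tau}(\xi) > 0$ for every $\xi \ne 0$ and every $\tau > 0$. A scaling reduces this to $\tau = 1$. Since $h_1$ is radial and integrable, the Fourier slice theorem gives $\widehat{h_1}(\xi) = \widehat{H}(\abs{\xi})$, where $H(t) \vcentcolon= \int_{\R^{d-1}} h_1\bigl(\sqrt{t^2 + \abs{y}^2}\bigr)\dif y$ is the one-dimensional marginal of $h_1$. Writing $h_1(x) = \int_0^1 \abs{x}^{-(d-1)}\chi_{B_s}(x)\dif s$ and integrating over $y$, one obtains for $0 < t < 1$ that $H(t) = t\int_t^1 F(\lambda)\lambda^{-2}\dif\lambda$ and hence $H''(t) = -F'(t)/t$, where $F$ is an explicit decreasing profile on $(0,1)$ (arising from the $(d-1)$-dimensional integration) with $F(1^-)=0$ and $F'(\lambda) = -\sigma_{d-1}(1-\lambda^2)^{(d-3)/2}/\lambda < 0$. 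Consequently $H$ is even, nonnegative, identically zero on $[1,\infty)$, of class $C^1$ on $\R$, and strictly convex on $(0,1)$, with $H''$ an absolutely continuous measure of positive density there. By P\'olya's criterion, $H = \int_{(0,1]} (a-\abs{\cdot})_+\dif H''(a)$, so
\[
\widehat{h_1}(\xi) = \widehat{H}(\abs{\xi}) = \int_{(0,1]} \frac{\sin^2(\pi a\abs{\xi})}{\pi^2\abs{\xi}^2}\dif H''(a) > 0
\qquad\text{for every }\xi \ne 0,
\]
the integrand being positive for Lebesgue-almost every $a$. (Alternatively, the radial Fourier transform formula reduces the claim to the positivity of $\int_0^X t^{-(d/2-1)}J_{d/2-1}(t)\dif t$ for all $X > 0$, which follows from the integral representation of Bessel functions together with the positivity of the sine integral.)

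To conclude, from $\widehat{h_\tau} \ge 0$ and the superposition formula we get $\langle\Fourier{T},\eta\rangle \ge 0$ for every $\eta \in C_c^\infty(\R^d)$ with $\eta \ge 0$; thus $\Fourier{T}$ is a positive distribution, and since it is represented by $\FourierF{T}$, which is locally integrable and smooth off the origin, we deduce $\FourierF{T} \ge 0$ in $\R^d \setminus \{0\}$. For strictness, note first that $\mu \not\equiv 0$: otherwise $\psi$ is affine, hence (being nonnegative, non-increasing and vanishing at infinity) $\psi \equiv 0$, so $\DistribF{T} \equiv 0$ and $T = 0$, contradicting the hypothesis. Choose $0 < a < b$ with $\mu([a,b]) > 0$ and set $g_{a,b} \vcentcolon= \int_{[a,b]} h_\tau\dif\mu(\tau) \in L^1(\R^d)$; then $\widehat{g_{a,b}} = \int_{[a,b]}\widehat{h_\tau}\dif\mu(\tau)$ is a continuous function, strictly positive on $\R^d \setminus \{0\}$ by the previous paragraph. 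Since $\DistribF{T} - g_{a,b} \ge 0$ is again a nonnegative superposition of the $h_\tau$, the distribution $\Fourier{T} - \widehat{g_{a,b}}\dif x = \Fourier(T - T_{a,b})$ is positive, so its density $\FourierF{T} - \widehat{g_{a,b}}$ is nonnegative on $\R^d \setminus \{0\}$; therefore $\FourierF{T} \ge \widehat{g_{a,b}} > 0$ there.

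The step carrying the real content is the building-block positivity of the second paragraph, and within it the computation that the marginal $H$ is convex — equivalently, that $F' < 0$ on $(0,1)$, which rests only on the positivity of $(1-\lambda^2)^{(d-3)/2}/\lambda$ — since everything else then follows mechanically from P\'olya's criterion and the explicit Fourier transform of a triangle function. A more routine but still delicate point is the measure-theoretic bookkeeping: justifying the decomposition $\DistribF{T} = \int h_\tau\dif\mu$, the interchange of $\Fourier{}$ with the $\mu$-integral at the level of tempered distributions, and the passage between "positive distribution" and "nonnegative continuous density".
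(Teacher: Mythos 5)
Your proof is correct, but it takes a genuinely different route from the paper's. The paper's proof works directly with the truncations $f_r = \DistribF{T}\chi_{B_r}$, computes $\widehat{f_r}$ by polar integration over $\partial B_1$, and integrates by parts in the radial variable so as to reduce matters to the positivity of $\int_0^\infty \sin(2\pi\rho|\alpha|)\Theta'(\rho)\dif\rho$, where $\Theta(\rho) = -\rho^{d-1}\vartheta(\rho)$; this positivity is isolated as Lemma~\ref{lemmaPositiveElementary} and uses exactly the fact that $\Theta' = -\psi'$ is nonnegative (monotonicity of $\psi$) and non-increasing (convexity of $\psi$), with $\Theta' \to 0$ at infinity. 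Your argument instead performs a Choquet-type decomposition \emph{before} taking the Fourier transform: you write the radial profile as a superposition $\DistribF{T}(x) = \int_{(0,\infty)} h_\tau(x)\dif\mu(\tau)$ of the explicit $L^1$ building blocks $h_\tau = (\tau - |\cdot|)_+/|\cdot|^{d-1}$ via the representation $\psi(\rho) = \int(\tau-\rho)_+\dif\mu(\tau)$, then prove $\widehat{h_\tau} > 0$ by passing to the one-dimensional marginal through the projection–slice theorem and invoking P\'olya's criterion: the convexity of the marginal $H$, verified by the pleasant computation $H''(t) = \sigma_{d-1}(1-t^2)^{(d-3)/2}/t^2 > 0$ on $(0,1)$, gives $\widehat{H}(s) = \int \sin^2(\pi a s)/(\pi^2 s^2)\,\dif H''(a)$. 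Both proofs need both monotonicity and convexity of $\rho\mapsto\rho^{d-1}\DistribF{T}(\rho x)$; the paper uses them to make the sine-integral lemma applicable, you use them to make the tent-function decomposition possible. Where they differ most visibly is the strict-positivity step: your device of splitting off a compact piece $g_{a,b} = \int_{[a,b]}h_\tau\dif\mu$ with $\mu([a,b])>0$ and observing $\FourierF{T}\ge\widehat{g_{a,b}}>0$ off the origin is cleaner and more explicit than the paper's brief ``inspection of Lemma~\ref{lemmaPositiveElementary}''.

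Two small comments on accuracy, neither affecting the argument. First, for $d\ge 3$ the marginal $H$ is not of class $C^1$ on all of $\R$: in fact $H(0^+)=+\infty$ (e.g.\ for $d=3$, $H(t) = \sigma_2(\ln(1/t)+t-1)$), so $H$ is merely a convex, decreasing, $L^1$ function on $(0,\infty)$ with a singularity at the origin. This is harmless: the representation $H(t)=\int_{(0,1]}(a-|t|)_+\dif H''(a)$ holds for $t\ne 0$, and the Fubini step behind $\widehat{H}(s) = \int\widehat{(a-|\cdot|)_+}(s)\dif H''(a)$ is justified because $\int_{(0,1]}a^2\dif H''(a) = \sigma_{d-1}\int_0^1(1-a^2)^{(d-3)/2}\dif a<\infty$. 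Second, the phrase ``$H'(1^-)=0$'' deserves a word: it follows from $H'(t)=-\sigma_{d-1}\int_t^1 (1-u^2)^{(d-3)/2}u^{-2}\dif u$, and it is what guarantees that $H''$ carries no atom at $a=1$, so that $H''$ is genuinely an absolutely continuous measure on $(0,1)$ as you assert.
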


We rely on the following elementary property:

\begin{lemma}
    \label{lemmaPositiveElementary}
    Let \(\Lambda\) be a locally integrable function in \([0, \infty)\).
    If \(\Lambda\) is nonnegative, non-increasing and converges to zero at infinity, then, for every \(r > 0\),
    \begin{equation}
    \label{eq-1596}
    \int_0^r \sin{(2\pi\tau)} \Lambda(\tau) \dif \tau \ge 0
    \end{equation}
    and the the integral converges in the extended interval \([0, \infty]\) as \(r \to \infty\).
\end{lemma}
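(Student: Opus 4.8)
The plan is to exploit that $\tau \mapsto \sin(2\pi\tau)$ has constant sign on each half-period $[\tfrac{j}{2}, \tfrac{j+1}{2}]$, $j = 0,1,2,\dots$, being nonnegative when $j$ is even and nonpositive when $j$ is odd. Writing $F(r) := \int_0^r \sin(2\pi\tau)\,\Lambda(\tau)\dif\tau$, the first step is to pair, for each integer $j \ge 0$, the positive lobe $[j, j+\tfrac12]$ with the following negative lobe $[j+\tfrac12, j+1]$. Performing the substitution $\tau \mapsto \tau + \tfrac12$ in the latter and using $\sin(2\pi\tau + \pi) = -\sin(2\pi\tau)$, one rewrites
\[
c_j := \int_j^{j+1}\sin(2\pi\tau)\,\Lambda(\tau)\dif\tau
= \int_j^{j+1/2}\sin(2\pi\tau)\bigl(\Lambda(\tau) - \Lambda(\tau + \tfrac12)\bigr)\dif\tau \ge 0,
\]
since $\sin(2\pi\tau) \ge 0$ on $[j, j+\tfrac12]$ and $\Lambda$ is non-increasing. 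Summing over $j$ gives $F(m) = \sum_{j=0}^{m-1} c_j \ge 0$ for every integer $m \ge 0$, and shows moreover that $(F(m))_{m}$ is non-decreasing.

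For a general $r > 0$, set $m = \lfloor r \rfloor$ and distinguish the two halves of $[m, m+1]$. If $m \le r \le m + \tfrac12$, then $\sin(2\pi\tau) \ge 0$ on $[m, r]$, hence $F(r) = F(m) + \int_m^r \sin(2\pi\tau)\,\Lambda(\tau)\dif\tau \ge F(m) \ge 0$. If $m + \tfrac12 \le r \le m+1$, then $\sin(2\pi\tau) \le 0$ on $[r, m+1]$, so $F(r) = F(m+1) - \int_r^{m+1}\sin(2\pi\tau)\,\Lambda(\tau)\dif\tau \ge F(m+1) \ge 0$. This establishes \eqref{eq-1596}, and the same dichotomy yields $F(r) \ge F(\lfloor r\rfloor)$, hence $F(r) \ge F(m)$ for every integer $m \le r$ by monotonicity of $(F(m))$.

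For the convergence statement, since $(F(m))_m$ is non-decreasing it converges to some $L \in [0,\infty]$, and by the previous inequality $\liminf_{r\to\infty}F(r) \ge L$. For the reverse bound, whenever $r \in [m, m+1]$ one has $\abs{F(r) - F(m)} \le \int_m^{m+1}\Lambda(\tau)\dif\tau \le \Lambda(m)$, using that $\Lambda$ is non-increasing (which, together with local integrability, also forces $\Lambda$ to be finite on $(0,\infty)$). Since $\Lambda(m) \to 0$, this gives $\limsup_{r\to\infty}F(r) \le L$, whence $F(r) \to L$ in $[0,\infty]$.

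The only genuinely delicate point is the incomplete half-period at the right endpoint of $[0,r]$: one must not pair lobes greedily, because a truncated negative lobe contributes negatively. The resolution is the observation made in the second paragraph that on a descending (negative-sign) half-period $F$ attains its minimum at the right endpoint, which is an integer, so the desired bound $F(r) \ge 0$ always reduces to the inequality $F(m) \ge 0$ at integers established in the first step. Everything else is a routine change of variables together with the monotonicity and decay of $\Lambda$.
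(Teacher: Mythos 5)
Your proof is correct and follows essentially the same plan as the paper's: reduce to nonnegativity of the full-period integrals $c_j = \int_j^{j+1}\sin(2\pi\tau)\Lambda(\tau)\,\mathrm{d}\tau$, observe that the integral over $[0,r]$ is bounded below by the integral over $[0,\lfloor r\rfloor]$ (if the last half-lobe is positive) or by $[0,\lceil r\rceil]$ (if it is negative), and control the error at non-integer $r$ by $\Lambda(\lfloor r\rfloor)\to 0$. The only small difference is in proving $c_j\ge 0$: you shift the negative lobe by a half-period and use $\Lambda(\tau)-\Lambda(\tau+\tfrac12)\ge 0$, whereas the paper compares $\Lambda$ on the whole period to the midpoint value $\Lambda(j+\tfrac12)$ and integrates $\sin$ exactly to zero; these are interchangeable Abel-type tricks and the rest of the argument is identical.
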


\begin{proof}[Proof of Lemma~\ref{lemmaPositiveElementary}]
    Since \(\Lambda\) is nonnegative, for \(r \in (k, k+1/2]\) with \(k \in \N\) we have
    \[
    \int_0^r \sin{(2\pi\tau)} \Lambda(\tau) \dif \tau
    \ge \int_0^k \sin{(2\pi\tau)} \Lambda(\tau) \dif \tau,
    \]
    whereas for \(r \in (k+1/2, k+1]\) with \(k \in \N\) we have
    \[
    \int_0^r \sin{(2\pi\tau)} \Lambda(\tau) \dif \tau
    \ge \int_0^{k+1} \sin{(2\pi\tau)} \Lambda(\tau) \dif \tau.
    \]
    By additivity of the integral, it thus suffices to show that, for every \(j \in \N\),
    \[
    \int_j^{j + 1} \sin{(2\pi\tau)} \Lambda(\tau) \dif \tau
    \ge 0.
    \]
    Since \(\Lambda\) is nonnegative and non-increasing, 
    \[
    \int_j^{j + 1} \sin{(2\pi\tau)} \Lambda(\tau) \dif \tau
    \ge \Lambda(j + 1/2) \int_j^{j + 1} \sin{(2\pi\tau)} \dif \tau = 0.
    \]
    Thus, the sequence of integrals in \eqref{eq-1596} with \(r = j \in \N\) is nondecreasing, whence its limit
    \[
    \lim_{j \to \infty}{\int_0^j \sin{(2\pi\tau)} \Lambda(\tau) \dif \tau}
    \]
    exists in \([0, \infty]\).
    To conclude the existence of the limit of the integrals in \eqref{eq-1596} as \(r \to \infty\), it suffices to observe that for every \(r \in [j, j+1]\) we have
    \[
    \biggl| \int_j^r \sin{(2\pi\tau)} \Lambda(\tau) \dif \tau  \biggr|
    \le (r - j)\Lambda(j) \le \Lambda(j)
    \]
    and that by assumption the right-hand side converges to zero as \(j \to \infty\).
\end{proof}

\begin{proof}[Proof of Proposition~\ref{propositionPositivity}]
By \eqref{eq-1384}, we have
\begin{equation}
\label{eq-1615}
\langle \Fourier{T}, \eta \rangle 
= \lim_{r \to \infty}{\int_{\R^d} \widehat{f_r} \,\eta},
\end{equation}
where \(f \vcentcolon= \DistribF{T} \) and \(f_r \vcentcolon= f \, \chi_{B_r}\).
Since \(f_r\) is radial, and in particular even, applying the integration formula in polar coordinates we get
\[
\widehat{f_r}(\xi)
= \int_{B_r} \cos{(2\pi x \cdot \xi)  f(x) \dif x}
= \int_{\partial B_1} \biggl( \int_{0}^r \cos{(2\pi \rho y \cdot \xi)  \vartheta(\rho) \rho^{d - 1} \dif \rho \biggr) \dif \sigma(y)},
\]
where \(\vartheta \in C^{\infty}(0, \infty)\) is such that \(\vartheta(\abs{x}) = f(x)\) for every \(x \ne 0\).

We show that the innermost integral is positive for every \(y \in \partial B_1\) and \(\xi \in \R^d\). 
It is enough to consider the case where \(\alpha \vcentcolon= y \cdot \xi \ne 0\).
Denote \(\Theta(\rho) = -\rho^{d - 1}\vartheta(\rho)\), so that
\[
\int_{0}^r \cos{(2\pi \rho y \cdot \xi)}  \vartheta(\rho) \rho^{d - 1} \dif \rho
= - \int_{0}^r \cos{(2\pi \rho \alpha)}  \Theta(\rho) \dif \rho.
\]
Integrating by parts, for every \(0 < \epsilon < r\)  we get
\begin{multline*}
\int_{\epsilon}^r \cos{(2\pi \rho y \cdot \xi)}  \vartheta(\rho) \rho^{d - 1} \dif \rho\\
= - \frac{1}{2 \pi |\alpha|} \sin{(2\pi \rho |\alpha|)} \Theta(\rho) \Big|_{\rho = \epsilon}^r + 
\frac{1}{2 \pi |\alpha|} \int_\epsilon^r \sin{(2\pi \rho |\alpha|)} \Theta'(\rho) \dif \rho.
\end{multline*}
Note that by assumption the function \(\Theta'\) is nonnegative, whence by the Fundamental theorem of calculus it is locally integrable in \([0, \infty)\).
On the other hand, since \(f\) is locally integrable, we have \(\int_0^1 \Theta(\rho)\rho \dif \rho/\rho < \infty\). Thus, there exists a sequence \((\epsilon_n)_{n \in \N}\) of positive numbers in \((0, 1)\) that converges to zero and satisfies \(\Theta(\epsilon_n)\epsilon_n \to 0\).
Taking \(\epsilon = \epsilon_n\) and letting \(n \to \infty\), we then get
\begin{multline*}
\int_{0}^r \cos{(2\pi \rho y \cdot \xi)}  \vartheta(\rho) \rho^{d - 1} \dif \rho\\
= - \frac{1}{2 \pi |\alpha|} \sin{(2\pi r |\alpha|)} \Theta(r) + 
\frac{1}{2 \pi |\alpha|} \int_0^r \sin{(2\pi \rho |\alpha|)} \Theta'(\rho) \dif \rho.
\end{multline*}

By assumption, \(\Theta\) converges to zero at infinity.
Hence, the first term in the right-hand side converges to zero as \(r \to \infty\), uniformly with respect to \(\alpha\).
Moreover, by Lemma~\ref{lemmaPositiveElementary}, the second term is nonnegative and converges as \(r \to \infty\).
Hence, by the Dominated convergence theorem and Fatou's lemma, for every nonnegative Schwarz function \(\eta\) we get
\[
\lim_{r \to \infty}{\int_{\R^d} \widehat{f_r} \,\eta}
\ge \int_{\R^d}  \biggl(\int_{\partial B_1} \frac{1}{2 \pi |y \cdot \xi|} \int_0^\infty \sin{(2\pi \rho |y \cdot \xi|)} \Theta'(\rho) \dif \rho \dif y \biggr)\eta(\xi) \dif \xi.
\]
Since \eqref{eq-1615} holds and \(\Fourier{T}\) is represented by a smooth function, we deduce that
\[
\DistribF{\Fourier{T}}(\xi)
\ge \int_{\partial B_1} \frac{1}{2 \pi |y \cdot \xi|} \int_0^\infty \sin{(2\pi \rho |y \cdot \xi|)} \Theta'(\rho) \dif \rho \dif y \ge 0.
\]
Since the function \(\Theta'\) is non-trivial, an inspection of the proof of Lemma~\ref{lemmaPositiveElementary} shows that the integral with respect to \(s\) is positive and the conclusion follows.
\end{proof}

\section{Fourier transform of radial doubly homogeneous functions}

\label{sectionfouriertransformdoublyhomogeneous}
In this section, we consider tempered distributions that are represented by functions that are homogeneous in a neighborhood of the origin and at infinity, but the homogeneity rates need not be the same. 

\begin{proposition}
\label{propositionFourierNonHomogeneous}
Take \(0 < \alpha < d\) and \(\beta < d \) with $\beta \not\in -2\N$.
Let \(g \in C^{\infty}(\R^d \setminus \{0\})\) be a radial function such that
\[
g(x) = 
\begin{cases}
{a}/{|x|^{d - \alpha}}
& \text{if\/ \(|x| \le r\),}\\
{b}/{|x|^{d - \beta}}
& \text{if\/ \(|x| \ge R\),}
\end{cases}
\]
where \(0 < r < R\) and \(a, b > 0\).
Then, there exists a tempered distribution \(\DistribS{g}\) in \(\R^{d}\) that is represented in \(\R^{d}\) by \(g\) and its Fourier transform \(\Fourier{\DistribS{g}}\) is represented in \(\R^{d}\) by a radial function \(\FourierF{\DistribS{g}} \in C^{\infty}(\R^d \setminus \{0\})\) that satisfies
\[
\FourierF{\DistribS{g}}(\xi)
= 
\begin{cases}
bc_{\beta}/|\xi|^{\beta}
+\eta(\xi) 
& \text{for \(|\xi|\le r\),}\\
ac_{\alpha}/|\xi|^{\alpha} + \zeta(\xi) 
& \text{for \(|\xi|\ge R\),}
 \end{cases}
\]
where \(\eta\) and \(\zeta\) are Schwartz functions.
\end{proposition}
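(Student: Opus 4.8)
The plan is to split $g$ according to its behaviour near the origin, at infinity, and in the intermediate region, and to Fourier-transform each piece separately.

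First I fix a radial cut-off $\psi \in C_c^\infty(\R^d)$ with $\psi = 1$ on $\overline{B_r}$ and $\psi = 0$ on $\R^d \setminus B_R$, and write
\begin{equation*}
g = a\,\psi f_\alpha + b\,(1-\psi) f_\beta + h,
\qquad
h \vcentcolon= g - a\,\psi f_\alpha - b\,(1-\psi) f_\beta,
\end{equation*}
with $f_\alpha, f_\beta$ as in \eqref{eqHomogenousFunction}. Using the explicit form of $g$ one sees that $h$ vanishes on $\overline{B_r}$ (there $g = af_\alpha$ and $\psi = 1$) and on $\R^d \setminus B_R$ (there $g = bf_\beta$ and $\psi = 0$), so $h \in C_c^\infty(\R^d \setminus \{0\})$ and $\widehat h$ is a Schwartz function. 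Since $0 < \alpha < d$ and $\beta < d$, the function $g$ lies in $L^1\loc(\R^d)$ and is bounded at infinity, hence defines a tempered distribution $\DistribS{g}$ represented by $g$ in $\R^d$, and the displayed identity holds between the corresponding tempered distributions.

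The heart of the matter is the computation of $\widehat{\psi f_\alpha}$. Since $\psi$ is compactly supported, $\psi f_\alpha$ is an $L^1$ function whose only singularity is $a/\abs{x}^{d-\alpha}$ at the origin, so $\widehat{\psi f_\alpha}$ is real-analytic on $\R^d$; writing $\psi f_\alpha = f_\alpha - (1-\psi)f_\alpha$ and using Example~\ref{exampleHomogeneous}, $\widehat{\psi f_\alpha}(\xi) = c_\alpha\abs{\xi}^{-\alpha} - \widehat{(1-\psi)f_\alpha}(\xi)$ for $\xi \ne 0$. Here $(1-\psi)f_\alpha$ is \emph{smooth} on all of $\R^d$ — the cut-off removes the singularity — and its derivatives of order $\abs{\nu}$ are bounded by a constant times $\abs{x}^{\alpha - d - \abs{\nu}}$ at infinity, hence lie in $L^1(\R^d)$ once $\abs{\nu} > \alpha$; applying the Fourier transform to enough such derivatives shows that $\widehat{(1-\psi)f_\alpha}$ is represented by a function in $C^\infty(\R^d \setminus \{0\})$ all of whose derivatives decay faster than any power of $\abs{\xi}$, uniformly on $\{\abs{\xi} \ge \delta\}$ for each $\delta > 0$. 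For the tail I argue symmetrically, but now through the regularized homogeneous distribution $T_\beta$ of Examples~\ref{exampleHomogeneous}, \ref{exampleFourierHomogeneousNegative} and~\ref{exampleFourier-2k}: the hypothesis $\beta \notin -2\N$ is precisely what guarantees that $\Fourier{T_\beta}$ is still represented by $c_\beta\abs{\xi}^{-\beta}$. Writing $(1-\psi)f_\beta = T_\beta - E_\beta$, the distribution $E_\beta$ is supported in $\overline{B_R}$ (outside $B_R$ both terms equal $f_\beta$), so $\widehat{E_\beta}$ is real-analytic on $\R^d$, while $(1-\psi)f_\beta$ is smooth on $\R^d$ and, since $\beta < d$, tempered; exactly as before, $\widehat{(1-\psi)f_\beta}$ is represented by a function in $C^\infty(\R^d \setminus \{0\})$ with all derivatives rapidly decreasing at infinity, and $\widehat{(1-\psi)f_\beta}(\xi) = c_\beta\abs{\xi}^{-\beta} - \widehat{E_\beta}(\xi)$ for $\xi \ne 0$.

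It then remains to assemble $\Fourier{\DistribS{g}} = a\,\widehat{\psi f_\alpha} + b\,\widehat{(1-\psi)f_\beta} + \widehat h$, which by the above is represented by a function in $C^\infty(\R^d \setminus \{0\})$. For $\abs{\xi} \ge R$ I group it as $a c_\alpha \abs{\xi}^{-\alpha}$ plus $-a\,\widehat{(1-\psi)f_\alpha}(\xi) + b\,\widehat{(1-\psi)f_\beta}(\xi) + \widehat h(\xi)$, a term that is smooth there and all of whose derivatives decay faster than any power of $\abs{\xi}$; multiplying it by a cut-off equal to $1$ on $\{\abs{\xi} \ge R\}$ and vanishing near the origin produces the global Schwartz function $\zeta$. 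For $\abs{\xi} \le r$ I group it as $b c_\beta \abs{\xi}^{-\beta}$ plus $a\,\widehat{\psi f_\alpha}(\xi) - b\,\widehat{E_\beta}(\xi) + \widehat h(\xi)$, a term that is smooth on all of $\R^d$; multiplying it by a cut-off equal to $1$ on $B_r$ produces the global Schwartz function $\eta$. I expect the main obstacle to be the passage from smoothness of the cut-off functions $(1-\psi)f_\alpha$ and $(1-\psi)f_\beta$ to Schwartz-type decay of their Fourier transforms away from the origin — the bookkeeping of how many derivatives must be integrated, the regularity of these transforms on $\R^d \setminus \{0\}$, and the final reduction of the remainders to genuine Schwartz functions — whereas the identification of the leading terms $c_\alpha\abs{\xi}^{-\alpha}$ and $c_\beta\abs{\xi}^{-\beta}$ merely repackages the homogeneous Fourier-transform formulas already at our disposal.
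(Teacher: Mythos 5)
Your proof is correct and follows essentially the same route as the paper: your decomposition $g = a\,\psi f_\alpha + b\,(1-\psi)f_\beta + h$ is precisely the paper's $g = g_1 + g_2 + g_3$, and the Fourier transform of each piece is computed from the same ingredients (the explicit formulas for $\mathcal{F}T_\alpha$ and $\mathcal{F}T_\beta$, smoothness of Fourier transforms of compactly supported distributions, and $L^1$-integrability of enough derivatives of the cut-off tails). The only stylistic difference is that the paper encapsulates the key decay estimate in the standalone Lemma~\ref{fouriertransform} (applied twice, via Theorem~\ref{HormanderFourier}), whereas you re-derive its content inline through the auxiliary splittings $\psi f_\alpha = f_\alpha - (1-\psi)f_\alpha$ and $(1-\psi)f_\beta = T_\beta - E_\beta$; both routes yield the same conclusion.
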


We rely on the following

\begin{lemma}\label{fouriertransform}
If a tempered distribution \(T\) in \(\R^d\) is represented in \(\R^d \setminus \{0\}\) by a homogeneous function \(f \in C^\infty(\R^d \setminus \{0\})\), 
then, for every \(\varphi, \psi \in C_c^\infty(\R^d)\) that equal \(1\) in a neighborhood of \(0\), both distributions 
\begin{equation}
\label{eq892}
\Fourier{(\varphi T)} - (1-\psi) \Fourier{T} 
\quad \text{and} \quad \Fourier{((1-\varphi)T)} - \psi \Fourier{T}
\end{equation}
are represented in \(\R^d\) by Schwartz functions.
\end{lemma}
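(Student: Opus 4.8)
The plan is to reduce the two distributions in \eqref{eq892} to a single one, to show that it is represented by a smooth function, and finally to prove that this function, together with all of its derivatives, decays faster than any polynomial. For the reduction, note that $\varphi T = T - (1-\varphi)T$, so $\Fourier(\varphi T) = \Fourier T - \Fourier((1-\varphi)T)$ and hence
\[
\Fourier(\varphi T) - (1-\psi)\Fourier T
= \psi\,\Fourier T - \Fourier((1-\varphi)T)
= -\bigl(\Fourier((1-\varphi)T) - \psi\,\Fourier T\bigr).
\]
Thus the two distributions in \eqref{eq892} differ only by a sign, and it suffices to prove that $S \vcentcolon= \Fourier((1-\varphi)T) - \psi\,\Fourier T$ is represented by a Schwartz function. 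Since $1-\varphi$ vanishes in a neighborhood of the origin, the distribution $(1-\varphi)T$ is represented in all of $\R^d$ by $w \vcentcolon= (1-\varphi)f \in C^\infty(\R^d)$, a function that vanishes near the origin, coincides with $f$ outside a ball, is therefore homogeneous of some order $\lambda$ there, and has polynomial growth; in particular $\widehat w$ is well-defined as a tempered distribution.

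Next I would check that $S$ is represented by a smooth function on $\R^d$. On the neighborhood of $0$ where $\psi \equiv 1$, we have $S = \Fourier((1-\varphi)T) - \Fourier T = -\Fourier(\varphi T)$, and since $\varphi T$ has compact support, $\Fourier(\varphi T)$ is a smooth function of at most polynomial growth by the Paley--Wiener--Schwartz theorem. On $\R^d \setminus \{0\}$, Theorem~\ref{HormanderFourier} gives that $\Fourier T$ is represented by a smooth function, and $\Fourier(\varphi T)$ is smooth everywhere, so $\widehat w = \Fourier T - \Fourier(\varphi T)$ and $\psi\,\Fourier T$ are both represented by smooth functions there. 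Patching these two descriptions, $S \in C^\infty(\R^d)$.

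It remains to prove the decay of $S$ and of all its derivatives at infinity. Outside the support of $\psi$ one has $S = \widehat w$, so it is enough to estimate the derivatives of $\widehat w$. For multi-indices $\alpha$ and $\beta$, the function $x^\beta w$ is smooth, vanishes near $0$, and equals $x^\beta f(x)$ for large $|x|$; hence $\partial^\alpha(x^\beta w)$ is smooth, vanishes near the origin, and is homogeneous of order $\lambda + |\beta| - |\alpha|$ outside a ball, so $\partial^\alpha(x^\beta w) \in L^1(\R^d)$ as soon as $|\alpha| > \lambda + |\beta| + d$. For such $\alpha$, the Fourier transform of $\partial^\alpha(x^\beta w)$ is a bounded function, and up to unimodular constants it equals $\xi^\alpha\,\partial_\xi^\beta\widehat w(\xi)$, because $\partial_\xi^\beta\widehat w = (-2\pi\imath)^{|\beta|}\widehat{x^\beta w}$ and $\widehat{\partial^\alpha v} = (2\pi\imath\xi)^\alpha\widehat v$. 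Summing over $|\alpha| = N$ for some integer $N > \lambda + |\beta| + d$ and using $\sum_{|\alpha| = N}|\xi^\alpha| \gtrsim |\xi|^N$ for $|\xi| \ge 1$, one gets $|\partial_\xi^\beta\widehat w(\xi)| \le C_{N,\beta}\,|\xi|^{-N}$ for $|\xi| \ge 1$; since $N$ is arbitrarily large, every derivative of $\widehat w$, hence every derivative of $S$, decays faster than any polynomial at infinity. Being continuous, each $\partial^\beta S$ is bounded on compact sets; therefore $S$, and with it $-S$, is a Schwartz function.

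The main obstacle is the interplay between the last two steps: $\widehat w$ on its own need not be a Schwartz function, since it may be singular at the origin, and it is precisely the subtraction of $\psi\,\Fourier T$ that removes this singularity. One has to reconcile the description of $S$ near $0$ (coming from the compact support of $\varphi T$) with the one at infinity (coming from the $L^1$ estimate on the derivatives of $w$), and carry out the homogeneity bookkeeping that makes $\partial^\alpha(x^\beta w)$ summable for $|\alpha|$ large.
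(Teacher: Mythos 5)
Your argument is correct and follows essentially the same route as the paper's: reduce to the single distribution $L_2 = \Fourier((1-\varphi)T) - \psi\,\Fourier T$ via $L_1 + L_2 = 0$, show it is represented by a smooth function (compactly supported Fourier transform near the origin, Theorem~\ref{HormanderFourier} away from it), and then obtain rapid decay by differentiating $(1-\varphi)f$ enough times to land in $L^1$ via the homogeneity at infinity. The only cosmetic difference is that the paper applies powers of the Laplacian $\Delta^j$ where you sum over all $\partial^\alpha$ with $|\alpha|=N$; both yield the same $|\xi|^{-N}$ bound.
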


\resetconstant
\begin{proof}[Proof of Lemma~\ref{fouriertransform}]
Denote the distributions in \eqref{eq892} by \(L_1\) and \(L_2\), respectively. 
Observe that \(L_1 + L_2 = 0\).
Since \(\varphi T\) is a distribution with compact support, \(\Fourier{(\varphi T)}\) can be represented by a smooth function in \(\R^d\); see \cite{Grafakos}*{Theorem~2.3.21}.
Moreover, by homogeneity of \(f\), it follows from Theorem~\ref{HormanderFourier} that \(\Fourier{T}\) satisfies 
\begin{equation*}
\langle \Fourier{T}, \phi \rangle
= \int_{\R^d} h \phi 
\quad \text{for every \(\phi \in C_c^\infty(\R^d \setminus \{0\})\),}
\end{equation*}
for some smooth function \(h \colon  \R^d \setminus \{0\} \to \CC\).
Since \(1 - \varphi = 0\) in a neighborhood of \(0\), we deduce that \((1-\varphi) \Fourier{T}\) is  represented in \(\R^d\) by the smooth function \((1 - \psi)h\).
Hence, by linearity, \(L_1\) is represented in \(\R^d\) by a smooth function \(\DistribF{L_1}\).
As \(L_2 = -L_1\), the same holds for the distribution \(L_2\) and for the sequel we write in \(\R^d \setminus \{0\}\),
\[
\DistribF{L_2} = 
\DistribF{\Fourier{((1-\varphi)T)}} - \psi \DistribF{\Fourier{T}}.
\]

Since the function \(\psi \FourierF{T}\) is smooth in \(\R^d \setminus \{0\}\) and has compact support in \(\R^d\), to show that \(\DistribF{L_2}\) is a Schwartz function, it suffices to show that 
\[
p \vcentcolon= \FourierF{((1-\varphi)T)}
= \FourierF{((1-\varphi)h)}
\] 
and its derivatives have a fast decay at infinity.
More precisely, for every multi-index \(\nu\) and every \(j \in \N\) sufficiently large, we have
\begin{equation}\label{dacayofp}
|\partial^{\nu} p(\xi)|
\le \Cl{cte919} |\xi|^{-2j}
\quad \text{for every \(|\xi| \ge 1\),}
\end{equation}
where \(\Cr{cte919} > 0\) is some constant depending on \(\nu\) and \(j\).
Since \((1-\varphi)T\) is represented in \(\R^d\) by the smooth function \((1-\varphi)h\), for any $j\in \mathbb{N}$ we have
\[
(2\pi |\xi|)^{2j}\partial^{\nu}p(\xi)=\FourierF{P_{j, \nu}},
\]
where $P_{j, \nu}(x) \vcentcolon=\Delta^j\bigl((-2\pi i x)^{\nu} (1-\varphi)h\bigr)$ and \(\Delta\) is denotes the Laplacian.
Thanks to the Leibniz rule, the fact that $\varphi$ has compact support and the homogeneity of $h$, we deduce that
\[
\int_{\mathbb{R}^d} |P_{j, \nu}| 
\le \C \int_{\partial B_1} h \dif\sigma \int_1^{\infty}\rho^{\alpha-1-|\nu|} \dif\rho,
\]
where $\alpha \in \mathbb{R}$ is the degree of homogeneity of $h$.
When $2j>d- \alpha + |\nu|$ we obtain $P_{j, \nu}\in L^1(\mathbb{R}^d)$ and its Fourier transform is a bounded continuous function. 
Then, given a multi-index \(\nu\), taking any $j$ as above we deduce that $p$ satisfies \eqref{dacayofp}.
It follows that \(\DistribF{L_2}\) is a Schwartz function, which completes the proof.
\end{proof}

\begin{proof}[Proof of Proposition~\ref{propositionFourierNonHomogeneous}]
	We may assume for simplicity that \(a = b = 1\), \(r = 1\) and \(R = 2\).
	Given a radial function  $\varphi_{1}\in C^{\infty}_c(\mathbb{R}^d)$ equal to $1$ in the ball $B_{1}(0)$ and $0$ in the complement of the ball $B_{2}(0)$, we decompose \(g\) as follows
	\begin{equation} \label{gdecomposition}
	  g = g_{1} + g_{2} + g_{3},  
	\end{equation}
	where
\[
g_1 
\vcentcolon= \varphi_1 g 
= \varphi_1 f_{\alpha} 
\quad \text{and} \quad 
g_2 
\vcentcolon={} (1-\varphi_2) g
=(1-\varphi_2) f_{\beta}
\]
and \(f_{t}(x) = 1/\abs{x}^{d - t}\) with \(t = \alpha, \beta\).{}
Note that \(g_3 = g-g_1-g_2\) is smooth, vanishes in a neighborhood of \(0\) and has compact support.
We may thus associate to \(g_{3}\) a distribution \(\DistribS{g_{3}}\) defined by integration with respect to \(g_{3}\).{}
Then, by definition \(\DistribS{g_{3}}\) is represented by \(g_{3}\) in \(\R^{d}\) and its Fourier transform \(\Fourier{\DistribS{g_{3}}}\) is represented in \(\R^{d}\) by
\[{}
\FourierF{\DistribS{g_{3}}} = \widehat{g_{3}}\,.
\]
Since \(0 < \alpha < d\), by Example~\ref{exampleHomogeneous} the tempered distribution \(T_{\alpha}\) is represented by \(f_{\alpha}\) in \(\R^{d}\).{}
We then have that the tempered distribution \(\DistribS{g_{1}} \vcentcolon= \varphi_{1}T_{\alpha}\) is represented by \(g_{1} = \varphi_{1} f_{\alpha}\) in \(\R^{d}\).{}
We then deduce from Lemma~\ref{fouriertransform} applied with \(\varphi = \psi = \varphi_{1}\) that there exists a Schwartz function \(\eta_{1}\) representing a distribution \(\DistribS{\eta_{1}}\) such that
\[{}
\Fourier{\DistribS{g_{1}}}
= (1 - \varphi_{1}) \Fourier{T_{\alpha}} + \DistribS{\eta_{1}}.
\]
Since \(\Fourier{T_{\alpha}}\) is represented in \(\R^{d}\) by \(c_{\alpha}/\abs{\xi}^{\alpha}\), we deduce that \(\Fourier{\DistribS{g_{1}}}\) is represented in \(\R^{d}\) by
\begin{equation}\label{helphelp}
\FourierF{\DistribS{g_{1}}}
= (1 - \varphi_{1}) \frac{c_{\alpha}}{\abs{\xi}^{\alpha}} + \eta_{1}.
\end{equation}
Finally, since \(\beta < d\) and \(\beta \ne -2\ell\) for any \(\ell \in \N\), by Examples~\ref{exampleHomogeneous}, \ref{exampleFourierHomogeneousNegative} and~\ref{exampleFourier-2k} the tempered distribution \(T_{\beta}\) is represented by \(f_{\beta}\) in \(\R^{d} \setminus \{0\}\).{}
Since \(1 - \varphi_{1}\) vanishes in a neighborhood of \(0\), we then have that the tempered distribution \(\DistribS{g_{2}} \vcentcolon= (1 - \varphi_{1})T_{\beta}\) is represented by \(g_{2} = (1 - \varphi_{1}) f_{\beta}\) in \(\R^{d}\).{}
Moreover, from Lemma~\ref{fouriertransform} there exists a Schwartz function \(\eta_{2}\) representing a distribution \(\DistribS{\eta_{2}}\) such that
\[{}
\Fourier{\DistribS{g_{2}}}
= \varphi_{1} \Fourier{T_{\beta}} + \DistribS{\eta_{2}}\,.
\]
Observe that \(\Fourier{T_{\beta}}\) is represented in \(\R^{d}\) by \(c_{\beta}/\abs{\xi}^{\beta}\).
We deduce that \(\Fourier{\DistribS{g_{2}}}\) is represented in \(\R^{d}\) by
\[{}
\FourierF{\DistribS{g_{2}}}
= \varphi_{1} \frac{c_{\beta}}{\abs{\xi}^{\beta}} + \eta_{2}\,.
\]
We conclude that the tempered distribution \(S_{g} \vcentcolon= S_{g_{1}} + S_{g_{2}} + S_{g_{3}}\) is represented by \(g\) in \(\R^{d}\) and its Fourier transform is represented in \(\R^{d}\) by a function of the form in the statement.
\end{proof}

Let us now consider the case $\beta$ equal to a negative even integer.

\begin{proposition}
\label{propositionFourierNonHomogeneousb2k}
Take \(0 < \alpha < d\) and  $\beta =-2\ell$ for some \(\ell \in \N\).
Let \(g \in C^{\infty}(\R^d \setminus \{0\})\) be a radial function such that
\[
g(x) = 
\begin{cases}
{a}/{|x|^{d - \alpha}}
& \text{if\/ \(|x| \le r\),}\\
{b}/{|x|^{d - \beta}}
& \text{if\/ \(|x| \ge R\),}
\end{cases}
\]
where \(0 < r < R\) and \(a, b > 0\).
Then, there exists a tempered distribution \(\DistribS{g}\) in \(\R^{d}\) that is represented in \(\R^{d}\) by \(g\) and its Fourier transform \(\Fourier{\DistribS{g}}\) is represented in \(\R^{d}\) by a radial function \(\FourierF{\DistribS{g}} \in C^{\infty}(\R^d \setminus \{0\})\) that satisfies
\[
\FourierF{\DistribS{g}}(\xi)
= 
\begin{cases}
(-A_{2\ell}\log|\xi|+\lambda_{2\ell})|\xi|^{2\ell}
+\eta(\xi) 
& \text{for \(|\xi|\le r\),}\\
ac_{\alpha}/|\xi|^{\alpha} + \zeta(\xi) 
& \text{for \(|\xi|\ge R\),}
 \end{cases}
\]
where \(\eta\) and \(\zeta\) are Schwartz functions.
\end{proposition}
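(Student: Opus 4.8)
The plan is to follow the proof of Proposition~\ref{propositionFourierNonHomogeneous} almost word for word; the only genuine change is in the treatment of the part of $g$ supported away from the origin, because when $\beta = -2\ell$ is a negative even integer the function $f_\beta = 1/|x|^{d - \beta}$ does not extend to a \emph{homogeneous} tempered distribution in $\R^d$, and one must use in place of the distributions of Examples~\ref{exampleHomogeneous} or~\ref{exampleFourierHomogeneousNegative} the distribution $T_\beta$ built in Example~\ref{exampleFourier-2k}, whose Fourier transform carries a logarithmic factor. As there, I would reduce to $r = 1$ and $R = 2$ (the dependence on $a$ and $b$ being routine and suppressed), fix a radial cut-off $\varphi_1 \in C_c^\infty(\R^d)$ with $\varphi_1 \equiv 1$ on $B_1$ and $\supp \varphi_1 \subset B_2$, and decompose
\[
g = g_1 + g_2 + g_3, \qquad g_1 \vcentcolon= \varphi_1 f_\alpha,\quad g_2 \vcentcolon= (1 - \varphi_1) f_\beta,\quad g_3 \vcentcolon= g - g_1 - g_2,
\]
where $g_3$ is smooth, compactly supported, and vanishes in a neighbourhood of $0$.

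The smooth piece $g_3$ is treated as before: the distribution $\DistribS{g_3}$ given by integration against $g_3$ has $\Fourier{\DistribS{g_3}}$ represented by the Schwartz function $\widehat{g_3}$. The interior piece $g_1$ is again identical to the earlier proof: since $0 < \alpha < d$, Example~\ref{exampleHomogeneous} provides a tempered distribution $T_\alpha$ represented by $f_\alpha$ with $\Fourier{T_\alpha}$ represented by $c_\alpha/|\xi|^\alpha$, and Lemma~\ref{fouriertransform}, applied with $T = T_\alpha$ and $\varphi = \psi = \varphi_1$, yields a Schwartz function $\eta_1$ such that $\DistribS{g_1} \vcentcolon= \varphi_1 T_\alpha$, which is represented by $g_1$, has $\Fourier{\DistribS{g_1}}$ represented by $(1 - \varphi_1) c_\alpha/|\xi|^\alpha + \eta_1$. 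For the tail $g_2 = (1 - \varphi_1) f_\beta$ I would take $T_\beta$ to be the distribution of Example~\ref{exampleFourier-2k}: it is represented in $\R^d \setminus \{0\}$ by the homogeneous function $f_\beta$, so the hypothesis of Lemma~\ref{fouriertransform} is met, while $\Fourier{T_\beta}$ is represented in $\R^d$ by $(-A_{2\ell}\log|\xi| + \lambda_{2\ell})|\xi|^{2\ell}$, a function smooth on $\R^d \setminus \{0\}$. Applying Lemma~\ref{fouriertransform} with $T = T_\beta$ and $\varphi = \psi = \varphi_1$ then gives a Schwartz function $\eta_2$ such that $\DistribS{g_2} \vcentcolon= (1 - \varphi_1) T_\beta$, which is represented by $g_2$ (as $1 - \varphi_1$ vanishes near $0$), has $\Fourier{\DistribS{g_2}}$ represented by $\varphi_1 (-A_{2\ell}\log|\xi| + \lambda_{2\ell})|\xi|^{2\ell} + \eta_2$.

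Setting $\DistribS{g} \vcentcolon= \DistribS{g_1} + \DistribS{g_2} + \DistribS{g_3}$, which is a tempered distribution represented by $g$ in $\R^d$, its Fourier transform is represented in $\R^d$ by
\[
(1 - \varphi_1)\frac{c_\alpha}{|\xi|^\alpha} + \varphi_1 \bigl(-A_{2\ell}\log|\xi| + \lambda_{2\ell}\bigr)|\xi|^{2\ell} + \eta_1(\xi) + \eta_2(\xi) + \widehat{g_3}(\xi),
\]
a radial function that is smooth in $\R^d \setminus \{0\}$. On $|\xi| \le 1$, where $\varphi_1 \equiv 1$, the first term drops out and there remains $(-A_{2\ell}\log|\xi| + \lambda_{2\ell})|\xi|^{2\ell} + \eta(\xi)$; on $|\xi| \ge 2$, where $\varphi_1 \equiv 0$, the second term drops out and there remains $c_\alpha/|\xi|^\alpha + \zeta(\xi)$; in both cases the remainder is the Schwartz function $\eta_1 + \eta_2 + \widehat{g_3}$. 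This is the claimed form. The one place where the argument departs from the proof of Proposition~\ref{propositionFourierNonHomogeneous} --- and the step I expect to require the most care --- is verifying that Lemma~\ref{fouriertransform} still applies to $T_\beta$ despite the logarithm in $\Fourier{T_\beta}$: its proof uses only that $T_\beta$ is represented by a homogeneous function in $\R^d \setminus \{0\}$ (true here, with $f = f_\beta$) and that $\Fourier{T_\beta}$ is represented away from the origin by a smooth function (here $(-A_{2\ell}\log|\xi| + \lambda_{2\ell})|\xi|^{2\ell}$, by Example~\ref{exampleFourier-2k}), so no homogeneity of $\Fourier{T_\beta}$ is needed. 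Concretely, the rapid decay of $\eta_2$ and of its derivatives is obtained, as in the proof of Lemma~\ref{fouriertransform}, by applying $\Delta^j$ to $(-2\pi i x)^\nu (1 - \varphi_1) f_\beta$ and using the homogeneity of $f_\beta$ at infinity, while the smoothness of $\eta_2$ on all of $\R^d$ follows from the identity $\eta_2 = (1 - \varphi_1)\bigl(-A_{2\ell}\log|\xi| + \lambda_{2\ell}\bigr)|\xi|^{2\ell} - \widehat{\varphi_1 T_\beta}$, both summands being smooth on $\R^d$.
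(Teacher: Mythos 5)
Your proof matches the paper's: both reuse the decomposition $g = g_1 + g_2 + g_3$ from Proposition~\ref{propositionFourierNonHomogeneous}, treat $g_1$ and $g_3$ exactly as before, and for the tail $g_2 = (1-\varphi_1)f_\beta$ substitute the distribution $T_\beta$ of Example~\ref{exampleFourier-2k} and its Fourier transform $(-A_{2\ell}\log|\xi|+\lambda_{2\ell})|\xi|^{2\ell}$, then invoke Lemma~\ref{fouriertransform}. Your closing remark --- that Lemma~\ref{fouriertransform} still applies even though $\Fourier{T_\beta}$ is no longer homogeneous, because its proof only uses homogeneity of $f_\beta$ (which drives the decay estimate on $\FourierF{((1-\varphi)T_\beta)}$) plus smoothness of $\Fourier{T_\beta}$ away from the origin (supplied directly by Example~\ref{exampleFourier-2k}) --- is a genuine subtlety the paper passes over silently with the words ``again by Lemma~\ref{fouriertransform}''. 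Theorem~\ref{HormanderFourier}, cited in the proof of that lemma, nominally asserts a \emph{homogeneous} representative for $\Fourier{T}$, which is false here because of the logarithm; your argument sidesteps this correctly.
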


\begin{proof}
Following the same notation of the proof of Proposition~\ref{propositionFourierNonHomogeneous}, let us decompose $g$ as in \eqref{gdecomposition}. The treatment of $g_1$ and $g_3$ is the same as before.
To deal with $g_2$ in the case \(\beta = -2\ell\), for some \(\ell \in \N\), notice that by Examples~\ref{exampleFourier-2k}, \(T_{\beta}\) is again represented by \(f_{\beta}\) in \(\R^{d} \setminus \{0\}\).{} Therefore, the tempered distribution \(\DistribS{g_{2}} \vcentcolon= (1 - \varphi_{1})T_{\beta}\) is represented by \(g_{2} = (1 - \varphi_{1}) f_{\beta}\) in \(\R^{d}\)
and, again by Lemma~\ref{fouriertransform}, there exists a Schwartz function \(\eta_{2}\) representing a distribution \(\DistribS{\eta_{2}}\) such that
\[{}
\Fourier{\DistribS{g_{2}}}
= \varphi_{1} \Fourier{T_{\beta}} + \DistribS{\eta_{2}}\,.
\]
Since in this case \(\Fourier{T_{\beta}}\) is represented in \(\R^{d}\) by \((-A_{\ell}\log|\xi|+\lambda_\ell)|\xi|^{2\ell}\), we deduce that \(\Fourier{\DistribS{g_{2}}}\) is represented in \(\R^{d}\) by
\[{}
\FourierF{\DistribS{g_{2}}}
= \varphi_{1} (-A_{\ell}\log|\xi|+\lambda_\ell)|\xi|^{2\ell} + \eta_{2}\,.
\qedhere
\]
\end{proof}

\section{Existence of the representation formula}
\label{sectionexistenceofthekernel}

We prove in this section the representation formula in Theorem~\ref{introrepfor} and then apply it to establish the nonlocal Sobolev inequality associated to the sum of Lebesgue spaces (Theorems~\ref{introsobemb1} and~\ref{introsobemb2}).
We rely on the next proposition with \(\gamma = 2\), which is possible as we assume that \(d \ge 3\).

\begin{proposition}
\label{propositionExistenceVFirst}
Take \(0 < \alpha< \gamma < d\) and \(\beta < \min{\{\gamma, 1\}}\) with $\beta\neq 0$. 
If \(g \in C^\infty( \R^d \setminus \{0\})\) is a positive radial function  that satisfies \eqref{assurepositivityofFourier} and such that
\[
g(x) = 
\begin{cases}
{a}/{|x|^{d - \alpha}}
& \text{if\/ \(|x| \le r\),}\\
{b}/{|x|^{d - \beta}}
& \text{if\/ \(|x| \ge R\),}
\end{cases}
\]
where \(0 < r < R\) and \(a > 0, b\ge0\),
then there exists a radial function \(\omega \in C^\infty( \R^d \setminus \{0\} )\) such that, for every \(x \in \R^d \setminus \{0\}\), we have \(\abs{\omega} * g(x) < \infty\) and
\begin{equation}\label{convolutionequation}
\omega * g (x)
= \frac{1}{|x|^{d-\gamma}}\,.
\end{equation}
Moreover, if $b>0$, for every multi-index \(\nu\), 
\begin{equation}\label{omegadecay}
\abs{\partial^{\nu}\omega(x)} 
\le 
\begin{cases}
C/|x|^{d - (\gamma - \alpha) + |\nu|} 
& \text{if \(|x| \le 1\),}\\
C/|x|^{d - (\gamma - \beta^+) + |\nu|}
& \text{if \(|x| \ge 1\),}
\end{cases}
\end{equation}
where $\beta_+=\max\{\beta,0\}$, while, if $b=0$,
\begin{equation}\label{omegadecayb=0}
\abs{\partial^{\nu}\omega(x)} 
\le 
\begin{cases}
C/|x|^{d - (\gamma - \alpha) + |\nu|} 
& \text{if \(|x| \le 1\),}\\
C/|x|^{d - \gamma  + |\nu|}
& \text{if \(|x| \ge 1\).}
\end{cases}
\end{equation}
\end{proposition}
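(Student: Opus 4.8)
The plan is to reduce \eqref{convolutionequation} to an identity on the Fourier side. Since \(0<\gamma<d\), Example~\ref{exampleHomogeneous} shows that \(f_\gamma(x)=1/|x|^{d-\gamma}\) defines a tempered distribution \(T_\gamma\) with \(\DistribF{\Fourier{T_\gamma}}(\xi)=c_\gamma/|\xi|^\gamma\). By Propositions~\ref{propositionFourierNonHomogeneous} and~\ref{propositionFourierNonHomogeneousb2k} — or, when \(b=0\), simply because \(g\) is then smooth with compact support, so that its Fourier transform is real-analytic with value \(\int_{\R^d}g>0\) at the origin — there is a tempered distribution \(\DistribS{g}\) represented in \(\R^d\) by \(g\) whose Fourier transform is represented by a radial function, denoted \(\widehat g\in C^\infty(\R^d\setminus\{0\})\), satisfying \(\widehat g(\xi)=ac_\alpha/|\xi|^\alpha+\zeta(\xi)\) for \(|\xi|\ge R\), and, for \(|\xi|\le r\), \(\widehat g(\xi)=bc_\beta/|\xi|^\beta+\eta(\xi)\) when \(\beta\notin-2\N\cup\{0\}\) (respectively \((-A_{2\ell}\log|\xi|+\lambda_{2\ell})|\xi|^{2\ell}+\eta(\xi)\) when \(\beta=-2\ell\)), with \(\eta,\zeta\) Schwartz. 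Since \eqref{assurepositivityofFourier} holds and, because \(\beta<1\), the function \(\rho\mapsto\rho^{d-1}g(\rho x)\) also tends to \(0\) as \(\rho\to\infty\), Proposition~\ref{propositionPositivity} applies and gives \(\widehat g>0\) on \(\R^d\setminus\{0\}\).

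I then set \(h\vcentcolon=c_\gamma|\xi|^{-\gamma}/\widehat g\), which is radial and lies in \(C^\infty(\R^d\setminus\{0\})\). Dividing \(c_\gamma|\xi|^{-\gamma}\) by the expansions of \(\widehat g\) — that is, expanding \(1/\widehat g\) as a geometric series about its leading term and using that the remaining terms are either homogeneous of strictly higher degree or Schwartz — yields two-scale asymptotics for \(h\): as \(|\xi|\to\infty\) one gets \(h(\xi)=\tfrac{c_\gamma}{ac_\alpha}|\xi|^{-(\gamma-\alpha)}\) plus a rapidly decreasing remainder, with \(0<\gamma-\alpha<d\); as \(|\xi|\to0\) the leading term is \(\tfrac{c_\gamma}{bc_\beta}|\xi|^{-(\gamma-\beta)}\) when \(\beta>0\) and \(\tfrac{c_\gamma}{\widehat g(0)}|\xi|^{-\gamma}\) when \(\beta\le0\) or \(b=0\), followed by terms of strictly smaller singularity (a subleading logarithmic term also appears when \(\beta=-2\ell\)). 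In every case the singularity of \(h\) at the origin has order \(\gamma-\beta_+<d\), so \(h\in L^1\loc(\R^d)\) with power decay at infinity, hence defines a radial tempered distribution; I put \(\omega\vcentcolon=\Fourier^{-1}h\).

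It remains to show that \(\omega\) is represented by a \(C^\infty\) function on \(\R^d\setminus\{0\}\) obeying \eqref{omegadecay}--\eqref{omegadecayb=0}, and that \(\omega*g=f_\gamma\). Fixing a radial cutoff \(\chi\in C_c^\infty(\R^d)\) equal to \(1\) near \(0\), I would decompose \(h=h_0+h_\infty+h_{\mathrm{rem}}\), where \(h_0\vcentcolon=\chi\cdot(\text{finitely many leading terms of the expansion of }h\text{ at }0)\) has compact support, \(h_\infty\vcentcolon=(1-\chi)\tfrac{c_\gamma}{ac_\alpha}|\xi|^{-(\gamma-\alpha)}\) is a smooth symbol of order \(-(\gamma-\alpha)\), and \(h_{\mathrm{rem}}\) is of class \(C^K\) and rapidly decreasing together with its derivatives, for \(K\) as large as desired. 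Then \(\Fourier^{-1}h_0\) is real-analytic (Paley--Wiener) and decays like \(|x|^{-(d-(\gamma-\beta_+))}\) at infinity — this uses Example~\ref{exampleHomogeneous} to invert the homogeneous leading term, the localization error \((1-\chi)|\xi|^{-(\gamma-\beta_+)}\) being a negative-order symbol whose inverse transform is rapidly decreasing; \(\Fourier^{-1}h_\infty\) is \(C^\infty\) away from \(0\), rapidly decreasing at infinity, with singularity \(\sim|x|^{-(d-(\gamma-\alpha))}\) at \(0\); and \(\Fourier^{-1}h_{\mathrm{rem}}\) is of class \(C^{K'}\) and decays faster than \(|x|^{-K}\). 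Summing the three contributions gives \(\omega\in C^\infty(\R^d\setminus\{0\})\) together with the bound in \eqref{omegadecay} when \(b>0\) or \eqref{omegadecayb=0} when \(b=0\); the bounds on \(\partial^\nu\omega\) follow by running the same decomposition for \(\partial^\nu\omega=\Fourier^{-1}\bigl((2\pi\imath\xi)^\nu h\bigr)\), whose two-scale exponents are shifted by \(|\nu|\). These estimates, together with \(|g(x)|\le C|x|^{-(d-\alpha)}\) near \(0\) and \(|g(x)|\le C|x|^{-(d-\beta)}\) (or compact support) at infinity, imply that \(|\omega|*g(x)<\infty\) for every \(x\ne0\), by estimating the \(y\)-integral separately near \(0\), near \(x\) and near \(\infty\) (using \(\alpha>0\), \(\gamma-\alpha>0\), \(\gamma-\beta_+<d\)). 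Finally, \(h\,\widehat g=c_\gamma|\xi|^{-\gamma}=\widehat{f_\gamma}\) as elements of \(L^1\loc\), so applying this identity to the mollifications \(\rho_\varepsilon*\omega\) — for which \(\Fourier(\rho_\varepsilon*\omega)=\widehat{\rho_\varepsilon}\,h\in L^1\) and all Fourier and convolution manipulations are elementary — and letting \(\varepsilon\to0\) by means of the pointwise bounds on \(\omega\) and \(g\) yields \(\omega*g=f_\gamma\) pointwise on \(\R^d\setminus\{0\}\), which is \eqref{convolutionequation}.

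The main obstacle is the regularity-and-decay analysis of \(\omega=\Fourier^{-1}h\): the function \(h\) is only \emph{asymptotically} homogeneous at \(0\) and at \(\infty\), with different rates, so one must organize the iterative subtraction of homogeneous leading terms near the origin so that it terminates at a remainder smooth enough and decaying fast enough to be negligible, while simultaneously tracking the several regimes for \(\beta\) (positive, negative non-even integer, negative even integer with its logarithmic correction, and the degenerate case \(b=0\)) and arbitrary derivatives \(\partial^\nu\). A secondary, more routine point is the distributional bookkeeping needed to pass from the pointwise identity \(h\,\widehat g=\widehat{f_\gamma}\) to the convolution identity \(\omega*g=f_\gamma\).
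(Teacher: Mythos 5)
Your plan mirrors the paper's proof almost step for step: reduce to the Fourier side via Propositions~\ref{propositionFourierNonHomogeneous}--\ref{propositionFourierNonHomogeneousb2k} and Proposition~\ref{propositionPositivity}, set \(h = c_\gamma|\xi|^{-\gamma}/\widehat g\) (the paper's \(H\)), split it into a piece localized near the origin, a piece living near infinity, and a well-behaved remainder, invert each piece, and then do the distributional bookkeeping via mollification to turn the pointwise Fourier-side identity \(h\,\widehat g = c_\gamma|\xi|^{-\gamma}\) into \(\omega*g = f_\gamma\). That is exactly the paper's strategy.

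The one substantive issue is that what you flag as ``the main obstacle'' --- organizing the iterative subtraction of leading terms near the origin so that the remainder has controlled smoothness and decay, across the regimes \(\beta>0\), \(\beta<0\) non-even, \(\beta=-2\ell\), and \(b=0\) --- is precisely the content you leave unproved. In your decomposition \(h=h_0+h_\infty+h_{\mathrm{rem}}\) you assert that \(h_{\mathrm{rem}}\) can be made ``of class \(C^K\) and rapidly decreasing with \(K\) as large as desired'' by taking enough terms, but you do not verify this, nor do you explain why a \emph{finite} number of terms suffices; that is the crux. The paper closes this gap with Lemma~\ref{Fcheck}: one truncates the geometric expansion of \(1/(a(\xi)+b(\xi)|\xi|^\theta)\) after \(k>1/\theta\) terms, computes the remainder \(B_k\) exactly by induction, and then uses the direct \(L^1\)-integrability of \(\xi^\nu\partial^\mu B_k\) (for \(|\mu|<k\theta+d-\delta+|\nu|\)) to get the decay of \(\widecheck{B_k}\); the point is that \(k\theta\ge 1\) already gives decay at the rate \(d-[\delta]+|\nu|\), which combined with smoothness of \(\widecheck{B_k}\) yields the required \((1+|x|)^{-(d-\delta+|\nu|)}\) bound --- there is no need to iterate to arbitrarily high \(K\). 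Your sketch also presumes but does not justify that when \(\beta<0\) (so that \(\widehat g\) has a finite limit at the origin) the Schwartz correction \(\zeta_1\) satisfies \(\zeta_1(0)>0\); this follows because then \(g\in L^1\) and \(\widehat g(0)=\int g>0\), but it must be noted before one can divide. Finally, a minor slip: when \(b=0\) the function \(g\) still has the singularity \(a/|x|^{d-\alpha}\) at the origin, so it is not ``smooth with compact support,'' only compactly supported, integrable and smooth away from \(0\); the conclusion you draw (that \(\widehat g\) is analytic and positive at \(0\)) is still correct by Paley--Wiener.
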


Before proving Proposition~\ref{propositionExistenceVFirst}, we need some technical lemmas.

\begin{lemma}\label{helpinglemma}
Let $0 < \delta < d$ and $\phi \in C^{\infty}_c(\mathbb{R}^d)$ be a radial function. 
If \(S \colon  \R^d \setminus \{0\} \to \R\) is the function defined by
\[
S(\xi) = \frac{\phi(\xi)}{|\xi|^{\delta}},
\]
then, for every multi-index \(\nu\) and every \(x \in \R^d\),
\[
|\partial^{\nu}\widecheck{S}(x)|
\le\frac{C'}{(1+|x|)^{d -\delta +|\nu|}}\,.
\]
\end{lemma}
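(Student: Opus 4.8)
The statement concerns the inverse Fourier transform of $S(\xi) = \phi(\xi)/|\xi|^\delta$ with $\phi$ radial, smooth, compactly supported, and $0 < \delta < d$. Since $\delta < d$, the function $S$ is in $L^1(\R^d)$ (the singularity at the origin is integrable and $\phi$ truncates at infinity), so $\widecheck{S}$ is a bounded continuous function, and this already gives the estimate for $|x| \le 1$ in the case $|\nu| = 0$. The work is to get decay at infinity together with the stated gain coming from the homogeneity order $\delta$, uniformly in the derivatives $\partial^\nu$.

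First I would reduce matters to the function $1/|\xi|^\delta$ itself. Write $\phi = \phi(0) + (\phi - \phi(0))$; the second piece vanishes at the origin, so $(\phi(\xi) - \phi(0))/|\xi|^\delta$ is a compactly supported function that is Hölder (indeed $C^\infty$ away from $0$ and bounded near $0$), and more generally one can iterate this splitting using the Taylor expansion of $\phi$ at $0$ up to sufficiently high order $m$: the remainder term $R_m(\xi)/|\xi|^\delta$ is then $C^{m-1}$-ish and compactly supported, so its inverse transform decays faster than any prescribed polynomial rate — fast enough to be absorbed into $C'/(1+|x|)^{d-\delta+|\nu|}$ once $m$ is large. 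The lower-order terms are finite linear combinations of $P(\xi)/|\xi|^\delta$ with $P$ a homogeneous polynomial, multiplied by the fixed cutoff; since $\phi$ is radial these are actually just $\phi(0)\psi(\xi)/|\xi|^\delta$ plus higher radial corrections $|\xi|^{2j}\psi(\xi)/|\xi|^\delta = \psi(\xi)/|\xi|^{\delta - 2j}$, i.e. the same type of object with $\delta$ replaced by $\delta - 2j < d$, so it suffices to treat one model term $\psi(\xi)/|\xi|^\delta$ with $\psi \in C_c^\infty$ radial equal to $1$ near $0$.

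For this model term, split $\psi(\xi)/|\xi|^\delta = 1/|\xi|^\delta - (1-\psi(\xi))/|\xi|^\delta$. The first summand has inverse Fourier transform exactly $c_\delta/|x|^{d-\delta}$ by Example~\ref{exampleHomogeneous} (formula \eqref{eqHomogeneousFourier}), and $\partial^\nu (c_\delta |x|^{-(d-\delta)}) $ is bounded by $C/|x|^{d-\delta+|\nu|}$, which is dominated by the right-hand side for $|x| \ge 1$. The second summand, $(1-\psi)/|\xi|^\delta$, is smooth everywhere, homogeneous of degree $-\delta$ for large $|\xi|$, and vanishes near $0$; this is precisely the setting where the argument in the proof of Lemma~\ref{fouriertransform} applies. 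Namely, $(2\pi|x|)^{2j}\partial^\nu \widecheck{S_2}(x)$ is the inverse Fourier transform of $\Delta^j\big((2\pi i\xi)^\nu (1-\psi(\xi))|\xi|^{-\delta}\big)$, whose $L^1$ norm is finite as soon as $2j > |\nu| - \delta + d$ (by the homogeneity estimate $\int_1^\infty \rho^{|\nu|-\delta - 1 - 0}\,d\rho$-type bound), giving $|\partial^\nu \widecheck{S_2}(x)| \le C/|x|^{2j}$ for $|x|\ge 1$ — again faster than needed. Combining the contributions and matching with the trivial bound $|\partial^\nu \widecheck{S}(x)| \le \int |\xi|^{|\nu|} |S(\xi)|\,d\xi < \infty$ for $|x| \le 1$ yields the stated inequality on all of $\R^d$.

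The main obstacle is bookkeeping rather than a genuine difficulty: one must choose the Taylor order $m$ and the number of Laplacians $j$ large enough, depending on $\nu$ and on $d-\delta$, so that all the "fast-decaying" error terms beat $(1+|x|)^{-(d-\delta+|\nu|)}$, and one must keep track that the only term producing the exact decay rate $d-\delta$ (and not something faster) is the homogeneous piece $c_\delta/|x|^{d-\delta}$, so that the exponent in the conclusion is sharp and consistent with the later use of this lemma in the proof of Proposition~\ref{propositionExistenceVFirst}. A minor technical point is that $\partial^\nu$ acting on $|x|^{-(d-\delta)}$ lowers the exponent by exactly $|\nu|$ (with constants depending on $\nu$ and $d, \delta$), which is a direct computation.
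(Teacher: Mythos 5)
Your proof is correct and follows essentially the same strategy as the paper's: Taylor expand the radial $\phi$ at the origin (only even powers of $|\xi|$ appear), so that the principal part reduces to a cutoff multiple of the homogeneous kernel $1/|\xi|^{\delta-2j}$, whose inverse transform is explicit modulo Schwartz errors via Example~\ref{exampleHomogeneous} and Lemma~\ref{fouriertransform}, while the Taylor remainder is handled by differentiating under the transform and counting integrability near the origin. The only real difference is cosmetic: the paper fixes the Taylor order at $2$, noting that the resulting $O(|\xi|^{4-\delta})$ vanishing of the remainder already gives a decay margin uniform in $\nu$, whereas you let the order grow with $|\nu|$ and unroll Lemma~\ref{fouriertransform}'s Laplacian argument for the piece $(1-\psi)/|\xi|^{\delta}$ --- slightly more work but not incorrect.
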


\resetconstant
\begin{proof}[Proof of Lemma~\ref{helpinglemma}]
We write \(\phi\) for every \(\xi \in \R^d\) as $\phi(\xi)=h(|\xi|)\psi(\xi)$, where $h\in C^{\infty}(\mathbb{R})$ is even and $\psi \in C_c^\infty(\R^d)$ is a nonnegative radial function equal to one in $B_r$ and supported in $B_{2r}$, for some $r>0$ large enough. 
By symmetry of \(h\), the $2$nd order Taylor polynomial of $h$ at zero has the form \( \lambda_0 + \lambda_1 t^{2}\).
We then write \(S\) as
\[
S(\xi)= P(\xi) + R(\xi),
\]
where
\[
P(\xi) \vcentcolon= \frac{\lambda_0 + \lambda_1 |\xi|^{2}}{|\xi|^{\delta}}\psi(\xi)
\quad \text{and} \quad
R(\xi)\vcentcolon= \frac{h(|\xi|)-(\lambda_0 + \lambda_1 |\xi|^{2})}{|\xi|^{\delta}}\psi(\xi).
\]
By Example~\ref{exampleHomogeneous} and Lemma~\ref{fouriertransform} applied to both homogeneous terms of $P$, we conclude that
\begin{equation}
\label{eq-1142}
|\partial^{\nu}\widecheck{P}(x)|
\le\frac{\C}{(1+|x|)^{d  -\delta +|\nu|}}\,,
\end{equation}
for every \(x \in \R^m\) and every multi-index \(\nu\).
Since \(h\) is smooth and even, for any multi-indexes $\mu$ and $\nu$ the remainder \(R\) satisfies
\[
\xi^{\nu}\partial^{\mu}R(\xi)
= O(|\xi|^{-\delta + 4 + |\nu| - |\mu|}) \quad \text{as \(\xi\to 0\).}
\]
As $R$ has compact support, we deduce that for every multi-index \(\mu\) such that \(|\mu|< d - \delta + 4 + |\nu|\), we have 
\[
\int_{\R^d} |\xi|^{|\nu|}|\partial^{\mu}R| \dif \xi < \infty.
\]
This in turn implies that \(\widecheck{R}\) is smooth and
\begin{equation}
\label{eq-1155}
|\partial^{\nu}\widecheck{R}(x)|\le \frac{\C}{(1 + |x|)^{m}}\,,
\end{equation}
for every \(x \in \R^d\) and every nonnegative integer $m< d - \delta +4 + |\nu|$.
We may take in particular \(m = d  -\delta +|\nu|\), and the conclusion then follows from the combination of \eqref{eq-1142} and \eqref{eq-1155}.
\end{proof}

\begin{lemma}\label{Fcheck} 
Let $0 < \delta < d$, $\theta>0$, $a, b \in C^{\infty}(\R^d)$ be two positive radial functions and $\varphi \in C_{c}^{\infty}(\R^d)$ be a radial function.
If \(H \colon  \R^d \setminus \{0\} \to \R\) is the function
\begin{equation*}
   H(\xi)\vcentcolon=
\dfrac{\varphi(\xi)}{|\xi|^{\delta}(a(\xi) + b(\xi) |\xi|^{\theta})}\,,
\end{equation*}
then, for every multi-index \(\nu\),
\[
|\partial^{\nu}\widecheck{H}(x)|
\le\frac{C}{(1+|x|)^{d -\delta +|\nu|}}\,.
\]
\end{lemma}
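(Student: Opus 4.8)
The plan is to mimic the proof of Lemma~\ref{helpinglemma}; the new feature is that the factor $1/(a(\xi)+b(\xi)|\xi|^{\theta})$ is in general not smooth at the origin (unless $\theta$ is an even integer), so it cannot be absorbed into the numerator of a function of the type treated there, and I would get around this by peeling off finitely many terms of a geometric expansion of the denominator. First I would fix a radial $\chi\in C_c^{\infty}(\R^{d})$ with $\chi\equiv1$ near $0$ and supported in a ball $B_{2\rho}$. Since $(1-\chi)H$ is smooth with compact support, $\widecheck{(1-\chi)H}$ is a Schwartz function and already satisfies the desired bound for every $\nu$, so it suffices to treat $\chi H$; in other words I may assume that $\varphi$ is supported in $B_{2\rho}$. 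Next I would fix an integer $K$ with $(K+1)\theta\ge2$ and, using that $a>0$ everywhere together with the finite geometric sum,
\[
\frac{1}{a(\xi)+b(\xi)|\xi|^{\theta}}
=\frac{1}{a(\xi)}\sum_{k=0}^{K}\Bigl(-\tfrac{b(\xi)}{a(\xi)}\Bigr)^{\!k}|\xi|^{k\theta}
+\frac{1}{a(\xi)}\,\frac{\bigl(-(b(\xi)/a(\xi))|\xi|^{\theta}\bigr)^{K+1}}{1+(b(\xi)/a(\xi))|\xi|^{\theta}},
\]
so that multiplying by $\varphi(\xi)/|\xi|^{\delta}$ produces a decomposition
\[
H=\sum_{k=0}^{K}M_{k}+R,
\qquad
M_{k}(\xi):=\Phi_{k}(\xi)\,|\xi|^{k\theta-\delta},
\quad
\Phi_{k}:=(-1)^{k}\tfrac{b^{k}}{a^{k+1}}\varphi\in C_c^{\infty}(\R^{d})\ \text{radial},
\]
with $R$ supported in $B_{2\rho}$.

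For the main terms I would distinguish three cases according to the sign of $k\theta-\delta$. If $k\theta<\delta$, then $0<\delta-k\theta<d$ and Lemma~\ref{helpinglemma} applied with $\phi=\Phi_{k}$ gives $|\partial^{\nu}\widecheck{M_{k}}(x)|\le C(1+|x|)^{-(d-\delta+k\theta+|\nu|)}\le C(1+|x|)^{-(d-\delta+|\nu|)}$, since $k\theta\ge0$. If $k\theta=\delta$, then $M_{k}=\Phi_{k}\in C_c^{\infty}(\R^{d})$ and $\widecheck{M_{k}}$ is Schwartz. If $k\theta>\delta$, I would note that $f_{\nu}(\xi):=\xi^{\nu}|\xi|^{k\theta-\delta}$ is smooth on $\R^{d}\setminus\{0\}$ and homogeneous of degree $|\nu|+k\theta-\delta$, a number which is positive and in particular $>-d$, so $f_{\nu}\in L^{1}_{\mathrm{loc}}(\R^{d})$ defines a homogeneous tempered distribution $T_{f_{\nu}}$; then $\partial^{\nu}\widecheck{M_{k}}=(2\pi i)^{|\nu|}\widecheck{(\Phi_{k}f_{\nu})}=(2\pi i)^{|\nu|}\,\widecheck{\Phi_{k}}*\widecheck{T_{f_{\nu}}}$, and by Theorem~\ref{HormanderFourier} the distribution $\widecheck{T_{f_{\nu}}}$ agrees in $\R^{d}\setminus\{0\}$ with a smooth function $h_{\nu}$ homogeneous of degree $-d-(|\nu|+k\theta-\delta)<-d$; splitting $\widecheck{T_{f_{\nu}}}=\zeta\widecheck{T_{f_{\nu}}}+(1-\zeta)h_{\nu}$ with a cutoff $\zeta\equiv1$ near $0$, the term $\widecheck{\Phi_{k}}*(\zeta\widecheck{T_{f_{\nu}}})$ is Schwartz (a Schwartz function convolved with a compactly supported distribution) while $\widecheck{\Phi_{k}}*((1-\zeta)h_{\nu})$ is a Schwartz function convolved with an $L^{1}$ function of size $O\bigl((1+|\cdot|)^{-(d+|\nu|+k\theta-\delta)}\bigr)$, so altogether $|\partial^{\nu}\widecheck{M_{k}}(x)|\le C(1+|x|)^{-(d+|\nu|+k\theta-\delta)}\le C(1+|x|)^{-(d-\delta+|\nu|)}$, once more because $k\theta>0$.

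For the remainder, since $(b/a)|\xi|^{\theta}$ is smooth away from $0$ with derivatives $\partial^{\gamma}$ of size $O(|\xi|^{\theta-|\gamma|})$, so is $1/(1+(b/a)|\xi|^{\theta})$, whence $\partial^{\gamma}R(\xi)=O(|\xi|^{(K+1)\theta-\delta-|\gamma|})$ near $0$, while $R$ is bounded with compact support. Consequently $\widecheck{(\xi^{\nu}R)}$ is bounded, and $\Delta^{m}(\xi^{\nu}R)\in L^{1}(\R^{d})$ as soon as $2m<|\nu|+(K+1)\theta-\delta+d$, which forces $|x|^{2m}|\widecheck{(\xi^{\nu}R)}(x)|\le C$; since $(K+1)\theta\ge2$, for each $\nu$ the interval $\bigl[\tfrac12(d-\delta+|\nu|),\,\tfrac12(d-\delta+|\nu|+(K+1)\theta)\bigr)$ has length at least $1$ and hence contains an admissible integer $m$, giving $|\partial^{\nu}\widecheck{R}(x)|=(2\pi)^{|\nu|}|\widecheck{(\xi^{\nu}R)}(x)|\le C(1+|x|)^{-(d-\delta+|\nu|)}$. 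Summing the contributions of the finitely many $M_{k}$, of $R$, and of $(1-\chi)H$ then finishes the argument.

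The main obstacle is precisely the lack of smoothness of the denominator at the origin: it is what forces the geometric expansion and, with it, the appearance of the ``intermediate'' terms $\Phi_{k}(\xi)|\xi|^{k\theta-\delta}$ with $k\theta\ge\delta$, which lie outside the scope of Lemma~\ref{helpinglemma} and have to be treated through the Fourier transform of homogeneous functions of nonnegative degree. This is harmless here because the relevant degrees $|\nu|+k\theta-\delta>-d$ never lead to the logarithmic terms of Example~\ref{exampleFourier-2k}, and because only the behaviour of the corresponding transforms away from the origin is used. A secondary point is the uniformity in $\nu$: a single $K$ with $(K+1)\theta\ge2$ works because the required decay rate and the available one both grow with $|\nu|$ at the same rate.
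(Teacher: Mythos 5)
Your proof is correct, and its overall architecture --- a finite geometric expansion of the denominator producing main terms $M_k=\Phi_k(\xi)\,|\xi|^{k\theta-\delta}$ plus a remainder $R$ carrying an extra factor $|\xi|^{(K+1)\theta}$, followed by a decay estimate for $\widecheck{R}$ obtained by showing $\Delta^m(\xi^{\nu}R)\in L^1(\R^d)$ for suitable integers $m$ --- is exactly the paper's, which writes $H=H_k+B_k$ with $H_k=\sum_{j=1}^k S_j$, $S_j=\phi\,|\xi|^{(j-1)\theta-\delta}$, and treats $B_k$ in the same way. Where you go further is in the treatment of the main terms. The paper invokes Lemma~\ref{helpinglemma} for \emph{every} $S_j$, that is, for $S_j=\phi/|\xi|^{\delta'}$ with $\delta'=\delta-(j-1)\theta$, but that lemma is stated and proved only for $0<\delta'<d$, whereas for $j$ large enough one has $\delta'\le 0$. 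You correctly observe that the terms with $k\theta\ge\delta$ fall outside the lemma's hypotheses and you supply a separate argument: trivial when $k\theta=\delta$, and when $k\theta>\delta$ via Theorem~\ref{HormanderFourier} together with a splitting of $\widecheck{T_{f_\nu}}$ into a compactly supported distribution (whose convolution with the Schwartz function $\widecheck{\Phi_k}$ is Schwartz) and an $L^1$ tail that is homogeneous of degree strictly below $-d$. This closes a small gap in the paper's justification of the decay of $\widecheck{S}_j$ for $j\ge 2$, so your version is the more rigorous of the two, at the price of a lengthier case analysis. Your preliminary cutoff $(1-\chi)H$ and the slightly more conservative choice $(K+1)\theta\ge 2$ (the paper uses $k\theta>1$ and then picks $m=d-[\delta]+|\nu|$, while you use that $[\tfrac12(d-\delta+|\nu|),\tfrac12(d-\delta+|\nu|+(K+1)\theta))$ has length $\ge 1$) are harmless normalizations that lead to the same conclusion.
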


\resetconstant
\begin{proof}[Proof of Lemma~\ref{Fcheck}]
 Let $k$ be the smallest integer such that \(k > 1/\theta\) and consider the following finite sum 
\begin{equation}\label{sumj}
H_k(\xi)=\frac{1}{ a(\xi)|\xi|^{\delta}}\sum_{j=1}^{k} \Bigl(-\frac{b(\xi)}{a(\xi)} \Bigr)^{j-1}|\xi|^{(j-1)\theta}\varphi(\xi).
\end{equation}
The generic term of \eqref{sumj} is in the form
\[
S_j(\xi)=\phi(\xi)|\xi|^{(j-1)\theta-\delta},
\]
with $\phi$ a smooth radially symmetric function with compact support. Then we can apply Lemma~\ref{helpinglemma} to deduce that,
for every multi-index \(\nu\),
\[
|\partial^{\nu}\widecheck{S}_j(x)|
\le\frac{\C}{(1+|x|)^{d +(j-1)\theta-\delta +|\nu|}}\,.
\]
The leading term is the one corresponding to $j=1$.
Looking at the definition \eqref{sumj} and using the linearity of the Fourier transform we conclude that
\begin{equation}\label{Scheck}
|\partial^{\nu}\widecheck{H}_k(x)|
\le\frac{\C}{(1+|x|)^{d -\delta +|\nu|}}\,.
\end{equation}

\begin{Claim}
The reminder $B_k \vcentcolon= H -H_k$ satisfies
\begin{equation}\label{Bdefinition}
B_k(\xi)=\Bigl(-\frac{b(\xi)}{a(\xi)}\Bigr)^{k}|\xi|^{k\theta-\delta}\frac{\varphi(\xi)}{a(\xi)+b(\xi)|\xi|^{\theta}} \quad \mbox{for \(k=1, 2, \ldots\)}
\end{equation}
\end{Claim}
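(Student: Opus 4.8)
The plan is to verify \eqref{Bdefinition} by a direct algebraic computation, after recognizing the sum in \eqref{sumj} as a partial geometric series. Set
\[
q(\xi) \vcentcolon= -\frac{b(\xi)\,|\xi|^{\theta}}{a(\xi)},
\]
which is well defined for $\xi \neq 0$ since $a$ is positive. The $j$-th summand in \eqref{sumj} equals $\dfrac{\varphi(\xi)}{a(\xi)|\xi|^{\delta}}\,q(\xi)^{j-1}$, so that
\[
H_k(\xi) = \frac{\varphi(\xi)}{a(\xi)|\xi|^{\delta}} \sum_{j=1}^{k} q(\xi)^{j-1} = \frac{\varphi(\xi)}{a(\xi)|\xi|^{\delta}} \cdot \frac{1 - q(\xi)^{k}}{1 - q(\xi)}.
\]
The geometric-sum identity is legitimate pointwise because $1 - q(\xi) = \bigl(a(\xi) + b(\xi)|\xi|^{\theta}\bigr)/a(\xi)$ is strictly positive for every $\xi \neq 0$, both $a$ and $b$ being positive by the hypothesis of Lemma~\ref{Fcheck}.

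Next I would simplify $\dfrac{1}{a(\xi)|\xi|^{\delta}\,(1 - q(\xi))} = \dfrac{1}{|\xi|^{\delta}\bigl(a(\xi) + b(\xi)|\xi|^{\theta}\bigr)}$, which is exactly the factor multiplying $\varphi$ in the definition of $H$ in the statement of Lemma~\ref{Fcheck}. Hence $H_k(\xi) = H(\xi)\bigl(1 - q(\xi)^{k}\bigr)$, and therefore
\[
B_k(\xi) = H(\xi) - H_k(\xi) = H(\xi)\,q(\xi)^{k} = \frac{\varphi(\xi)}{|\xi|^{\delta}\bigl(a(\xi) + b(\xi)|\xi|^{\theta}\bigr)} \left(-\frac{b(\xi)|\xi|^{\theta}}{a(\xi)}\right)^{k},
\]
which, after pulling out the factor $|\xi|^{k\theta}$, is precisely \eqref{Bdefinition}. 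An equivalent route is induction on $k$: the base case $k=1$ is a one-line manipulation, and passing from $k$ to $k+1$ amounts to subtracting the single extra term $\dfrac{\varphi(\xi)}{a(\xi)|\xi|^{\delta}}\,q(\xi)^{k}$ from $B_k(\xi)$ and combining the two fractions.

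There is no genuine obstacle in this step — it is a purely algebraic identity. The only point deserving minor attention is that all denominators that appear, namely $a(\xi)$ and $a(\xi) + b(\xi)|\xi|^{\theta}$, are nonzero on $\R^{d} \setminus \{0\}$, which makes the geometric-series manipulation and the cancellations above valid; this is guaranteed by the positivity of $a$ and $b$. The substantive part of the lemma comes afterwards: estimating $\widecheck{B_k}$ using the decay of $B_k$ near the origin — which improves with $k$ thanks to the extra factor $|\xi|^{k\theta}$ — together with the choice of $k$ as the smallest integer with $k > 1/\theta$ already made; but that is beyond the scope of the present Claim.
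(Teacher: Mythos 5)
Your proof is correct, and it takes a cleaner route than the paper. The paper verifies \eqref{Bdefinition} by induction on $k$: it checks the base case $k=1$ directly, then shows $B_{k+1} = B_k - \frac{\varphi}{a|\xi|^{\delta}}q^k$ collapses to the claimed form. You instead recognize \eqref{sumj} as a finite geometric series with ratio $q(\xi) = -b(\xi)|\xi|^{\theta}/a(\xi)$ and sum it in closed form, so that $H_k = H(1 - q^k)$ and hence $B_k = Hq^k$ falls out in a single line. Both arguments rest on the same algebraic fact and both require noting that $1 - q = (a + b|\xi|^{\theta})/a \neq 0$ (a point you flag explicitly and the paper uses implicitly). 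Your version makes the structure $B_k = Hq^k$ visible at a glance, which also clarifies why the remainder improves with $k$ (the extra factor $|\xi|^{k\theta}$), whereas the paper's induction is marginally more self-contained but hides this. You also correctly identify that the real content of Lemma~\ref{Fcheck} lies downstream, in the decay estimate for $\widecheck{B_k}$ and the choice $k > 1/\theta$.
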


\begin{proof}[Proof of the Claim]
We proceed by induction. 
For $k=1$ we have that
\[
\begin{split}
B_1(\xi)
& = \dfrac{\varphi(\xi)}{ |\xi|^{\delta}}\dfrac{1}{a(\xi) + b(\xi) |\xi|^{\theta}}-\frac{\varphi(\xi)}{ a(\xi)|\xi|^{\delta}}\\
& = \frac{\varphi(\xi)}{ |\xi|^{\delta}}\Big(\dfrac{1}{a(\xi) + b(\xi) |\xi|^{\theta}}-\frac{1}{a(\xi)}\Big)=-\frac{b(\xi)}{a(\xi)}|\xi|^{\theta-\delta}\frac{\varphi(\xi)}{a(\xi)+b(\xi)|\xi|^{\theta}}\,.
\end{split}
\]
Take now $k\ge1$ and assume that \eqref{Bdefinition} holds. 
Then,
\[
\begin{split}
B_{k+1}(\xi)& =H-H_k-\frac{1}{ a(\xi)} \Bigl(-\frac{b(\xi)}{a(\xi)} \Bigr)^{k}|\xi|^{k\theta-\delta}\varphi(\xi)\\
& =\Bigl(-\frac{b(\xi)}{a(\xi)}\Bigr)^{k}|\xi|^{k\theta-\delta}\frac{\varphi(\xi)}{a(\xi)+b(\xi)|\xi|^{\theta}}-\frac{1}{ a(\xi)} \Bigl(-\frac{b(\xi)}{a(\xi)} \Bigr)^{k}|\xi|^{k\theta-\delta}\varphi(\xi)\\
& =\Bigl(-\frac{b(\xi)}{a(\xi)}\Bigr)^{k}|\xi|^{k\theta-\delta}\varphi(\xi)\Big(\frac{1}{a(\xi) + b (\xi)|\xi|^{\theta}}-\frac{1}{a(\xi)}\Big)\\
& =\Bigl(-\frac{b(\xi)}{a(\xi)}\Bigr)^{k+1}|\xi|^{(k+1)\theta-\delta}\frac{\varphi(\xi)}{a(\xi) + b(\xi) |\xi|^{\theta}}\,,
\end{split}
\]
and the claim is proved.
\end{proof}

The fact that \(B_k\) is integrable with compact support implies that $\widecheck{B}_k\in C^{\infty}(\mathbb{R}^d)$. To estimate the decay of $\widecheck{B}_k(x)$ as $x\to\infty$, we may rely on the integrability of derivatives of $B_k$. 
For any multi-indices $\mu$ and $\nu$, it follows that
\[
\xi^{\nu}\partial^{\mu}B_k(\xi)=O(|\xi|^{k\theta-\delta+|\nu|-|\mu|}) \quad \mbox{for } |\xi|\le 1.
\]
Using again that $B_k$ has compact support, we deduce that, if \(|\mu|<k\theta+d-\delta+|\nu|\), then $\xi^{\nu}\partial^{\mu}B_k\in L^1(\mathbb{R}^d)$. This in turn implies that
\[
|\partial^{\nu}\widecheck{B}_k(x)|\le \frac{\C}{|x|^{m}}\,,
\]
for any non-negative integer $m<k\theta-\delta+|\nu|+d$.
Since $k\theta\ge 1$, we can take $m=d-[\delta]+|\nu|$ and, recalling that $\widecheck{B}_k$ is smooth, we conclude that
\begin{equation}\label{bcheck}
\abs{\partial^{\nu}\widecheck{B}_k(x)}\le \frac{\C}{(1+|x|)^{d-[\delta]+|\nu|}}  \quad \mbox{in  } \mathbb{R}^d.
\end{equation}
Thanks to \eqref{Scheck}, \eqref{bcheck} and the fact that $H=H_k+B_k$\,, it follows that
\[
\abs{\partial^{\nu}\widecheck{H}(x)}
\le \frac{\C}{(1+|x|)^{d-\delta+|\nu|}}
 \quad \text{in \(\mathbb{R}^d\).}
 \qedhere
\]
\end{proof}

\resetconstant
\begin{proof}[Proof of Proposition~\ref{propositionExistenceVFirst}]
Assume for simplicity that \(a = b = 1\), \(r = 1\) and \(R = 2\), and let us start dealing with the case where $\beta \not\in -2\N$.
By Proposition~\ref{propositionFourierNonHomogeneous}, there exists a tempered distribution \(\DistribS{g}\) that is represented by \(g\) in \(\R^{d}\) such that its Fourier transform \(\Fourier{\DistribS{g}}\) in \(\R^{d}\) by 
\[
\FourierF{\DistribS{g}}(\xi)
= 
\begin{cases}
c_{\beta}/|\xi|^{\beta}
+\zeta_1(\xi) 
& \text{for \(|\xi|\le 1\),}\\
c_{\alpha}/|\xi|^{\alpha} + \zeta_2(\xi) 
& \text{for \(|\xi|\ge 2\),}
 \end{cases}
\]
where \(\zeta_1\) and \(\zeta_2\) are Schwartz functions. Moreover, Proposition~\ref{propositionPositivity} assures us that $\FourierF{\DistribS{g}}(\xi)>0$ for all $\xi\in\mathbb{R}^d$.

Let us define $H$ as
\begin{equation}\label{Hdefinition}
H(\xi)\vcentcolon=\frac{c_{\gamma}}{|\xi|^{\gamma}\FourierF{\DistribS{g}}(\xi)}.
\end{equation}
Now we claim that $H$ is a function that represents a tempered distribution $\DistribS{H}$ in \(\R^{d}\) and that 
\[{}
\omega{}
\vcentcolon = \DistribF{\mathcal{F}^{-1} \DistribS{H}}
\]
satisfies \eqref{omegadecay}. 

As before, let us decompose $H$ as
\[
H_1 \vcentcolon= H\varphi_1,
\quad H_2 \vcentcolon=H(1-\varphi_2)
\quad \text{and} \quad H_3\vcentcolon=H-H_1-H_2\,.
\]
We handle each of these functions separately. 
Since $H_3$ is smooth with compact support, its Fourier transform is a Schwartz function.
To deal with $H_2$\,, we set
\begin{equation*}
D(\xi)
\vcentcolon=H_2(\xi)-\frac{1}{c_{\alpha}|\xi|^{\gamma-\alpha}}(1-\varphi_2(\xi))
=\frac{1}{c_{\alpha}|\xi|^{\gamma-2\alpha}}\frac{\zeta_2(\xi)}{c_{\alpha}+\zeta_2(\xi)|\xi|^{\alpha}}(1-\varphi_2(\xi)).
\end{equation*}
Thanks to the fast decay of $\zeta_2(\xi)$ at infinity and the fact that $D(\xi)$ is zero in a neighborhood of the origin, we deduce that $\xi^{\nu}\partial^{\mu}D(\xi)\in L^1(\mathbb{R}^d)$ for any multi-indexes $\mu$ and $\nu$. 
Therefore, $\widecheck{D}$ is a Schwartz function and we can apply Lemma~\ref{fouriertransform} to deduce that
\begin{equation*}
[\mathcal{F}^{-1}H_2](x)
=\frac{\C}{|x|^{d + \alpha - \gamma}}\varphi+\eta_1\,,
\end{equation*}
where \(\eta_1\) is a Schwartz function.
Let us focus now on the term $H_1$. 
If $0<\beta<d$, then \(H_1\) can be written as
\[
H_1(\xi)=\dfrac{\varphi_1(\xi)}{|\xi|^{\gamma-\beta}(c_{\beta} + \zeta_1 (\xi) |\xi|^{\beta})}\,,
\]
while, if  $\beta<0$ and different from an even integer, we have that
\[
H_1(\xi)=\dfrac{\varphi_1(\xi)}{|\xi|^{\gamma}(c_{\beta}|\xi|^{-\beta} + \zeta_1 (\xi) )}.
\]
In both cases we can apply Lemma~\ref{Fcheck} to deduce that for every multi-index \(\nu\),
\begin{equation}\label{Hfourier}
   |\partial^{\nu}\widecheck{H}_1(x)|
\le\frac{\C}{(1+|x|)^{d +\beta^+-\gamma +|\nu|}}. 
\end{equation}

Thanks to the linearity of the Fourier transform, the proof of the claim follows by summing up the information obtained for $H_1$, $H_2$ and $H_3$.
To prove that the function $\omega$ satisfies \eqref{convolutionequation}, notice that by construction it follows that
\[
H(\xi)\FourierF{\DistribS{g}}=\frac{c_{\gamma}}{|\xi|^{\gamma}}.
\]
Thanks to the properties of $H_1$, $H_2$, $H_3$, we have that
\[
T_{\gamma}=\mathcal{F}^{-1} Y_1+\mathcal{F}^{-1} Y_2+\mathcal{F}^{-1} Y_3\,,
\]
where $Y_i$ is defined as
\[
\langle Y_i, \eta \rangle 
\vcentcolon= \int_{\R^d}H_i\FourierF{\DistribS{g}}\eta \,.
\]
Recalling that $H_3$ is a Schwartz function we have that
\begin{equation}\label{qui}
\begin{split}
\langle \mathcal{F}^{-1} Y_3, {\eta} \rangle
& =\langle Y_3, \widecheck{\eta} \rangle
 =\int_{\R^d}\FourierF{\DistribS{g}}H_3 \widecheck{\eta} \\
& =\langle \Fourier{\DistribS{g}}, H_3 \widecheck{\eta} \rangle=\langle \DistribS{g}, \widehat{H_3} * \eta \rangle =\int_{\R^d}g \widehat{H_3} * \eta=\int_{\R^d} \widecheck{H_3}* g  \eta\,.
\end{split}
\end{equation}
To handle the first piece, notice that
\[
\langle \mathcal{F}^{-1} Y_1, {\eta} \rangle=\langle Y_1, \widecheck{\eta} \rangle=\int_{\R^d}\FourierF{\DistribS{g}}H_1 \widecheck{\eta} =\lim_{\epsilon\to 0} \int_{\R^d}\FourierF{\DistribS{g}} (H_1\widecheck{\eta})*\rho_{\epsilon}\,,
\]
where the limit is justified by the fact that convolution in the integral converges in $L^q(\mathbb{R}^d)$ for all $q<\frac{d}{\gamma-\beta^+}$, that $\FourierF{\DistribS{g}}\in L^p\loc(\mathbb{R}^n)$ for any $p<\frac{d}{\beta^+}$, that $q'<\frac{d}{\beta^+}$, and that $H_1$ has compact support.
Moreover, we have
\[
\int_{\R^d}\FourierF{\DistribS{g}} (H_1\widecheck{\eta})*\rho_{\epsilon}=\langle \DistribS{g}, [(H_1\widecheck{\eta})*\rho_{\epsilon}]^{\wedge} \rangle=\int_{\R^d}g \widehat{H_1}*\eta \, \widehat{\rho_{\epsilon}}\,.
\]
Therefore, 
\begin{equation}\label{quo}
\langle \mathcal{F}^{-1} Y_1, {\eta} \rangle=\int_{\R^d}\widecheck{H_1}*g \eta.
\end{equation}
Finally, we have that 
\begin{equation}\label{qua}
\begin{split}
\langle \mathcal{F}^{-1} Y_2, {\eta} \rangle & = \int_{\R^d} \FourierF{\DistribS{g}} H_2\widecheck{\eta} = \langle \DistribS{g}, (H_2 \widecheck{\eta})^{\wedge}  \rangle\\
& =\int_{\R^d} g (H_2 \widecheck{\eta})^{\wedge}
= \int_{\R^d} g \FourierF{H_2}*\eta=\int_{\R^d} [\mathcal{F}^{-1}H_2]* g \eta \,,
\end{split}
\end{equation}
where we have used that $H_2\widecheck{\eta}$ belongs to the Schwartz class, that $(H_2 \widecheck{\eta})^{\wedge}=\FourierF{H_2}*\eta$ and $\FourierF{H_2}(-\xi)=[\mathcal{F}^{-1}(H_2)](\xi)$. 
Putting together \eqref{qui}, \eqref{quo} and \eqref{qua}, we conclude that the previously defined $\omega$ satisfies \eqref{convolutionequation}. 

Let us address now the choice $b=0$ in the definition of $g$. Using the same notation as in the proof of Proposition~\ref{propositionFourierNonHomogeneous}, we see that $g = g_1$, where $g_1$ has been defined in \eqref{gdecomposition}. 
Therefore, recalling \eqref{helphelp}, it follows that
\[
\FourierF{\DistribS{g}}=\FourierF{\DistribS{g_{1}}}
= (1 - \varphi_{1}) \frac{c_{\alpha}}{\abs{\xi}^{\alpha}} + \zeta_{1}\,.
\]
for a suitable radial Schwartz function $\zeta_{1}$. Now we proceed as before, defining $H$ as in \eqref{Hdefinition} and decomposing it in $H_1$, $H_2$, and $H_3$. Recalling that $\FourierF{\DistribS{g}}>0$, it follows that 
\[
H_1(\xi)=\dfrac{\varphi_1(\xi)}{|\xi|^{\gamma} \zeta_1 (\xi) }=\dfrac{\tilde \varphi(\xi)}{|\xi|^{\gamma} }\,,
\]
where $\tilde \varphi=\varphi_1/\zeta_1$ is smooth, radial, and with compact support. Therefore \eqref{Hfourier} holds true and the rest of the proof follows as for $b>0$ and $\beta<0$ different from a negative even number.

Finally let us go back to the case $\beta=-2\ell$ for some $\ell=1,2,\ldots$
Recalling \eqref{eq921}, the expression for $\FourierF{\DistribS{g}}(\xi)$ becomes
\[
\FourierF{\DistribS{g}}(\xi)
= 
\begin{cases}
(-A_k\log{|\xi|}+\lambda_k)|\xi|^{k}
+\eta(\xi) 
& \text{for \(|\xi|\le 1\),}\\
c_{\alpha}/|\xi|^{\alpha} + \eta(\xi) 
& \text{for \(|\xi|\ge 2\).}
 \end{cases}
\]
The strategy of the proof is the same as in the previous case, we keep the same notation, and only highlight the differences. In particular we have that $H_1(\xi)$ modifies as follows
\[
H_1(\xi)=\dfrac{\varphi_1(\xi)}{ |\xi|^{\gamma}}\dfrac{1}{(-A_k\log{|\xi|}+\lambda_k)|\xi|^{k}+\eta(\xi)}\,.
\]
We further decompose $H_1$ as follows
\[
H_1(\xi)=S(\xi)+B(\xi),
\]
where
\begin{equation*}
S(\xi) \vcentcolon=\frac{1}{ \eta(\xi)|\xi|^{\gamma}}\varphi_1(\xi)
\end{equation*}
and
\begin{equation*}
B(\xi)=-\frac{(-A_{k}\log{|\xi|}+\lambda_k)|\xi|^{k-\gamma}}{\eta(\xi)\big[\eta(\xi) + (-A_{k}\log{|\xi|}+\lambda_k)|\xi|^{k}\big]}\varphi_1(\xi).
\end{equation*}
Using Lemma~\ref{helpinglemma}, we deduce that
\[
\abs{\partial^{\nu}\widecheck{S}(x)}
\le \frac{\C}{(1+|x|)^{d-\gamma+|\nu|}}
 \quad \mbox{in  } \mathbb{R}^d.
\]
To deal with $\widecheck{B}$, notice that, for any multi-indices $\mu,\nu$, it follows that
\[
|\xi^{\nu}\partial^{\mu}B(\xi)|\le \C|\log{|\xi|}| \, |\xi|^{k-\gamma+|\nu|-|\mu|} \quad \mbox{for } |\xi|\le1/2.
\]
Together with the fact that $B$ has compact support, this implies that, for every nonnegative integer \(m<k-\gamma+|\nu|+d\),
\[
|\partial^{\nu}\widecheck{B}(x)|\le \frac{\C}{|x|^{m}}.
\]
Choosing $m=d- \lfloor \gamma \rfloor +|\nu|$ and using the linearity of the Fourier transform,
we conclude that again that, for any multi-index $\nu$,
\[
\abs{\partial^{\nu}\widecheck{H}_1(x)}
\le \frac{\C}{(1+|x|)^{d-\gamma+|\nu|}}
 \quad \mbox{in  } \mathbb{R}^d.
\]
The remaining of the proof follows along the same lines.
\end{proof}

\begin{proof}[Proof of Theorem~\ref{introrepfor}] 
Consider at first the case \eqref{eqGLocal} and let us apply Proposition~\ref{propositionExistenceVFirst} with $\gamma=2$, $\alpha=1-s$, $a=1$ and $b=0$. We deduce that there exists $\omega\in C^{\infty}(\R^d\setminus\{0\})$ such that $\omega*g(x)=1/|x|^{d-2}$. Estimate \eqref{omegadecayb=0} implies that the vector valued function $V=-\frac1{(N-2)\sigma_d}\nabla \omega$ belongs to $L^1+L^{d}$. Moreover, a straightforward computation shows that
\[
V*g(x)=\frac{1}{\sigma_d}\frac{x}{|x|^d}\,.
\]
Therefore, Proposition~\ref{corollaryfromVtorepresentationfomula} implies the desired result. To deal with the remaining cases \eqref{eqGIntermediate} and \eqref{eqGTwo}, we have just to apply Proposition~\ref{propositionExistenceVFirst} with the appropriate values of the considered parameters, the rest of the proof being the same.    
\end{proof}

\resetconstant
\begin{proof}[Proof of Theorem~\ref{introsobemb1}]
 Theorem~\ref{introrepfor} assures us that
 \[
 u=V*\mathcal{G}u.
 \]
 Moreover, under the considered assumptions, Proposition~\ref{propositionExistenceVFirst} implies that $V=c\nabla \omega$ satisfies estimate \eqref{introV1}. Therefore, decomposing \(V\) as 
 \[
 V = V\varphi_1+V(1-\varphi_1) =\vcentcolon V_1+V_2 \,,
 \]
we obtain that
\[
\abs{V_1(z)} + \abs{z} \abs{\nabla V_1(z)} 
\le \frac{\C}{|z|^{d - s}} \quad \text{and} \quad \abs{V_2(z)} + \abs{z} \abs{\nabla V_2(z)} 
\le \frac{\C}{|z|^{d - 1}}\,.
\]
When $p=1$, Proposition~\ref{propositionFractionalSobolev} implies that
\[
\|V_1*\mathcal{G}u\|_{L^{\frac{d}{d-s}}}\le \C\|\mathcal{G}u\|_{L^1} 
\quad \text{and} \quad  
\|V_2*\mathcal{G}u\|_{L^{\frac{d}{d-1}}}\le \C\|\mathcal{G}u\|_{L^1}\,.
\]
Thanks to the linearity of the convolution, we obtain the desired result.

To deal with the case $1 < p < d/s$, it is enough to notice that the estimates
\[
\|V_1*\mathcal{G}u\|_{L^{\frac{pd}{d-sp}}}\le \C\|\mathcal{G}u\|_{L^p} 
\quad \text{and} \quad 
\|V_2*\mathcal{G}u\|_{L^{\frac{pd}{d-p}}}\le \C\|\mathcal{G}u\|_{L^p}\,,
\]
follow by classical results on convolution operators, see for instance \cite{O}*{Theorem, p.\,139}. 
\end{proof}

The proof of Theorem~\ref{introsobemb2} follow the same structure as the proof of Theorem~\ref{introsobemb1}, now applying \eqref{introV2} rather than \eqref{introV1} and shall be omitted.

\section{Nonlocal Sobolev inequalities in Lorentz spaces}

Before proving Theorems~\ref{introsobemb3} and~\ref{introsobemb4}, let us first recall for the convenience of the reader the definition of the Lorentz spaces that are involved.
We need to introduce the decreasing rearrangement of a measurable function and some related notation, see \cite{Grafakos,O,oklander,Ziemer} for details.

\begin{definition}
    The \emph{decreasing rearrangement} of a measurable function $v\colon \R^d\to \R$ is defined for every \(0 < y < \infty\) as
    \[
    v^*(y)=\inf\{\tau\ge0 \ : A(\tau)<y\},
    \]
    where $A(t) \vcentcolon= |\{|v|>t\}|$.
\end{definition}

Taking
    \[
    v^{**}(y)=\frac1y\int_0^yv^*(\tau)\dif\tau,
    \]
the measurable function $v$ belongs to the Lorentz space $L^{p,q}(\R^d)$ whenever
\[
\|v\|_{L^{p,q}(\mathbb{R}^d)} \vcentcolon= \left(\int_0^{\infty}\big( \tau^{\frac{1}{p}}v^{**}(\tau)\big)^q\frac{\dif\tau}{\tau}\right)^{\frac1q}<\infty
\quad \text{for $1 < p< \infty$ and $1 \le q < \infty$},
\]
 and
\[
\|v\|_{L^{p,\infty}(\mathbb{R}^d)} \vcentcolon= \sup_{\tau >0}{\tau^{\frac 1 p}v^{**} (\tau)} < \infty
\quad \text{for $1 \le p \le \infty$ and \(q = \infty\).}
\]

We begin with the following estimate:

\begin{proposition}
\label{propositionFractionalSobolevp>1}
Let \(1 < p < \infty\), let \(0 < s < 1\) with $s p<d$, and let \(0 < t \le 1\) with $t p<d$.{}
Take $F\in L^p(\R^d,\R^d)$ and a measurable function \(v\colon \R^d\to \R^d \) such that, for almost every \(z \in \R^d\),
\begin{equation}\label{assonv}
\abs{v(z)} 
\le 
\begin{cases}
C/|z|^{d - s} 
& \text{if \(|z| \le 1\),}\\
C/|z|^{d - t }
& \text{if \(|z| > 1\).}
\end{cases}
\end{equation}
Then, the convolution $h\vcentcolon= v*F$ belongs to $(L^{\frac{pd}{d-s p},\,p}+L^{\frac{pd}{d-t p},\,p})(\R^d)$ and 
\[
\int_0^1 \tau^{\frac{d-s p}{d}} h^{**}(\tau)\frac{\dif\tau}{\tau}+\int_1^{\infty}\tau^{\frac{d-t p}{d}} h^{**}(\tau) \frac{\dif\tau}{\tau}
\le C' \|F\|_{L^p(\mathbb{R}^d)}^p.
\]
\end{proposition}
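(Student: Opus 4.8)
The plan is to split the kernel $v$ at the unit sphere, decompose $h$ accordingly, and reduce everything to classical convolution inequalities on the Lorentz scale supplemented by two elementary endpoint bounds. Write $v=v_1+v_2$ with $v_1\vcentcolon=v\chi_{B_1}$ and $v_2\vcentcolon=v\chi_{\R^d\setminus B_1}$, and set $h_i\vcentcolon=v_i*F$, so that $h=h_1+h_2$. From \eqref{assonv} one reads off that $v_1\in L^{\frac{d}{d-s},\infty}(\R^d)\cap L^1(\R^d)$ --- the membership in $L^1$ requiring $s>0$, so that $\abs{z}^{-(d-s)}$ is integrable near the origin --- and that $v_2\in L^{\frac{d}{d-t},\infty}(\R^d)\cap L^{p'}(\R^d)$, the membership in $L^{p'}$ using precisely the hypothesis $tp<d$, equivalently $(d-t)p'>d$, which makes $\abs{z}^{-(d-t)}$ $p'$-integrable near infinity. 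In particular $h$ is well defined, since $v_1*F\in L^p$ by Young's inequality and $v_2*F\in L^\infty$ by Hölder's inequality.

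Next I would record four estimates. Put $q_1\vcentcolon=\tfrac{pd}{d-sp}$ and $q_2\vcentcolon=\tfrac{pd}{d-tp}$, so that $\tfrac{p}{q_1}=\tfrac{d-sp}{d}$ and $\tfrac{p}{q_2}=\tfrac{d-tp}{d}$. Applying O'Neil's convolution inequality in Lorentz spaces \cite{O}*{Theorem, p.\,139} with the weak-type bounds above and the identification $F\in L^p=L^{p,p}$ gives
\[
\norm{h_1}_{L^{q_1,p}}\le C\norm{v_1}_{L^{\frac{d}{d-s},\infty}}\norm{F}_{L^p}
\quad\text{and}\quad
\norm{h_2}_{L^{q_2,p}}\le C\norm{v_2}_{L^{\frac{d}{d-t},\infty}}\norm{F}_{L^p},
\]
where the admissibility conditions $1<\tfrac{d}{d-s},\tfrac{d}{d-t}<\infty$ and $\tfrac1{q_1},\tfrac1{q_2}\in(0,1)$ are exactly $s,t>0$ together with $sp,tp<d$ (using also $p>1$ to get $s,t<d$). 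This already yields $h=h_1+h_2\in(L^{q_1,p}+L^{q_2,p})(\R^d)$. In addition, Young's inequality with $v_1\in L^1$ gives $\norm{h_1}_{L^p}\le\norm{v_1}_{L^1}\norm{F}_{L^p}$, and Hölder's inequality with $v_2\in L^{p'}$ gives $\norm{h_2}_{L^\infty}\le\norm{v_2}_{L^{p'}}\norm{F}_{L^p}$.

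To obtain the quantitative estimate I would use that the maximal rearrangement $f\mapsto f^{**}$ is subadditive, so that $\bigl(h^{**}(\tau)\bigr)^p\le 2^{p-1}\bigl(\bigl(h_1^{**}(\tau)\bigr)^p+\bigl(h_2^{**}(\tau)\bigr)^p\bigr)$, and then control the four resulting integrals. The two ``diagonal'' ones are immediate, since $\int_0^1\tau^{\frac{d-sp}{d}}\bigl(h_1^{**}\bigr)^p\frac{\dif\tau}{\tau}\le\norm{h_1}_{L^{q_1,p}}^p$ and $\int_1^\infty\tau^{\frac{d-tp}{d}}\bigl(h_2^{**}\bigr)^p\frac{\dif\tau}{\tau}\le\norm{h_2}_{L^{q_2,p}}^p$, both bounded by $C\norm{F}_{L^p}^p$. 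For the ``off-diagonal'' ones: near $0$ I use $h_2^{**}(\tau)\le\norm{h_2}_{L^\infty}$, whence $\int_0^1\tau^{\frac{d-sp}{d}}\bigl(h_2^{**}\bigr)^p\frac{\dif\tau}{\tau}\le\norm{h_2}_{L^\infty}^p\int_0^1\tau^{\frac{d-sp}{d}}\frac{\dif\tau}{\tau}\le C\norm{F}_{L^p}^p$, the last integral being finite because $\tfrac{d-sp}{d}>0$; near infinity I use the elementary decay $h_1^{**}(\tau)\le\tau^{-1/p}\norm{h_1}_{L^p}$ that follows from Hölder's inequality in the definition of $h_1^{**}$, whence $\int_1^\infty\tau^{\frac{d-tp}{d}}\bigl(h_1^{**}\bigr)^p\frac{\dif\tau}{\tau}\le\norm{h_1}_{L^p}^p\int_1^\infty\tau^{\frac{d-tp}{d}-2}\dif\tau\le C\norm{F}_{L^p}^p$, the integral converging because $\tfrac{d-tp}{d}<1$. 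Summing the four contributions yields the asserted inequality, with a constant depending only on $C$, $d$, $s$, $t$ and $p$.

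The only genuinely delicate point --- the main obstacle --- is the treatment of the two off-diagonal pieces: one has to recognize that, beyond the Lorentz bounds coming from O'Neil's inequality, $h_1$ carries the extra integrability $h_1\in L^p$ (from $v_1\in L^1$) and $h_2$ the extra bound $h_2\in L^\infty$ (from $v_2\in L^{p'}$, which is exactly where $tp<d$ enters), and then to check that the exponent constraints $s,t>0$ and $sp,tp<d$ are precisely those making both O'Neil's inequality applicable and the two elementary one-variable integrals convergent. Everything else is routine bookkeeping.
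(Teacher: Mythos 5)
Your proof is correct, and it takes a genuinely different route from the paper's for the quantitative estimate. Both you and the authors begin by splitting $v=v\chi_{B_1}+v\chi_{B_1^c}=v_1+v_2$ and invoking O'Neil's Lorentz convolution theorem to secure the \emph{membership} $h\in L^{q_1,p}+L^{q_2,p}$ with $q_1=pd/(d-sp)$, $q_2=pd/(d-tp)$. For the \emph{inequality}, however, the paper does not exploit the split: it bounds the full rearrangement $v^{**}$ by the piecewise power function $\mathrm{v}(\tau)$, applies O'Neil's pointwise lemma $h^{**}(\tau)\le\int_\tau^\infty v^{**}(y)F^{**}(y)\dif y$, and then controls the far integral by Hardy's inequality and the near integral by an integration by parts combined with H\"older's and Young's inequalities. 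You instead keep the split, treat the two diagonal terms $\int_0^1\tau^{(d-sp)/d}(h_1^{**})^p\,\dif\tau/\tau$ and $\int_1^\infty\tau^{(d-tp)/d}(h_2^{**})^p\,\dif\tau/\tau$ directly via the Lorentz norms already obtained, and dispose of the two cross terms by the subadditivity of $h\mapsto h^{**}$ together with the elementary observations that $h_1\in L^p$ (Young's inequality, $v_1\in L^1$, needing $s>0$) and $h_2\in L^\infty$ (H\"older, $v_2\in L^{p'}$, needing $tp<d$), coupled with the crude bounds $h_2^{**}\le\|h_2\|_\infty$ and $h_1^{**}(\tau)\le\tau^{-1/p}\|h_1\|_p$. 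Your approach is more modular --- it black-boxes O'Neil's theorem and avoids Hardy's inequality and the integration-by-parts manipulation --- at the cost of verifying two extra endpoint integrabilities of the split kernel; the paper's approach, by working with the single rearrangement estimate, stays closer to the sharp one-variable analysis of $h^{**}$ itself. Both are legitimate and use the same exponent constraints, so this is a valid alternative proof. (One small remark: the displayed inequality in the statement of the proposition has $h^{**}(\tau)$ where the proof, like yours, shows it should read $h^{**}(\tau)^p$ --- this is just a typo in the source.)
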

\resetconstant
\begin{proof}
Writing $v=v\chi_{B_1}+v\chi_{B_1^c}=v_1+v_2$, we can use assumption \eqref{assonv} to deduce that
\[
|v_1(z)|\le c/|z|^{d-s}  \quad \mbox{and} \quad |v_2(z)|\le c /|z|^{d-t}.
\]
Therefore, the embedding of the Riesz potential between Lorentz spaces, see e.g.~\cite[Theorem, p.~139]{O}, ensures that $h \vcentcolon= v_1*F+v_2*F$ is well defined and belongs to $ L^{\frac{pd}{d-s p},\,p}+L^{\frac{pd}{d-t p},\,p}$.
Let us now set, with an abuse of notation, $v^{*}=(|v|)^*$ and $v^{**}=(|v|)^{**}$. 
A direct computation shows that
\begin{equation}
\label{report}
v^{**}(\tau)\le \mathrm{v}(\tau) \vcentcolon= 
\begin{cases}
 c \tau^{-1 + s/d} 
& \text{if \(\tau \le 1\),}\\
 c \tau^{-1 + t/d }
& \text{if \(\tau > 1\),}
\end{cases}
\end{equation}
for a suitable constant $c > 0$. 
Thanks to the estimate for convolution operators provided in \cite[Lemma~1.6]{O}, we deduce that
\begin{equation}
\label{estimateRearrangement}
h^{**}(\tau)\le \int_\tau^{\infty}v^{**}(y)F^{**}(y) \dif y.
\end{equation}
Therefore, 
\[
\begin{split}
\int_{1}^{\infty} \tau^{\frac{d-t p}{d}} h^{**}(\tau)^p\frac{\dif\tau}{\tau}&\le 
\int_{1}^{\infty} \tau^{-\frac{t p}{d}} \left(\int_{\tau}^{\infty}v^{**}(y)F^{**}(y)\dif y\right)^p\dif\tau\\ 
&\le  c^p\int_{1}^{\infty}\tau^{-\frac{t p}{d}}\left(\int_{\tau}^{\infty}y^{-1+\frac{t}{d}}F^{**}(y)\dif y\right)^p\dif\tau
\le \C \|F^{**}\|_{L^p(\mathbb{R})}^p,
\end{split}
\]
where the second inequality follows from the estimates on $v^{**}$ and the last one by Hardy's inequality where we use that $t<d/p$\,; see \cite[Eq.~(9.9.10), p.~246]{HLP}.
On the other hand, integration by parts leads us to
\[
\begin{split}
B & \vcentcolon=\int_0^{1} \tau^{-\frac{s p}{d}} \left( \int_{\tau}^{\infty}v^{**}(y)F^{**}(y)\dif y\right)^p\dif\tau \\ & = \Cl{cte-1157} \left(\int_{1}^{\infty}v^{**}(y)F^{**}(y)\dif y\right)^p
+ \C\int_0^{1} \tau^{1-\frac{s p}{d}} \left(\int_\tau^{\infty}v^{**}(y)F^{**}(y)\dif y\right)^{p-1}v^{**}(\tau)F^{**}(\tau) \dif\tau\\
& \le \Cr{cte-1157} \left(\int_{1}^{\infty}v^{**}(y)^{p'}\dif y\right)^{p-1}\int_{1}^{\infty}F^{**}(y)^p \dif y + \C \int_0^{1} \tau^{1-\frac{s p}{d} }v^{**}(\tau)^pF^{**}(\tau)^p \dif\tau+ \frac B2,
\end{split}
\]
where we have used H\"older and Young inequalities in the third line. Thanks to the assumptions on $v$, we conclude that
\[
\int_{0}^{1} \tau^{\frac{d-s p}{d}} h^{**}(\tau)^p\frac{\dif\tau}{\tau}\le B\le \C \|F^{**}\|_{L^p(\mathbb{R})}^p.
\]
Combining both estimates for $h^{**}$ and using the fact that the norms \(\|F^{**}\|_{L^p(\mathbb{R})}\) and \(\|F\|_{L^p(\mathbb{R}^d)}\) are comparable, we obtain the desired result. 
\end{proof}

We now deduce the following estimates involving the truncations \(G_k\) and \(T_k\):

\begin{proposition} \label{truncationinlorentz}
    Let \(p\), \(s\) and \(t\) be as in the statement of Proposition~\ref{propositionFractionalSobolevp>1}.
    For every $F\in L^p(\R^d,\R^d)$ and every measurable function \(v\colon \R^d\to \R^d \) that satisfies \eqref{assonv}, the convolution $h= v*F$ satisfies
\[
\|G_{\bar k}(h)\|_{L^{\frac{pd}{d-ps},p}(\mathbb{R}^d)}+\|T_{\bar k}(h)\|_{L^{\frac{pd}{d-pt},p}(\mathbb{R}^d)} \le C \norm{F}_{L^{p}(\R^d)},
\]
with $\bar k \vcentcolon= \norm{F}_{L^{p}(\R^d)}$.
\end{proposition}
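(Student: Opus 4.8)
The plan is to reduce the statement to Proposition~\ref{propositionFractionalSobolevp>1} by combining two standard facts about the decreasing rearrangement under the truncations $T_k$ and $G_k$ with a uniform bound on the size of the superlevel set $\{\abs{h} > \bar k\}$. Write $\bar k \vcentcolon= \norm{F}_{L^p(\R^d)}$ and, as in the statement, let $h = v*F$, which is well defined by Proposition~\ref{propositionFractionalSobolevp>1}. First I would record that, for $k \ge 0$, one has $(T_k(h))^{*} = \min\{h^{*}, k\}$ and $(G_k(h))^{*} = (h^{*} - k)_{+}$, so that
\[
(T_{\bar k}(h))^{**}(\tau) \le \min\{h^{**}(\tau),\, \bar k\}, \qquad (G_{\bar k}(h))^{**}(\tau) \le \min\Bigl\{ h^{**}(\tau),\, \tfrac{M}{\tau} \Bigr\} \quad \text{for all } \tau > 0,
\]
where $M \vcentcolon= \int_0^{y_0}(h^{*}(\sigma) - \bar k)\dif\sigma$ and $y_0 \vcentcolon= \abs{\{\abs{h} > \bar k\}}$; the bound by $M/\tau$ holds because $(h^{*} - \bar k)_+$ equals $h^{*} - \bar k \ge 0$ on $(0,y_0)$ and vanishes on $(y_0,\infty)$. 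Here and below $C$ denotes a constant that may change from line to line but never depends on $F$.

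Next I would extract from Proposition~\ref{propositionFractionalSobolevp>1}, whose conclusion reads
\[
\int_0^1 \tau^{\frac{d - sp}{d}} h^{**}(\tau)^{p} \frac{\dif\tau}{\tau} + \int_1^{\infty} \tau^{\frac{d - tp}{d}} h^{**}(\tau)^{p} \frac{\dif\tau}{\tau} \le C \bar k^{p},
\]
the three quantitative bounds on which the whole argument rests; this extraction is really the heart of the matter. Since $h^{**}$ is non-increasing, comparing it with its integral over $(\tfrac12, 1)$ gives $h^{**}(1) \le C\bar k$; comparing it with its integral over $(\tfrac{\tau}{2}, \tau)$ for $\tau \ge 2$ and using $tp < d$ gives $h^{*}(\tau) \le h^{**}(\tau) \le C\bar k\,\tau^{-\frac{d - tp}{pd}}$, which forces $h^{*}(\tau) \le \bar k$ as soon as $\tau$ exceeds a constant $C_0$ independent of $F$, hence $y_0 \le C_0$. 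Combining the two, $M \le \int_0^{y_0} h^{*}(\sigma)\dif\sigma \le C\bar k$ (using $y_0 \le C_0$ and $h^{**}(1) \le C\bar k$, according to whether $y_0 \lessgtr 1$). It is exactly at this point that the choice of truncation level $\bar k = \norm{F}_{L^p}$ enters, in accordance with the fact that no uniform truncation level can work.

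It then remains to perform two elementary splittings at $\tau = 1$. For the truncation $T_{\bar k}(h)$,
\[
\norm{T_{\bar k}(h)}_{L^{\frac{pd}{d - tp},p}(\R^d)}^{p} = \int_0^1 \tau^{\frac{d - tp}{d}} (T_{\bar k}(h))^{**}(\tau)^{p} \frac{\dif\tau}{\tau} + \int_1^{\infty} \tau^{\frac{d - tp}{d}} (T_{\bar k}(h))^{**}(\tau)^{p} \frac{\dif\tau}{\tau},
\]
and I would bound the first term by $\bar k^{p}\int_0^1 \tau^{\frac{d - tp}{d} - 1}\dif\tau$, finite precisely because $tp < d$, and the second term by $\int_1^{\infty}\tau^{\frac{d - tp}{d}}h^{**}(\tau)^{p}\frac{\dif\tau}{\tau} \le C\bar k^{p}$ via Proposition~\ref{propositionFractionalSobolevp>1}. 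For the truncation $G_{\bar k}(h)$, I would bound $\int_0^1 \tau^{\frac{d - sp}{d}}(G_{\bar k}(h))^{**}(\tau)^{p}\frac{\dif\tau}{\tau}$ by $\int_0^1 \tau^{\frac{d - sp}{d}}h^{**}(\tau)^{p}\frac{\dif\tau}{\tau} \le C\bar k^{p}$ using $(G_{\bar k}(h))^{**} \le h^{**}$, and the complementary integral over $(1,\infty)$ using $(G_{\bar k}(h))^{**}(\tau) \le M/\tau$ by
\[
M^{p}\int_1^{\infty}\tau^{\frac{d - sp}{d} - p - 1}\dif\tau \le C M^{p} \le C\bar k^{p},
\]
the integral converging because $p > 1$ makes $\frac{d - sp}{d} - p < 0$. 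Adding the two estimates and taking $p$-th roots yields the conclusion with $\bar k = \norm{F}_{L^p(\R^d)}$. The only non-routine ingredient is the uniform bound $y_0 \le C_0$ (and the ensuing $M \le C\bar k$), which is where I expect the main difficulty to lie.
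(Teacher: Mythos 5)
Your proof is correct and follows essentially the same strategy as the paper: decompose $h$ via the truncations $T_{\bar k}$ and $G_{\bar k}$ at the level $\bar k = \norm{F}_{L^p}$, split each Lorentz-norm integral at $\tau = 1$, feed the favorable pieces into Proposition~\ref{propositionFractionalSobolevp>1}, and handle the remaining pieces by exploiting that $y_0 = \abs{\{\abs{h} > \bar k\}}$ is bounded by a constant independent of $F$.

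The one place where you take a genuinely different route is in establishing this boundedness of $y_0$ (the paper's $A(\bar k)$), which you correctly identify as the crux. The paper obtains $A(\bar k) \le \mathrm{I}^{-1}(1)$ from the pointwise H\"older bound $h^{**}(\tau) \le \mathrm{I}(\tau)\norm{F}_{L^p}$ in \eqref{estuno} together with the invertibility of the explicit function $\mathrm{I}$; you instead extract the decay $h^{**}(\tau) \le C\bar k\,\tau^{-\frac{d-tp}{pd}}$ for $\tau \ge 2$ directly from the finiteness of the Lorentz integral in Proposition~\ref{propositionFractionalSobolevp>1}, using only the monotonicity of $h^{**}$. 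Both arguments are valid; yours treats Proposition~\ref{propositionFractionalSobolevp>1} as a black box and avoids reopening the H\"older computation, which makes the logical dependence on the earlier proposition cleaner. Similarly, for the piece of $G_{\bar k}(h)$ over $(1,\infty)$, you use the explicit decay $(G_{\bar k}(h))^{**}(\tau) \le M/\tau$ together with $M \le C\bar k$, whereas the paper uses that $(G_{\bar k}(h))^*$ is supported in $(0, A(\bar k))$ and cuts the integral off at $\max\{A(\bar k),1\}$; again both routes close the estimate, and your $M/\tau$ bound sidesteps a small ambiguity in the paper between the $h_1^*$- and $h_1^{**}$-based versions of the Lorentz norm (these are equivalent for $p>1$, but the $M/\tau$ decay is the sharper statement for $h_1^{**}$).

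Two small remarks. First, you read the conclusion of Proposition~\ref{propositionFractionalSobolevp>1} with integrand $h^{**}(\tau)^p$ rather than $h^{**}(\tau)$ as printed; this is the correct reading (the paper's proof makes clear that the $p$-th power is intended). Second, your two-case bound $M \le C\bar k$ is correct but slightly compressed: for $y_0 \le 1$ you use $M \le \int_0^1 h^* = h^{**}(1) \le C\bar k$, and for $1 < y_0 \le C_0$ you use $M \le y_0\, h^{**}(y_0) \le C_0\, h^{**}(1) \le C\bar k$. Both inequalities rely on the extracted bound $h^{**}(1) \le C\bar k$, so the argument is sound.
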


\resetconstant
\begin{proof}
Let us first notice that, using \eqref{estimateRearrangement},
\begin{equation}\label{estuno}
h^{**}(\tau)\le \int_{\tau}^{\infty}v^{**}(y)F^{**}(y)\dif y\le \left(\int_{\tau}^{\infty}\mathrm{v}(y)^{p'}\dif y\right)^{\frac1{p'}}\|F\|_{L^p}=\vcentcolon \mathrm{I}(\tau)\|F\|_{L^p}\,,
\end{equation}
where $\mathrm{v}$ is given in \eqref{report}.
By construction, the real function $\mathrm{I}\vcentcolon (0,\infty)\to(0,\infty)$ defined above is invertible.
Therefore,
\begin{equation}\label{estdue}
A(k)
=|\{|h|>k\}|
=|\{h^*>k\}|
\le \mathrm{I}^{-1}\bigl(k/\|F\|_{L^p}\bigr).
\end{equation}
 Set 
 \[
 \bar k=\|F\|_{L^p}, \quad h_1=G_{\bar k}(h) \quad \text{and} \quad h_2=T_{\bar k}(h).
 \]
 Since $h_1^*\le h^*\chi_{(0,A(\bar k))}$, it follows that
\[
\begin{split}
\|h_1\|_{L^{\frac{pd}{d-ps},p}}&\le \C \left(\int_0^{1}\tau^{\frac{d-s p}{d}} h^{**}(\tau)^p\frac{\dif\tau}{\tau}\right)^{\frac1p}+\C\left(\int_1^{\max\{A(\bar k),1\}}\tau^{\frac{d-s p}{d}} h^{**}(\tau)^p\frac{\dif\tau}{\tau}\right)^{\frac1p}\\
&\le \C \|F\|_{L^p}+h^{**}(1)(A(\bar k)-1)_+^{\frac1p} 
\le \C\|F\|_{L^p}\,,
\end{split}
\]
where we have used Proposition~\ref{propositionFractionalSobolevp>1} to estimate the first integral, and formula \eqref{estuno} with $\tau=1$ and \eqref{estdue} with $ k= \|F\|_{L^p(\mathbb{R}^d)}$ to estimate the second one.
To deal with $h_2$, notice that $h_2^*=T_{\bar k}(h^*)$. Therefore,
\begin{align*}
\|h_2\|_{L^{\frac{pd}{d-pt},p}}&\le  \left(\int_0^{1}\tau^{\frac{d-t p}{d}} \bar k^p\frac{\dif\tau}{\tau}\right)^{\frac1p}+\left(\int_{1}^{\infty}\tau^{\frac{d-t p}{d}} h^{**}(\tau)^p\frac{\dif\tau}{\tau}\right)^{\frac1p}\\
&\le \C \bar k +\left(\int_{1}^{\infty}\tau^{\frac{d-t p}{d}} h^{**}(\tau)^p\frac{\dif\tau}{\tau}\right)^{\frac1p}
\le \C\|F\|_{L^p}\,,
\end{align*}
where the last inequality follow by the choice of $\bar k$ and Proposition~\ref{propositionFractionalSobolevp>1}.
\end{proof}

\begin{proof}[Proof of Theorem~\ref{introsobemb3}] Under the considered assumptions, Proposition~\ref{propositionExistenceVFirst} implies that $V \vcentcolon=c \nabla \omega$ satisfies estimate \eqref{introV1} and that $u=V*\mathcal{G}u$ for all $u\in C^{\infty}_c(\mathbb{R}^d)$. 
Therefore, we apply Proposition~\ref{truncationinlorentz} to $h = u =V*\mathcal{G}u$, with $0 < s < 1$ and $t=1$, to get the desired result.
\end{proof}

The proof of Theorem~\ref{introsobemb4} follows along the lines of the proof of Theorem~\ref{introsobemb3}, by applying \eqref{introV2} rather than \eqref{introV1}.

\section*{Acknowledgments}

The authors thank D.~Spector for bringing to their attention the discussion of the Fourier transform in the book \cite{SKM} and the referee for helpful comments.
The first author (S.~Buccheri) received support from the Fonds de la Recherche scientifique (NFSR, grant CR 40006150), from the Austrian Science Fund (FWF, grant 10.55776/ESP9), and from the GNAMPA-INdAM Project 2025 ``Local and nonlocal equations with lower order terms" (CUP E5324001950001).
The second author (A.~C.~Ponce) was supported by the Fonds de la Recherche scientifique (F.R.S.--FNRS) under research grant J.0020.18.

\begin{bibdiv}
	
\begin{biblist}

\bib{Almgren-Lieb:1989}{article}{
   author={Almgren, Frederick J., Jr.},
   author={Lieb, Elliott H.},
   title={Symmetric decreasing rearrangement is sometimes continuous},
   journal={J. Amer. Math. Soc.},
   volume={2},
   date={1989},
  number={4},
   pages={683--773},
}

\bib{ACFS}{article}{
   author={Almi, Stefano},
   author={Caponi, Maicol},
   author={Friedrich, Manuel},
   author={Solombrino, Francesco},
   title={A fractional approach to strain-gradient plasticity: beyond
   core-radius of discrete dislocations},
   journal={Math. Ann.},
   volume={391},
   date={2025},
   number={3},
   pages={4063--4115},
}

\bib{BCMC}{article}{
   author={Bellido, José Carlos},
   author={Cueto, Javier},
   author={Mora-Corral, Carlos},
   title={Non-local gradients in bounded domains motivated by continuum
   mechanics: fundamental theorem of calculus and embeddings},
   journal={Adv. Nonlinear Anal.},
   volume={12},
   date={2023},
   number={1},
   pages={Paper No. 20220316, 48},
}

\bib{BMCS}{article}{
   author={Bellido, Jos{\'e} Carlos},
   author={Mora-Corral, Carlos},
   author={Sch{\"o}nberger, Hidde},
   title={Nonlocal gradients: Fundamental theorem of calculus, Poincar\'e inequalities, and embeddings},
   journal={J. Lond. Math. Soc. (2)},
   volume={112},
   date={2025},
   number={2},
   pages={Paper No. e70277},
}

\bib{BourgainBrezis-03}{article}{
   author={Bourgain, Jean},
   author={Brezis, Ha\"im},
   title={On the equation ${\rm div}\, Y=f$ and application to control of
   phases},
   journal={J. Amer. Math. Soc.},
   volume={16},
   date={2003},
   number={2},
   pages={393--426},
}

\bib{BourgainBrezis-07}{article}{
   author={Bourgain, Jean},
   author={Brezis, Ha\"im},
   title={New estimates for elliptic equations and Hodge type systems},
   journal={J. Eur. Math. Soc. (JEMS)},
   volume={9},
   date={2007},
   number={2},
   pages={277--315},
}

\bib{BBM-2001}{article}{
   author={Bourgain, Jean},
   author={Brezis, Haim},
   author={Mironescu, Petru},
   title={Another look at Sobolev spaces},
   conference={
      title={Optimal control and partial differential equations},
   },
   book={
      publisher={IOS, Amsterdam},
   },
   isbn={1-58603-096-5},
   date={2001},
   pages={439--455},
}

\bib{BBM-2002}{article}{
   author={Bourgain, Jean},
   author={Brezis, Haim},
   author={Mironescu, Petru},
   title={Limiting embedding theorems for $W^{s,p}$ when $s\uparrow1$ and
   applications},
   journal={J. Anal. Math.},
   volume={87},
   date={2002},
   pages={77--101},
}

\bib{BBM-2004}{article}{
   author={Bourgain, Jean},
   author={Brezis, Haim},
   author={Mironescu, Petru},
   title={$H^{1/2}$ maps with values into the circle: minimal connections,
   lifting, and the Ginzburg-Landau equation},
   journal={Publ. Math. Inst. Hautes \'Etudes Sci.},
   number={99},
   date={2004},
   pages={1--115},
}

\bib{Brezis}{article}{
   author={Brezis, Haim},
   title={How to recognize constant functions. A connection with Sobolev
   spaces},
   language={Russian},
   journal={Uspekhi Mat. Nauk},
   volume={57},
   date={2002},
   number={4 (346)},
   pages={59--74},
   translation={
      journal={Russian Math. Surveys},
      volume={57},
      date={2002},
      number={4},
      pages={693--708},
   },
}

\bib{BrezisMironescu}{book}{
   author={Brezis, Haim},
   author={Mironescu, Petru},
   title={Sobolev maps to the circle --- from the perspective of analysis,
   geometry, and topology},
   series={Progress in Nonlinear Differential Equations and their
   Applications},
   volume={96},
   publisher={Birkh\"auser/Springer},
   place={New York},
   date={2021},
}

\bib{BrezisMironescu:2023}{article}{
   author={Brezis, Haim},
   author={Mironescu, Petru},
   title={Non-local approximations of the gradient},
   journal={Confluentes Math.},
   fjournal={Confluentes Mathematici},
   volume={15},
   date={2023},
   pages={27--44},
}

\bib{BCCS}{article}{
   author={Bru\`e, Elia},
   author={Calzi, Mattia},
   author={Comi, Giovanni E.},
   author={Stefani, Giorgio},
   title={A distributional approach to fractional Sobolev spaces and
   fractional variation: asymptotics II},
   journal={C. R. Math. Acad. Sci. Paris},
   volume={360},
   date={2022},
   pages={589--626},
}

\bib{CS}{article}{
   author={Comi, Giovanni E.},
   author={Stefani, Giorgio},
   title={A distributional approach to fractional Sobolev spaces and
   fractional variation: existence of blow-up},
   journal={J. Funct. Anal.},
   volume={277},
   date={2019},
   number={10},
   pages={3373--3435},
}

\bib{DGLZ}{article}{
   author={Du, Qiang},
   author={Gunzburger, Max},
   author={Lehoucq, Richard B.},
   author={Zhou, Kun},
   title={A nonlocal vector calculus, nonlocal volume-constrained problems, and nonlocal balance laws},
   journal={Math. Models Methods Appl. Sci.},
   volume={23},
   date={2013},
   number={3},
   pages={493--540},
}

\bib{Frank-Seiringer:2008}{article}{
   author={Frank, Rupert L.},
   author={Seiringer, Robert},
   title={Non-linear ground state representations and sharp Hardy
   inequalities},
   journal={J. Funct. Anal.},
   volume={255},
   date={2008},
  number={12},
   pages={3407--3430},
}

\bib{Gilboa}{article}{
   author={Gilboa, Guy},
   author={Osher, Stanley},
   title={Nonlocal operators with applications to image processing},
   journal={Multiscale Model. Simul.},
   volume={7},
   date={2008},
   number={3},
   pages={1005--1028},
}

\bib{Grafakos}{book}{
   author={Grafakos, Loukas},
   title={Classical Fourier analysis},
   series={Graduate Texts in Mathematics},
   volume={249},
   edition={3},
   publisher={Springer, New York},
   date={2014},
}

\bib{HLP}{book}{
   author={Hardy, G. H.},
   author={Littlewood, J. E.},
   author={P\'olya, G.},
   title={Inequalities},
   series={Cambridge Mathematical Library},
   publisher={Cambridge University Press},
   place={Cambridge},
   date={1988},
}

\bib{Hormander}{book}{
   author={H\"ormander, Lars},
   title={The analysis of linear partial differential operators I. Distribution theory and Fourier analysis},
   series={Classics in Mathematics},
   publisher={Springer-Verlag, Berlin},
   date={2003},
}

\bib{H}{article}{
   author={Horv\'ath, J.},
   title={On some composition formulas},
   journal={Proc. Amer. Math. Soc.},
   volume={10},
   date={1959},
   pages={433--437},
}

\bib{Jaffe}{article}{
   author={Jaffe, Ethan Y.},
   title={Fourier transform of homogeneous radial distributions},
	note={Unpublished notes},
}

\bib{Kolyada-Lerner:2005}{article}{
   author={Kolyada, V. I.},
   author={Lerner, A. K.},
   title={On limiting embeddings of Besov spaces},
   journal={Studia Math.},
   volume={171},
   date={2005},
  number={1},
   pages={1--13},
}

\bib{MSh-2002}{article}{
   author={Maz'ya, Vladimir},
   author={Shaposhnikova, Tatyana},
   title={On the Bourgain, Brezis, and Mironescu theorem concerning limiting
   embeddings of fractional Sobolev spaces},
   journal={J. Funct. Anal.},
   volume={195},
   date={2002},
   number={2},
   pages={230--238},
   note={Erratum: J. Funct. Anal. 201 (2003), 298--300},
}

\bib{MMW}{article}{
   author={Meerschaert, Mark M.},
   author={Mortensen, Jeff},
   author={Wheatcraft, Stephen W.},
   title={Fractional vector calculus for fractional advectiondispersion},
   journal={Physica A},
   volume={367},
   date={2006},
   pages={181--190},
}

\bib{MS}{article}{
   author={Mengesha, Tadele},
   author={Spector, Daniel},
   title={Localization of nonlocal gradients in various topologies},
   journal={Calc. Var. Partial Differential Equations},
   volume={52},
   date={2015},
   number={1-2},
   pages={253--279},
}

\bib{oklander}{book}{
   author={Oklander, E. T.},
   title={Interpolaci\'on, espacios de Lorentz y teorema de Marcinkiewicz},
   series={Cursos y Semin\'arios de Mat. 20},
   publisher={Universidad de Buenos Aires},
   place={Buenos Aires},
   date={1965},
}

\bib{O}{article}{
   author={O'Neil, Richard},
   title={Convolution operators and $L(p,\,q)$ spaces},
   journal={Duke Math. J.},
   volume={30},
   date={1963},
   pages={129--142},
}

\bib{Ponce}{book}{
   author={Ponce, Augusto C.},
   title={Elliptic PDEs, measures and capacities},
   series={EMS Tracts in Mathematics},
   volume={23},
   publisher={European Mathematical Society (EMS)},
   place={Z\"urich},
   date={2016},
}

\bib{PonceSpector:2020}{article}{
   author={Ponce, Augusto C.},
   author={Spector, Daniel},
   title={A boxing inequality for the fractional perimeter},
   journal={Ann. Sc. Norm. Super. Pisa Cl. Sci. (5)},
   volume={20},
   date={2020},
   number={1},
   pages={107--141},
}

\bib{SKM}{book}{
   author={Samko, Stefan G.},
   author={Kilbas, Anatoly A.},
   author={Marichev, Oleg I.},
   title={Fractional integrals and derivatives. Theory and applications},
   publisher={Gordon and Breach Science Publishers},
   place={Yverdon},
   date={1993},
}

\bib{SSVS}{article}{
   author={Schikorra, Armin},
   author={Spector, Daniel},
   author={Van Schaftingen, Jean},
   title={An $L^1$-type estimate for Riesz potentials},
   journal={Rev. Mat. Iberoam.},
   volume={33},
   date={2017},
   number={1},
   pages={291--303},
}

\bib{SS}{article}{
   author={Shieh, Tien-Tsan},
   author={Spector, Daniel E.},
   title={On a new class of fractional partial differential equations},
   journal={Adv. Calc. Var.},
   volume={8},
   date={2015},
   number={4},
   pages={321--336},
}

\bib{SSbis}{article}{
   author={Shieh, Tien-Tsan},
   author={Spector, Daniel E.},
   title={On a new class of fractional partial differential equations II},
   journal={Adv. Calc. Var.},
   volume={11},
   date={2018},
   number={3},
   pages={289--307},
}

\bib{Sil}{article}{
   author={\v Silhav\'y, M.},
   title={Fractional vector analysis based on invariance requirements
   (critique of coordinate approaches)},
   journal={Contin. Mech. Thermodyn.},
   volume={32},
   date={2020},
   number={1},
   pages={207--228},
}

\bib{Spector}{article}{
   author={Spector, Daniel},
   title={An optimal Sobolev embedding for $L^1$},
   journal={J. Funct. Anal.},
   volume={279},
   date={2020},
   number={3},
   pages={108559},
}

\bib{VS}{article}{
   author={Van Schaftingen, Jean},
   title={Estimates for {$L^1$}-vector fields},
   journal={C. R. Math. Acad. Sci. Paris},
   volume={339},
   date={2004},
   number={3},
   pages={181--186},
}

\bib{Ziemer}{book}{
   author={Ziemer, William P.},
   title={Weakly differentiable functions},
   series={Graduate Texts in Mathematics},
   volume={120},
   publisher={Springer-Verlag},
   place={New York},
   date={1989},
}
\end{biblist}

\end{bibdiv}

\end{document}